\documentclass[twoside, 11pt]{article}
\usepackage[utf8]{inputenc}
\usepackage[T1]{fontenc}
\usepackage[english]{babel}
\usepackage{amsmath, amssymb, amsthm, amstext, amsfonts, a4}
\usepackage{dsfont}
\usepackage[svgnames]{xcolor}
\usepackage{graphics,graphicx}
\usepackage{caption}
\usepackage{subcaption}
\usepackage{enumitem}
\usepackage{comment}
\usepackage{tgpagella}
\usepackage[affil-it]{authblk}
\usepackage{empheq}

\usepackage{hyperref}
\hypersetup{%
colorlinks=true,
linkcolor=Navy,
citecolor=Brown,
urlcolor=Navy
}

\usepackage{geometry}
\geometry{hmargin=2cm,vmargin=2.5cm}

\numberwithin{equation}{section}

\setlength\parindent{0em}

\theoremstyle{plain}
	\newtheorem{theorem}{Theorem}[section] 
	\newtheorem{proposition}[theorem]{Proposition}       
	\newtheorem{lemma}[theorem]{Lemma}
	\newtheorem{corollary}[theorem]{Corollary}
        
\theoremstyle{definition}
	\newtheorem{definition}[theorem]{Definition}
	\newtheorem{remark}{Remark}[section]
    \newtheorem{example}{Example}[section]

\theoremstyle{remark}
    \newtheorem*{thx}{Acknowledgments}

\renewenvironment{proof}{\smallskip\noindent\emph{\textbf{Proof.}}%
  \hspace{1pt}}{\hspace{-5pt}{\nobreak\quad\nobreak\hfill\nobreak%
    $\square$\vspace{2pt}\par}\smallskip\goodbreak}

\newenvironment{proofof}[1]{\smallskip\noindent\emph{\textbf{Proof~of~#1.}}%
  \hspace{1pt}}{\hspace{-5pt}{\nobreak\quad\nobreak\hfill\nobreak%
    $\square$\vspace{2pt}\par}\smallskip\goodbreak}
%

\newcommand{\N}{\mathbb{N}} 
\newcommand{\Z}{\mathbb{Z}} 
\newcommand{\R}{\mathbb{R}} 
\newcommand{\ds}[1]{\displaystyle{#1}}
\newcommand{\limit}[2]{{\ \underset{#1 \to #2}{\longrightarrow} \ }}
\newcommand{\1}{\mathbf{1}} 
\renewcommand{\d}[1]{\mathinner{\mathrm{d}{#1}}} 
\newcommand{\p}{\partial} 
\newcommand{\eps}{\mathrm{\varepsilon}}
\newcommand{\sgn}{\mathop{\rm sgn}}
\newcommand{\abs}[1]{{\left|#1\right|}}
\newcommand{\norm}[1]{{\left\|#1\right\|}}
\newcommand{\open}[2]{\mathopen] #1, #2 \mathclose[}

\newcommand{\Czero}{\mathbf{C}} 
\newcommand{\Ck}[1]{\mathbf{C}^{#1}} 
\newcommand{\Cc}[1]{\mathbf{C}_\mathbf{c}^{#1}}
\newcommand{\Lip}{\mathbf{Lip}} 
\renewcommand{\L}[1]{\mathbf{L}^{#1}} 
\newcommand{\Lloc}[1]{\mathbf{L}_{\mathbf{loc}}^{#1}} 
\newcommand{\W}[2]{\mathbf{W}^{#1, #2}} 
\newcommand{\Wloc}[2]{\mathbf{W}_{\mathbf{loc}}^{#1, #2}}
\newcommand{\Hloc}[1]{\mathbf{H}_{\mathbf{loc}}^{#1}}
\newcommand{\BV}{\mathbf{BV}} 

\newcommand{\god}{\mathrm{god}}
\newcommand{\fint}{F_{\mathrm{int}}}

\newcommand{\cG}{\mathcal{G}}
\newcommand{\cR}{\mathcal{R}}
\newcommand{\cU}{\mathcal{U}}
\newcommand{\cV}{\mathcal{V}}
\newcommand{\cL}{\mathcal{L}}

\begin{document}

\title{\textbf{Convergence of a Finite Volume Scheme for \\ 
Compactly Heterogeneous Scalar Conservation Laws}}

\author{Abraham Sylla$^1$}

\date{}

\maketitle

\footnotetext[1]{\texttt{abraham.sylla@u-picardie.fr} \\
LAMFA CNRS UMR 7352, Université de Picardie Jules Verne \\
33 rue Saint-Leu, 80039 Amiens (France) \\
ORCID number: 0000-0003-1784-4878}

\thispagestyle{empty}

\begin{abstract}
    We build a finite volume scheme for the scalar conservation law 
    $\p_t u + \p_x (H(x, u)) = 0$ with initial condition $u_o \in \L{\infty}(\R, \R)$ 
    for a wide class of flux function $H$, convex with respect to the second variable. 
    The main idea for the construction of the scheme is to use the theory of 
    discontinuous flux. We prove that the resulting approximating sequence converges in 
    $\Lloc{p}(\open{0}{+\infty} \times \R, \R)$, for all 
    $p \in [1, +\infty \mathclose[$, to the entropy solution. 
\end{abstract}

\textbf{2020 AMS Subject Classification:} 65M08, 65M12, 35L65.

\textbf{Keywords:} Scalar Conservation Law, Compactly Space Dependent Flux, Discontinuous Flux, 
Finite Volume Scheme, Well-Balanced Scheme.




\section{Introduction}

Consider the following Cauchy problem for $x$-dependent scalar conservation law:
\begin{empheq}[left=\empheqlbrace]{align}
    \label{eq:CL}
        \p_t u(t, x) + \p_x \left(H(x, u(t, x)) \right) & = 0 
        \qquad (t, x) \in \open{0}{+\infty} \times \R \nonumber \\
        u(0, x) & = u_o(x) \qquad x \in \R.
    \tag{\textbf{CL}}
\end{empheq}

Equations of this type often occur, for instance, in the modeling of physical 
phenomena related to traffic flow \cite{Whitham2014, Mochon1987, HR1995}, porous 
media \cite{GR1992, Jaffre1995} or sedimentation problems \cite{Diehl1995, Diehl1996}.

It is known that solutions of \eqref{eq:CL} are discontinuous, regardless of the 
regularity of the data. They are to be understood in the entropic sense of 
\cite{Kruzhkov1970}. The following quantity often recurs below,
\begin{equation}
    \label{eq:EntropyFlux}
    \forall x, u, k \in \R, \quad \Phi(x, u, k) \coloneq 
    \sgn(u - k) \; (H(x, u) - H(x, k)).  
\end{equation}  

\begin{definition}
    \label{def:EntropySolution}
    Let $H \in \Ck{1}(\R^2, \R)$ and $u_o \in \L{\infty}(\R, \R)$. We say that 
    $u \in \L{\infty}(\open{0}{+\infty} \times \R, \R)$ is an entropy 
    solution to \eqref{eq:CL} if for all test functions 
    $\varphi \in \Cc{\infty}(\R^+\times \R, \R^+)$ and $k \in \R$, 
    \begin{empheq}{align}
        \label{eq:ES}
        \int_{0}^{+\infty} \int_{\R} 
        \abs{u-k} \p_t \varphi + \Phi(x, u, k) \p_x \varphi \; \d{x} \d{t} 
        - \int_{0}^{+\infty} \int_{\R} 
        \sgn(u -k) \p_x H(x, k) \varphi \; \d{x} \d{t} \nonumber \\
        + \int_{\R} \abs{u_o(x)-k} \varphi(0, x) \d{x} \geq 0.
    \end{empheq}
\end{definition}

In appearance, Definition \ref{def:EntropySolution} is weaker than the classical 
\cite[Definition 1]{Kruzhkov1970} since it does not require the existence of a 
strong trace at the initial time. It is in particular more manageable to limit 
process. Nevertheless, it can be shown that if $H \in \Ck{3}(\R^2, \R)$, then 
Definition \ref{def:EntropySolution} ensures that each entropy solution admits a 
representative belonging to $\Czero(\R^+, \Lloc{1}(\R, \R))$, 
see \cite[Theorem 2.6]{CPS2023}.  
\bigskip 

In \cite{Kruzhkov1970}, the author proved existence, uniqueness and stability with  
respect to the initial datum for \eqref{eq:CL} in the framework of entropy solutions 
under, among others, the growth assumptions
\begin{equation}
    \label{eq:GrowthKruzhkov}
    \p_u H \in \L{\infty}(\R^2, \R) \quad \text{and} \quad 
    \sup_{(x, u) \in \R^2} \left(-\p_{x u}^2 H(x, u) \right) < +\infty.
\end{equation}

These assumptions highlight the fact that the space dependency of $H$ 
in \eqref{eq:CL} was treated as a source term. 
In his paper, Kruzhkov proved existence using the vanishing viscosity method, that is, 
through a parabolic regularization of \eqref{eq:CL}. The following example motivates 
the necessity to relax \eqref{eq:GrowthKruzhkov} in the 1D scalar case.

\begin{example}
    \label{ex:HeterogeneousLWR}
    Fix positive constants $X, V_1, V_2, R_1, R_2$, and let $\cV, \rho \in \Ck{3}(\R, \open{0}{+\infty})$ be such that $\cV(x) = V_1$, resp. $\rho(x) = R_1$, for $x < -X$ and $\cV(x) = V_2$, resp. $\rho(x) = R_2$, for $x > X$. Define 
    \[
        H(x, u) \coloneq \cV(x) \; u \left(1 - \frac{u}{\rho(x)} \right).
    \] 

    Then with this flow, \eqref{eq:CL} is the so-called “LWR” (Lightill-Whitham, 
    Richards) model \cite{LW1955, Richards1956} for a flow of vehicles described by 
    their density $u$ along a rectilinear road with maximal speed, resp. density, 
    smoothly varying from $V_1$ to $V_2$, resp. from $R_1$ to $R_2$. Clearly, $H$ does 
    not satisfy \eqref{eq:GrowthKruzhkov}. Let us signal however, that since $0$ and $\rho$ are stationary solutions to the equation, this example could treated in Kruzhkov's framework for nonnegative initial data bounded from above by $\rho$.
\end{example}

Recently, the authors of \cite{CPS2023} developed an alternative framework to 
tackle \eqref{eq:CL}, one not requiring the growth assumptions 
\eqref{eq:GrowthKruzhkov} and inspired by Example \ref{ex:HeterogeneousLWR}:
\begin{align}
    \label{eq:smoothness}
    \mathbf{Smoothness:} & \quad H \in \Ck{3}(\R^2, \R). \tag{\textbf{C3}} \\[6pt]
    \label{eq:LocalizedSpace}
    \mathbf{Compact~Heterogeneity:}
    & \quad \exists X > 0, \; \forall (x, u) \in \R^2, \quad 
    \abs{x} \geq X \implies \p_x H(x, u) = 0. \tag{\textbf{CH}} \\[5pt]
    \label{eq:convexity}
    \mathbf{Strong~Convexity:} & \quad \forall x \in \R, \; 
    u \mapsto \p_u H(x, u) \; \text{is an increasing} \tag{\textbf{CVX}} \\[5pt] 
    & \quad \text{$\Ck{1}$-diffeomorphism of $\R$ onto itself.} \nonumber
\end{align}

Assumption \eqref{eq:convexity} ensures that for all $x \in \R$, the mapping 
$u \mapsto H(x, u)$ is strictly convex. Naturally, it can be replaced by the strong 
concavity assumption: for all $x \in \R$, $u \mapsto \p_u H(x, u)$ is a decreasing 
$\Ck{1}$-diffeomorphism of $\R$ onto itself.

The condition \eqref{eq:LocalizedSpace} expresses the compact spatial heterogeneity of 
$H$ and is not, apparently, common in the context of conservation laws. We expect that 
it might be relaxed.

Like we previously mentioned, Assumptions 
\eqref{eq:smoothness}--\eqref{eq:LocalizedSpace}--\eqref{eq:convexity} comprise flows 
that do not fit in the classical Kruzhkov framework and that are relevant from the 
modeling point of view, for instance, the flow of Example \ref{ex:HeterogeneousLWR} 
(in the concave case). Other examples are given by flows of the type
\[
    H(x, u) \coloneq \theta(x) h\bigl(u-\ell(x) \bigr) + g(x),
\]

for suitable assumptions on the functions $\theta, h, \ell$ and $g$. More precisely, $\theta, \ell, g \in \Ck{3}(\R, \R)$ have derivatives with compact support, $\theta > 0$ and $h \in \Ck{3}(\R, \R)$ is such that $h'$ is a $\Ck{1}$-diffeomorphism from $\R$ onto itself. \textit{A priori}, those flows cannot be treated with a truncation argument.

On the other hand, Kruzhkov results apply to general balance 
laws in several space dimensions. Let us recall the following.

\begin{theorem}{\cite[Theorem 2.6 and Theorem 2.18]{CPS2023}}
    \label{th:WellPosedness}
    Assume that $H$ satisfies \eqref{eq:smoothness}--\eqref{eq:LocalizedSpace}--\eqref{eq:convexity}. Then for all 
    $u_o \in \L{\infty}(\R, \R)$, the Cauchy problem \eqref{eq:CL} admits a unique 
    entropy solution $u \in \L{\infty}(\open{0}{+\infty}\times \R,\R)$.

    Moreover, if $v_o \in \L{\infty}(\R, \R)$ and if $v$ is the associated entropy 
    solution, then there exists $L > 0$ such that for all $R > 0$ and $t > 0$,
    \[
        \begin{aligned}
            \int_{\abs{x} \leq R} \abs{u(t, x) - v(t, x)} \d{x} & 
            \leq \int_{\abs{x} \leq R+ Lt} \abs{u_o(x) - v_o(x)} \d{x} 
        \end{aligned}
    \]
\end{theorem}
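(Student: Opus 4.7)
My plan is to prove existence, uniqueness, and stability separately, exploiting \eqref{eq:LocalizedSpace} and \eqref{eq:convexity} to compensate for the absence of the Kruzhkov growth condition \eqref{eq:GrowthKruzhkov}.

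\textbf{Step 1: existence by flux truncation.} Since $\p_u H$ need not be bounded, I would first construct a family of smooth truncated fluxes $H_M \in \Ck{3}(\R^2, \R)$ coinciding with $H$ on $\R \times [-M, M]$, still satisfying \eqref{eq:LocalizedSpace}, and with $\p_u H_M$ globally bounded. Each truncated Cauchy problem then falls under the classical Kruzhkov theory and admits a unique entropy solution $u_M$. The task reduces to proving $\norm{u_M}_{\L{\infty}} \leq M$ for $M$ large enough depending only on $\norm{u_o}_{\L{\infty}}$ and $H$, so that $u_M$ automatically solves \eqref{eq:CL}.

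\textbf{Step 2: uniform $L^\infty$ bound via stationary barriers.} I would build a stationary super-solution $\overline{U}$ as follows. Outside $[-X, X]$, every constant is stationary by \eqref{eq:LocalizedSpace}, so set $\overline{U} \equiv M$ there. Inside $[-X, X]$, by \eqref{eq:convexity} each map $u \mapsto H(x, u)$ is strictly convex with a unique minimum, so the equation $H(x, \overline{U}(x)) = H(X, M)$ has exactly two solutions; select the upper one. For $M$ larger than a threshold depending on $\max_{\abs{x}\leq X}\min_u H(x, u)$, this upper branch exists on all of $[-X, X]$, is $\Ck{1}$ in $x$, and matches $M$ at $x = \pm X$, so $\overline{U}$ is a classical stationary solution and \emph{a fortiori} a Kruzhkov super-solution. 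A symmetric sub-solution $\underline{U}$ is built below. Kruzhkov's comparison principle for $H_M$ then traps $u_M$ between $\underline{U}$ and $\overline{U}$, giving the required bound uniformly in $M$ once $M$ is large enough.

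\textbf{Step 3: uniqueness and $L^1$-stability via doubling.} Given two solutions $u, v$ taking values in a common compact $[-K, K]$, I would apply Kruzhkov's doubling of variables to \eqref{eq:ES} with mutual parameters and pass to the limit in the doubling scale. This yields the Kato-type inequality
\[
    \p_t \abs{u - v} + \p_x \Phi(x, u, v) \leq \sgn(u - v) \bigl( \p_x H(x, v) - \p_x H(x, u) \bigr)
\]
in $\mathcal{D}'(\open{0}{+\infty} \times \R)$. The right-hand side is bounded pointwise by $\norm{\p_{xu}^2 H}_{\L{\infty}(\R \times [-K, K])} \abs{u - v}$ and is supported in $[-X, X]$ by \eqref{eq:LocalizedSpace}. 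Setting $L = \norm{\p_u H}_{\L{\infty}(\R \times [-K, K])}$ and testing against a smooth cutoff of the forward cone $\{\abs{x} \leq R + L(t - s)\}$, a Gronwall-type argument exploiting the compact support of the right-hand side yields the claimed localized contraction, and uniqueness follows immediately.

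\textbf{Main obstacle.} The delicate novel ingredient is the construction and global existence of the stationary barriers in Step 2. It rests crucially on \eqref{eq:convexity} to single out a well-defined smooth convex branch of the level sets of $H(x, \cdot)$, and on \eqref{eq:LocalizedSpace} to extend by constants outside $[-X, X]$ without introducing a flux jump. A failure of either hypothesis would require a substantially different strategy for the $L^\infty$ bound.
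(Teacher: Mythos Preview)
The paper does not prove this theorem at all: it is quoted verbatim from \cite{CPS2023}, and the surrounding text explains that the proof there proceeds by vanishing viscosity combined with the correspondence between \eqref{eq:CL} and the associated Hamilton--Jacobi equation. Your proposal is therefore a genuinely different route (truncation of the flux, explicit stationary barriers, Kruzhkov doubling), and it is worth noting that your Step~2 is exactly the continuous analogue of what the present paper does at the \emph{discrete} level in Lemmas~\ref{lmm:SteadyState}--\ref{lmm:SteadyStateBis} to obtain the $\L{\infty}$ bound for the scheme. That parallel is a strength of your approach.

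There is, however, a concrete error in Step~2. You claim that the upper branch of $H(x,\overline U(x)) = H(X,M)$ ``matches $M$ at $x=\pm X$''. This is false in general: at $x=-X$ it yields $H(-X,\overline U(-X)) = H(X,M)$, and nothing in \eqref{eq:LocalizedSpace} forces $H(-X,M)=H(X,M)$. The correct construction (mirroring Lemma~\ref{lmm:SteadyState}) is one-sided: fix $\overline U \equiv \overline M$ for $x\le -X$, propagate rightward along the upper branch of $H(x,\cdot)=H(-X,\overline M)$, and accept whatever constant value emerges for $x\ge X$. You then need a separate argument (the analogue of Lemma~\ref{lmm:SteadyStateBis}, using the Legendre transform or the coercivity \eqref{eq:coercivity}) to show that a suitable choice of $\overline M$ makes $\overline U \ge \norm{u_o}_{\L\infty}$ \emph{everywhere} and that $\overline U$ is itself bounded. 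As written, your barrier may dip below $\norm{u_o}_{\L\infty}$ inside $[-X,X]$ if $H(x,\cdot)$ is steeper there than $H(X,\cdot)$.

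A smaller point on Step~3: the Kato inequality you wrote carries a right-hand side bounded by $C\,\1_{[-X,X]}\abs{u-v}$, and a naive Gronwall on that gives an exponential factor, not the clean contraction stated in the theorem. In fact, for $\Ck{1}$ space dependence the doubling-of-variables argument makes the two source terms cancel exactly, yielding $\p_t\abs{u-v}+\p_x\Phi(x,u,v)\le 0$ in $\mathcal D'$; you should aim for that cancellation rather than invoke Gronwall.
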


The proof of existence in \cite{CPS2023}, like in \cite{Kruzhkov1970}, relies on the 
vanishing viscosity method, but there, the authors exploit the correspondence with the 
Hamilton-Jacobi equation (and its parabolic approximation), going back and forth 
between the two frameworks, and gathering information for both equations at each step. 

\begin{remark}
    To be precise, let us mention that in \cite{CPS2023}, the convexity assumption 
    \eqref{eq:convexity} is relaxed to a uniform coercivity assumption coupled
    to a genuine nonlinearity assumption:
    \begin{align}
        \label{eq:coercivity}
        \mathbf{Uniform~Coercivity:} & \quad 
        \forall h \in \R, \; \exists \; \cU_h \in \R, \; 
        \forall (x, u) \in \R^2, \nonumber \\ 
        & \quad \abs{H(x, u)} \leq h \implies \abs{u} \leq \cU_h. \tag{\textbf{UC}} \\[6pt]
        \label{eq:nonlinearity}
        \mathbf{Weak~Genuine~NonLinearity:}
        & \quad \text{for a.e.} \; x \in \R, \; \text{the set} \; 
        \{p \in \R \; : \; \p_{uu}^2 H(x, p) = 0 \} \nonumber \\ 
        & \quad \text{has empty interior.} \tag{\textbf{WGNL}}
    \end{align}

    The strong convexity implies \eqref{eq:nonlinearity}, while \eqref{eq:coercivity} 
    follows from the fact that $H$ admits a Nagumo function (see \cite[Theorem 6.1.2]{CannarsaSinestrariSBook}), meaning that exists a function $\phi \in \Czero(\R^+, \R)$ such that:
    \[
        \forall x, u \in \R, \; H(x, u) \geq \phi(\abs{u}) 
        \quad \text{and} \quad 
        \frac{\phi(r)}{r} \limit{r}{+\infty} +\infty.
    \] 

    For instance, 
    \[
        \forall r \in \R^+, \quad 
        \phi(r) \coloneq \inf_{x \in \R} \bigl( \min\{H(x, r), H(x, -r)\} \bigr)
    \]

    is such a function.

    The convexity was used in \cite{CPSInverseDesign} to characterize, 
    for \eqref{eq:CL}, the attainable set and the set of initial data evolving at a 
    prescribed time into a prescribed profile.
\end{remark}

To construct solutions of \eqref{eq:CL}, an alternative to the vanishing viscosity 
is to build and prove the convergence of finite volume schemes. 
The most recent results include the works of \cite{EGH1995, EGGH1998, Champier1999} 
(in the case $H(x, u) = \theta(x) f(u)$), \cite{CH1999} 
(convergence and error analysis), \cite{CHC2001} (with a source term) or 
\cite{Vovelle2002} (on a bounded domain). In all these works, the flux function enters 
the framework of Kruzhkov by satisfying either \eqref{eq:GrowthKruzhkov}, or the 
stronger requirement $\p_x H \equiv 0$. 

The aim of this paper is to build a finite volume scheme for \eqref{eq:CL} and prove 
its convergence to the entropy solution under the assumptions 
\eqref{eq:smoothness}--\eqref{eq:LocalizedSpace}--\eqref{eq:convexity}. As a byproduct, 
we provide an alternate 
existence result for the Cauchy problem \eqref{eq:CL}, one that does not rely on the 
vanishing viscosity method. However, let us stress that the main result of the paper is 
a convergence result and not an existence result. To the author's knowledge, no 
convergence result is available for flux functions that do not verify 
\eqref{eq:GrowthKruzhkov}. 

In the convergence analysis of any standard finite volume scheme for \eqref{eq:CL}, a 
difficulty to tackle is the obtaining of \textit{a priori} $\L{\infty}$ bounds for the 
approximating sequence. Indeed, because of the space dependency, constants are 
not steady states anymore, therefore the maximum principle breaks down. In 
\cite{EGH1995, EGGH1998, Champier1999, CH1999, CHC2001, Vovelle2002}, this issue is 
overcome either with the requirement $\p_x H \equiv 0$, ensuring that constants are 
steady states, or with Assumption \eqref{eq:GrowthKruzhkov}, which provides a time 
dependent $\L{\infty}$ bounds for the numerical solution. 
We choose a new approach, relying on the theory of 
discontinuous flux, see \cite{AG2003, AJG2004, AGM2005, AKR2011, Diehl1995, GR1992, 
KRT2003, KT2004, Mishra2005, Towers2000} and the references therein. 
For the construction of the scheme, the space dependency of $H$ is discretized, 
and the idea is to treat each edge of the mesh as an interface of a discontinuous flux 
problem. Outside the compact of space dependency, our scheme reduces to a standard 
Godunov method. The contribution of the discontinuous flux theory is that we can build 
by hand discrete steady states of the scheme, making it, in a way well-balanced.

The next section presents our main result.

\section{Description of the scheme and main result}
\label{sec:TheScheme}

In this section, we describe the construction of our scheme for \eqref{eq:CL}. The 
novelty lies in the technique used, where after discretizing the space dependency of 
$H$, at each edge of the mesh, we solve a Riemann problem for a conservation law 
with discontinuous flux.

Fix a spatial mesh size $\Delta x > 0$ and time step $\Delta t > 0$. For all 
$n \in \N$ and $j \in \Z$, set the notations 
\[
    t^n \coloneq n \Delta t, \quad x_j \coloneq j\Delta x, \; 
    x_{j+1/2} \coloneq (j+1/2)\Delta x, \; 
    I_j \coloneq \open{x_{j-1/2}}{x_{j+1/2}}.
\]

Discretize the initial datum and “the real line”:
\[
    \forall j \in \Z, \;  
    u_j^o \coloneq \frac{1}{\Delta x} \int_{I_j} u_o(x) \d{x}, \quad
    \text{for a.e. } x \in \R, \; 
    \xi_\Delta(x) \coloneq \sum_{j \in \Z} x_j \1_{I_j}(x).
\]

Fix $n \in \N$. Let us explain how, given $(u_j^n)_{j \in \Z}$, we determine  
$(u_j^{n+1})_{j \in \Z}$. Set $u^n \coloneq \sum_{j \in \Z} u_j^n \1_{I_j}$.
Let us call $\cU^n \in \Czero([t^n, t^{n+1}], \Lloc{1}(\R))$ the unique entropy solution to following Cauchy problem, set on $\open{t^n}{t^{n+1}} \times \R$:
\begin{empheq}[left=\empheqlbrace]{align}
    \label{eq:SchemeStepOne}
    \p_t \cU(t, x) + \p_x \bigl(H(\xi_\Delta(x), \cU(t, x)) \bigr) & = 0 \nonumber \\
    \cU(t^n, x) & = u^n(x).
\end{empheq}

We recall in Section \ref{ssec:DiscontinuousFlux} the meaning of solutions to 
conservation laws with discontinuous flux we use and also establish that 
\eqref{eq:SchemeStepOne} admits a unique solution, which justifies the definition of $\cU^n$. Notice that since the space dependency is localized in a compact subset of $\R$, outside this compact, \eqref{eq:SchemeStepOne} reduces to a standard homogeneous conservation law. It then makes sense to $u_j^{n+1}$ as 
\begin{equation}
    \label{eq:MF1}
    \forall j \in \Z, \quad u_j^{n+1} \coloneq 
    \frac{1}{\Delta x} \int_{I_j} \cU^n(t^{n+1}-, x) \d{x},
\end{equation}

where $\cU^n(t^{n+1}-) \in \Lloc{1}(\R)$ denotes the left-hand limit of $\cU^n$ at $t=t^{n+1}$.

For the numerical analysis, we define the approximate solution $u_\Delta$ the 
following way:
\[
    u_\Delta \coloneq \sum_{n=0}^{+\infty} \cU^n \1_{[t^n ,t^{n+1}[}. 
\]

We can now state our main result.

\begin{theorem}
    \label{th:main}
    Assume that $H$ satisfies \eqref{eq:smoothness}--\eqref{eq:LocalizedSpace}--\eqref{eq:convexity} and fix 
    $u_o \in \L{\infty}(\R, \R)$. There exist two constants $\underline{u}$, 
    $\overline{u}$ depending only on $H$ and $u_o$ such that under the CFL condition 
    \begin{equation}
        \label{eq:CFL}
        2 \lambda L \leq 1, \quad 
        \lambda \coloneq \frac{\Delta t}{\Delta x}, \;
        L\coloneq \sup_{\underset{\underline{u} \leq p \leq \overline{u}}{x \in \R}} 
        \abs{\p_u H(x, p)},
    \end{equation}

    the following points hold.

    \begin{enumerate}[label=\bf(M\arabic*)]
        \item \label{item:Stability} 
        The scheme is stable:
        \begin{equation}
            \label{eq:StabilityScheme}
            \text{for a.e.} \; (t, x) \in \open{0}{+\infty} \times \R, 
            \quad \underline{u} \leq u_\Delta(t, x) \leq \overline{u}.
        \end{equation}

        \item \label{item:Convergence} As $(\Delta x, \Delta t) \to (0, 0)$, 
        the sequence $(u_\Delta)_\Delta$ converges in $\Lloc{p}(\open{0}{+\infty} \times \R, \R)$ for all $p \in [1, +\infty \mathclose[$ and a.e. on 
        $\open{0}{+\infty} \times \R$ to the unique entropy solution of \eqref{eq:CL}.
    \end{enumerate}
\end{theorem}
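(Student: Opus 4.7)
The plan is to read the scheme through the discontinuous-flux theory recalled in Section~\ref{ssec:DiscontinuousFlux}: stability \ref{item:Stability} will follow from sandwiching $\cU^n$ between discrete stationary barriers and invoking the comparison principle for entropy solutions of the discontinuous-flux problem, while convergence \ref{item:Convergence} will come from discrete Kruzhkov inequalities transmitted from the slab-wise entropy solutions, a compactness argument based on the strong convexity \eqref{eq:convexity}, and the uniqueness part of Theorem~\ref{th:WellPosedness}.

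For \ref{item:Stability}, the strong convexity gives, for every $x \in \R$ and every level $c$ above $\min_u H(x, u)$, exactly two solutions $w^{\pm}(x, c)$ of $H(x, w) = c$; these are smooth in $x$ and, by \eqref{eq:LocalizedSpace}, constant outside $[-X, X]$. Choosing $c^{\pm}$ large enough that $w^-(x_j, c^-) \leq u_j^o \leq w^+(x_j, c^+)$ for every $j \in \Z$, the piecewise-constant profiles $\overline{w}_\Delta \coloneq \sum_j w^+(x_j, c^+) \1_{I_j}$ and $\underline{w}_\Delta \coloneq \sum_j w^-(x_j, c^-) \1_{I_j}$ should be Godunov-admissible stationary entropy solutions of \eqref{eq:SchemeStepOne}, because the branch selection under \eqref{eq:convexity} matches exactly the interface admissibility condition. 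The comparison principle then yields $\underline{w}_\Delta \leq \cU^n(t, \cdot) \leq \overline{w}_\Delta$ for all $t \in [t^n, t^{n+1}]$, and cell-averaging \eqref{eq:MF1} preserves these bounds, so $\underline{u} \coloneq \inf \underline{w}_\Delta$ and $\overline{u} \coloneq \sup \overline{w}_\Delta$ are the sought constants. The CFL condition \eqref{eq:CFL} is what guarantees that Riemann fans emanating from adjacent interfaces $x_{j \pm 1/2}$ do not interact during a single slab, which is what makes \eqref{eq:MF1} an explicit, conservative, three-point update.

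For \ref{item:Convergence}, since $\cU^n$ is by construction the entropy solution on the slab, testing its entropy inequality against smooth nonnegative $\varphi$ and combining with the Jensen-type inequality $|u_j^{n+1} - k| \leq \frac{1}{\Delta x} \int_{I_j} |\cU^n(t^{n+1}-, x) - k| \d{x}$ will yield a discrete Kruzhkov inequality in which the source $\p_x H(\xi_\Delta, k)$ appears as interface-localized jumps $[H(x_{j+1}, k) - H(x_j, k)] \delta_{x_{j+1/2}}$. Compactness will follow from the uniform $\L{\infty}$ bound of \ref{item:Stability} together with the strong convexity, either through an Oleinik one-sided Lipschitz estimate inherited from the exact slab-wise resolution or through the Lions--Perthame--Tadmor averaging framework, whose genuine nonlinearity hypothesis is implied by \eqref{eq:convexity}. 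This extracts a subsequence $u_{\Delta_k}$ converging strongly in $\Lloc{p}(\open{0}{+\infty} \times \R, \R)$ for every $p \in [1, +\infty \mathclose[$ and a.e. to some limit $u$. Since $\xi_\Delta(x) \to x$ uniformly on compacts and the discrete source tested against a smooth function converges to $\int \p_x H(x, k) \varphi \d{x}$, passing to the limit should deliver \eqref{eq:ES}, and uniqueness from Theorem~\ref{th:WellPosedness} will then force the whole family $(u_\Delta)_\Delta$ to converge.

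The main obstacle I anticipate is precisely this last limit transition: the interface-localized source must be shown to converge to the distributed source of \eqref{eq:ES} while preserving the \emph{inequality} sign, despite the discontinuity of $p \mapsto \sgn(p - k)$ and the fact that the jumps $H(x_{j+1}, k) - H(x_j, k)$ are paired with traces of $u_\Delta$ at the interfaces rather than with cell values. I expect to handle this by exploiting the Godunov germ structure at each $x_{j+1/2}$ to bound from above the defect between the discrete and continuous sources by a quantity vanishing with $\Delta x$ uniformly in $k$ on bounded sets, the strong convexity \eqref{eq:convexity} being crucial to control the trace jumps at each interface.
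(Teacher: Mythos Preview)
Your strategy for \ref{item:Stability} is essentially the paper's: build piecewise-constant stationary barriers from the two branches of $H(x_j,\cdot)=c$, check they are admissible for the germ at each interface, and invoke the comparison principle for the discontinuous-flux problem (this is Lemmas~\ref{lmm:SteadyState}--\ref{lmm:SteadyStateBis} together with \ref{item:StabilityDiscontinuous} of Theorem~\ref{th:WellPosednessDiscontinuous}).

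The gap is in \ref{item:Convergence}, at the compactness step. Neither mechanism you propose is likely to go through. An Oleinik one-sided bound cannot be ``inherited from the exact slab-wise resolution'': at every interface $x_{j+1/2}$ inside $[-X,X]$ the Riemann solution of \eqref{eq:SchemeStepOne} generically contains a stationary \emph{non-classical} shock (see the end of Section~\ref{ssec:DiscontinuousFlux}), which violates the Oleinik inequality; with $O(1/\Delta x)$ such interfaces no uniform one-sided control on $\p_x u_\Delta$ survives, and this is exactly why the paper states that global $\BV$ bounds are not expected in the discontinuous-flux context. The averaging-lemma route would require a kinetic formulation for the scheme with a defect measure bounded uniformly in $\Delta$, and you give no indication of how to obtain one for this Godunov-with-interfaces construction; this is not standard machinery. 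The paper instead uses compensated compactness: Lemmas~\ref{lmm:CompCompA}--\ref{lmm:CompCompC} show that for the two entropy pairs~\eqref{eq:CompCompEntropies} the dissipation lies in a compact of $\Hloc{-1}$, by estimating separately the contributions of the projection steps, the classical shocks, and the interface jumps, after which the genuine nonlinearity from \eqref{eq:convexity} yields strong $\Lloc{p}$ convergence via Lemma~\ref{lmm:CompComp}.

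The obstacle you flag as the main one---passing to the limit in the interface-localized source while preserving the inequality sign---is real but secondary. The paper handles it via the consistency estimate Lemma~\ref{lmm:ConsistencyInterfaceFlux}, so that the piecewise-linear interpolant of the values $\fint^{j+1/2}(k,k)$ converges uniformly to $H(\cdot,k)$ and its derivative converges weakly-$*$ to $\p_x H(\cdot,k)$; the $\sgn$ discontinuity is absorbed by Lemma~\ref{lmm:SignTechnical}. But none of this can begin until strong compactness of $(u_\Delta)_\Delta$ is secured, and that is where your plan currently lacks a working argument.
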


The main contribution of the paper is \ref{item:Stability}, which proof is detailed in 
Section \ref{ssec:SteadyStates}. Indeed, when building a finite volume scheme for 
a space dependent conservation law, the main difficulty to overcome in the convergence 
analysis is the obtaining of \textit{a priori} $\L{\infty}$ bounds for the 
approximating sequence. 
The input of the discontinuous flux theory is that we can build “by hand” discrete 
steady states of the scheme, making it, in a way, well-balanced.

The proof of \ref{item:Convergence} is then somewhat standard in the sense that if follows the classical steps of finite volume schemes analysis. All the details are presented in 
Section \ref{ssec:Convergence}. The technical detail in the convergence proof is the use of the compensated compactness method \cite{LuBook,SerreBook,Tartar1979,Tartar1982}, since global $\BV$ bounds are not expected in the context of discontinuous flux. The genuine nonlinearity of $H$ under Assumption \eqref{eq:convexity} is exploited.

\begin{remark}
    A byproduct of the analysis in that under the assumptions of Theorem \ref{th:main}, the sequence
    \[
        \rho_\Delta \coloneq \sum_{n=0}^{+\infty} \sum_{j \in \Z} 
        u_j^n \1_{[t^n, t^{n+1} \mathclose[ \times I_j}
    \]

    also converges towards the entropy solution of \eqref{eq:CL}, see Step 1 in Section \ref{ssec:Convergence}. This sequence is meaningful since it is the one actually implemented in a computer.
\end{remark}

\section{Proofs}

To fix the ideas, the proofs are written in the convex case of 
Assumption \eqref{eq:convexity}.

\subsection{Discontinuous Flux}
\label{ssec:DiscontinuousFlux}

We recall and adapt for our use results regarding conservation laws with discontinuous flux, borrowing elements from \cite{AKR2011}. For the purpose of this section, let us fix $f_l, f_r \in \Ck{1}(\R, \R)$ two convex functions satisfying \eqref{eq:convexity}. Consider:
\begin{equation}
    \label{eq:CLDiscontinuous}
    \p_t u + \p_x \left(F(x, u) \right) = 0, \quad
    F(x, u) \coloneq \left\{ 
        \begin{aligned}
            f_l(u) & \quad \text{if} \; x \leq 0 \\
            f_r(u) & \quad \text{if} \; x > 0.
        \end{aligned}
    \right.
\end{equation}

\begin{remark}
    This structure will be applied in the scheme at each interface with $f_l(u) = H(x_j, u)$ and $f_r(u) = H(x_{j+1}, u)$.    
\end{remark}

Since the works of \cite{AG2003, AJG2004,AGM2005,AKR2011, Diehl1995,GR1992,KRT2003,KT2004,Towers2000}, it is now known that additionally to the conservation of mass (Rankine-Hugoniot condition)
\[
    \text{for a.e.} \; t > 0, \quad f_l(u(t, 0-)) = f_r(u(t, 0+)),
\]

an entropy criterion must be imposed at the interface $\{x=0\}$ 
to select one solution. This choice is often guided by physical consideration. 
Among the most common criteria, we can cite: the minimal jump condition 
\cite{GR1992, KMRWellBalanced,KMRTriangular}, the vanishing viscosity criterion 
\cite{BGS2019}, or the flux maximization \cite{AGM2005}.

We call $\Phi_l$, resp. $\Phi_r$, the Kruzhkov entropy flux \cite{Kruzhkov1970} associated with $f_{l}$, resp. with $f_r$, meaning that
\[
    \forall a, b \in \R, \quad 
    \Phi_l(a, b) \coloneq \sgn(a-b) (f_l(a) - f_l(b)).
\]

In this section, define $\Phi = \Phi(x, u)$ as the Kruzhkov entropy flux associated with $F$:
\[
    \forall x \in \R, \; \forall a, b \in \R, \quad 
    \Phi(x, a, b) \coloneq \sgn(a-b) (F(x, a) - F(x, b)) 
    = \left\{ 
        \begin{aligned}
            \Phi_l(a, b) & \quad \text{if} \; x \leq 0 \\
            \Phi_r(a, b) & \quad \text{if} \; x > 0.
        \end{aligned}
    \right.
\]

For the study of \eqref{eq:CLDiscontinuous}, we follow \cite{AKR2011}, where the traces 
of the solution at the interface $\{x=0\}$ are explicitly treated. It will be useful to 
have a name for the critical points of $f_{l,r}$, say $\alpha_{l, r}$.
Notice that for all $y \in \open{\min f_r}{+\infty}$, the equation $f_r(u) = y$ admits exactly two solutions, $S_r^-(y) < \alpha_r < S_r^+(y)$. When $y=\min f_r$, then $S_r^-(y) = S_r^+(y) = \alpha_r$. This motivates the following definition.

\begin{definition}
    \label{def:Germ}
    Define $\cG \subset \R^2$ as the union of the following subsets:
    \begin{equation}
        \label{eq:Germ}
        \begin{aligned}
            \cG_1 & \coloneq \left\{ (k_l, k_r) \in \R^2 \; : \; k_l \geq \alpha_l, \; 
            k_r = S_r^+(f_l(k_l)) \right\} \\
            \cG_2 & \coloneq \left\{ (k_l, k_r) \in \R^2 \; : \; k_l \leq \alpha_l, \; 
            k_r = S_r^-(f_l(k_l)) \right\} \\
            \cG_3 & \coloneq \left\{ (k_l, k_r) \in \R^2 \; : \; k_l > \alpha_l, \; 
            k_r = S_r^-(f_l(k_l)) \right\}.
        \end{aligned}
    \end{equation} 
\end{definition}

We will show that $\cG$ is a germ. The term was introduced in \cite{AKR2011} and is used to describe an abstract subset having certain properties. The germ contains all the possible traces along $\{x=0\}$ of the solutions to \eqref{eq:CLDiscontinuous}. Notice that by construction, any couple in the germ satisfies the Rankine-Hugoniot condition. Conversely, some couples verifying the Rankine-Hugoniot condition have been excluded from the germ, more precisely the ones 
belonging to 
\[
    \left\{ (k_l, k_r) \in \R^2 \; : \; k_l < \alpha_l, \; k_r = S_r^+(f_l(k_l)) \right\}.
\]

The reason lies in the following proposition, see in particular \eqref{eq:MaximalGerm}.

\begin{proposition}
    \label{pp:GermL1DMax}
    $\cG$ defined in Definition \ref{def:Germ} is a maximal $\L{1}$-dissipative germ, 
    meaning that the following properties are satisfied.

    (i) For all $(u_l, u_r) \in \cG$, $f_l(u_l) = f_r(u_r)$.

    (ii) Dissipative inequality: for all $(u_l, u_r) \in \cG$ and $(k_l, k_r) \in \cG$, it holds:
    \begin{equation}
        \label{eq:DissipativeGerm}
        \Phi_l(u_l, k_l) - \Phi_r(u_r, k_r) \geq 0.
    \end{equation}
    
    (iii) Maximality condition: let $(u_l, u_r) \in \R^2$ such that $f_l(u_l) = f_r(u_r)$. Then 
    \begin{equation}
        \label{eq:MaximalGerm}
        \bigl(  \forall (k_l, k_r) \in \cG, \; 
        \Phi_l(u_l, k_l) - \Phi_r(u_r, k_r) \geq 0 \bigr) \implies (u_l, u_r) \in \cG.
    \end{equation}
\end{proposition}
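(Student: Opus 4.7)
The plan is to verify the three properties in order. Property (i) is immediate: by construction $f_r(S_r^{\pm}(y)) = y$, so every $(k_l, k_r) \in \cG_i$ satisfies $f_l(k_l) = f_r(k_r)$. For (ii), I would proceed by a case analysis on the nine combinations $(\cG_i, \cG_j)_{1 \leq i, j \leq 3}$, reduced to six by the symmetry $\Phi_l(a, b) = \Phi_l(b, a)$ and $\Phi_r(a, b) = \Phi_r(b, a)$. Setting $A := f_l(u_l) = f_r(u_r)$ and $B := f_l(k_l) = f_r(k_r)$, the strict monotonicity of $f_l$ and $f_r$ on each of the branches $(-\infty, \alpha_{l, r}]$, $[\alpha_{l, r}, +\infty)$ pins down the signs of $u_l - k_l$ and $u_r - k_r$, and reduces $\Phi_l - \Phi_r$ to an explicit function of $A$ and $B$. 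I expect $\Phi_l - \Phi_r = 0$ in the three ``matched branches'' cases ($\cG_1 \times \cG_1$, $\cG_2 \times \cG_2$, $\cG_1 \times \cG_2$), and an expression of the form $|A - B| \pm (A - B)$ or $2|A - B|$ in the three cases involving $\cG_3$, all manifestly nonnegative.

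For the maximality (iii), I would argue by contraposition. Inspecting Definition \ref{def:Germ}, a Rankine-Hugoniot couple $(u_l, u_r) \notin \cG$ must satisfy $u_l < \alpha_l$ and $u_r = S_r^+(f_l(u_l)) > \alpha_r$, both inequalities being strict (otherwise the couple would fall into $\cG_1 \cup \cG_2 \cup \cG_3$). In particular $f_l(u_l) > \max(\min f_l, \min f_r)$, so one can pick a flux level $B \in \open{\max(\min f_l, \min f_r)}{f_l(u_l)}$ and define $k_l > \alpha_l$, $k_r = S_r^+(B) > \alpha_r$ as the unique preimages of $B$ on the right-branches of $f_l$ and $f_r$. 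By construction $(k_l, k_r) \in \cG_1$; since $u_l < \alpha_l < k_l$ and $\alpha_r < k_r < u_r$, a direct computation yields
\[
    \Phi_l(u_l, k_l) - \Phi_r(u_r, k_r) = (B - f_l(u_l)) - (f_l(u_l) - B) = 2(B - f_l(u_l)) < 0,
\]
violating \eqref{eq:DissipativeGerm}.

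The main obstacle is the bookkeeping in (ii) when $\cG_3$ is involved, where the asymmetry between the right-branch of $f_l$ and the left-branch of $f_r$ makes the sign tracking less transparent. Part (iii) is conceptually more delicate because one must guess a suitable witness in $\cG$; however, once the forbidden configuration $u_l < \alpha_l$, $u_r > \alpha_r$ is identified, choosing a $\cG_1$-element with intermediate flux level is natural and the contradiction is short.
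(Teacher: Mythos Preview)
Your proposal is correct and follows essentially the same route as the paper: (i) is immediate, (ii) is a case-by-case check (the paper only writes out $\cG_1 \times \cG_3$ explicitly, but your organization via the flux levels $A,B$ is the same idea and your predicted outcomes are accurate), and (iii) is proved by exhibiting a witness in $\cG$ that violates the dissipation inequality for the forbidden configuration $u_l < \alpha_l$, $u_r = S_r^+(f_l(u_l))$.

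The only noteworthy difference is the choice of witness in (iii): the paper picks $(k_l,k_r)\in\cG_2$ with $k_l\in\open{u_l}{\alpha_l}$ and $k_r=S_r^-(f_l(k_l))$, whereas you pick $(k_l,k_r)\in\cG_1$ at an intermediate flux level $B$. Both lead to $\Phi_l-\Phi_r = 2(B-A)<0$ after the same one-line computation. Your version has the small advantage that it makes explicit why the interval of admissible $B$ is nonempty (you check $f_l(u_l)>\max(\min f_l,\min f_r)$, which is exactly the strictness of $u_l<\alpha_l$ and $u_r>\alpha_r$); the paper's choice is marginally shorter to write down but tacitly uses that $f_l(k_l)\geq \min f_r$ for $k_l$ close enough to $u_l$ so that $S_r^-$ is defined.
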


\begin{proof}
    \textit{(i)} Immediate from the construction.
    
    \textit{(ii)} Follows from a case by case study. Let us deal with one case. Assume for instance that $(u_l, u_r) \in \cG_1$ and $(k_l, k_r) \in \cG_3$. Recall that $u_r \geq \alpha_r \geq k_r$ and $u_l, k_l \geq \alpha_l$. If $u_l \geq k_l$, then $\Phi_l(u_l, k_l) - \Phi_r(u_r, k_r)= 0$. Otherwise, using the Rankine-Hugoniot condition,
    \[
        \Phi_l(u_l, k_l) - \Phi_r(u_r, k_r) = 2 (f_l(k_l) - f_l(u_l)) > 0, 
    \] 

    where we used the monotonicity of $f_l$. The proof for the other cases follows the same idea.
    
    \textit{(iii)} We prove it by way of contradiction: let $(u_l, u_r) \in \R^2$ such that $f_l(u_l) = f_r(u_r)$ and assume that 
    \[
        \forall (k_l, k_r) \in \cG, \; \Phi_l(u_l, k_l) - \Phi_r(u_r, k_r) \geq 0 
        \quad \text{and} \quad (u_l, u_r) \notin \cG.
    \]

    From the Rankine-Hugoniot condition, we deduce $u_r = S_r^+(f_l(u_l))$, with 
    $u_l < \alpha_l$. Take $k_r \coloneq S_r^-(f_l(k_l))$ with $k_l \in \open{u_l}{\alpha_l}$. 
    Clearly, $(k_l, k_r) \in \cG_2 \subset \cG$, therefore by assumption, $\Phi_l(u_l, k_l) - \Phi_r(u_r, k_r) \geq 0$. But the choice of $k_l$ raises the contradiction:
    \[
        \Phi_l(u_l, k_l) - \Phi_r(u_r, k_r)
        = (f_l(k_l)-f_l(u_l)) - (f_r(u_r) - f_r(k_r)) 
        = 2 (f_l(k_l)-f_l(u_l)) < 0.
    \]
\end{proof}

The theory of \cite{AKR2011} proposes an abstract framework for the study 
of \eqref{eq:CLDiscontinuous}. On the other hand, the authors of 
\cite{AGM2005, AJG2004} give a formula for the flux at the interface for Riemann 
problems of \eqref{eq:CLDiscontinuous}. In the following proposition, we link the two 
points of view. Let us adopt the notations:
\begin{equation}
    \label{eq:MinMax}
    \forall a, b \in \R, \quad a \wedge b \coloneq \min\{a, b\} 
    \quad \text{and} \quad a \vee b \coloneq \max\{a, b\}.
\end{equation}

\begin{proposition}
    \label{pp:FluxAJG}
    For all $(u_l, u_r) \in \R^2$, define the interface flux:
    \begin{equation}
        \label{eq:FluxAJG}
        \fint(u_l, u_r) \coloneq 
        \max\{f_l(u_l \vee \alpha_l), f_r(\alpha_r \wedge u_r)\},
    \end{equation}

    and the remainder term:
    \begin{equation}
        \label{eq:RemainderTerm}
        \cR(u_l, u_r) \coloneq 
        \abs{\fint(u_l, u_r) - f_l(u_l)} + \abs{\fint(u_l, u_r) - f_r(u_r)}.
    \end{equation}
    
    Then the following points hold.

    (i) For all $(u_l,u_r) \in \R^2$, $(u_l,u_r) \in \cG \iff \cR(u_l,u_r) = 0$.

    (ii) For all $(u_l, u_r) \in \R^2$ and for all $(k_l, k_r) \in \cG$, 
    $\Phi_r(u_r, k_r) - \Phi_l(u_l, k_l) \leq \cR(u_l, u_r)$. 
\end{proposition}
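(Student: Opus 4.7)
For part (i), the plan is to proceed by direct inspection in both directions. For the forward implication $(u_l,u_r)\in\cG \Rightarrow \cR(u_l,u_r)=0$, I would split into the three subsets $\cG_1$, $\cG_2$, $\cG_3$ and, using the definitions of $S_r^\pm$, evaluate the two arguments of the max in \eqref{eq:FluxAJG}. In each case, one argument reduces to $f_l(u_l)=f_r(u_r)$ while the other equals $\min f_l$ or $\min f_r$, which cannot exceed the first; hence $\fint(u_l,u_r)=f_l(u_l)=f_r(u_r)$ and $\cR=0$. For the converse, $\cR(u_l,u_r)=0$ is equivalent to $\fint(u_l,u_r)=f_l(u_l)=f_r(u_r)$, which already forces the Rankine-Hugoniot relation. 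The only RH pairs excluded from $\cG$ are those with $u_l<\alpha_l$ and $u_r=S_r^+(f_l(u_l))$, i.e.\ $u_r>\alpha_r$. For such a pair, $u_l\vee\alpha_l=\alpha_l$ and $u_r\wedge\alpha_r=\alpha_r$, so $\fint=\max\{\min f_l,\min f_r\}$, which is strictly less than $f_l(u_l)>\min f_l$ -- contradicting $\cR=0$. So $\cR=0$ implies $(u_l,u_r)\in\cG$.

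For part (ii), the key is to insert $\fint=\fint(u_l,u_r)$ into the difference and exploit that $(k_l,k_r)\in\cG$ implies $f_l(k_l)=f_r(k_r)=\fint(k_l,k_r)$ by (i). A direct algebraic manipulation yields the identity
\[
    \Phi_r(u_r,k_r)-\Phi_l(u_l,k_l) = \sgn(u_r-k_r)\bigl(f_r(u_r)-\fint\bigr) - \sgn(u_l-k_l)\bigl(f_l(u_l)-\fint\bigr) + \bigl[\sgn(u_r-k_r)-\sgn(u_l-k_l)\bigr]\bigl(\fint - f_l(k_l)\bigr).
\]
The sum of the first two terms is bounded in absolute value by $|f_r(u_r)-\fint|+|f_l(u_l)-\fint|=\cR(u_l,u_r)$. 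The remaining task, and the main technical obstacle, is to prove that the bracketed term is non-positive.

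To handle this, I would exploit the monotonicity of the interface flux: the map $u_l\mapsto f_l(u_l\vee\alpha_l)$ is non-decreasing and $u_r\mapsto f_r(u_r\wedge\alpha_r)$ is non-increasing (since $f_l$ is increasing on $[\alpha_l,+\infty\mathclose[$ and $f_r$ is decreasing on $\mathopen]-\infty,\alpha_r]$), so $\fint$ is non-decreasing in $u_l$ and non-increasing in $u_r$. Set $A\coloneq\fint(u_l,u_r)-f_l(k_l)$. I would discuss the sign of $A$. If $A>0$, then at least one of the two arguments of the max in $\fint$ strictly exceeds $f_l(k_l)=f_r(k_r)$; analyzing each sub-case and comparing against the position of $k_l$, $k_r$ relative to $\alpha_l$, $\alpha_r$ forces either $u_l>k_l$ or $u_r<k_r$, so $\sgn(u_l-k_l)\geq\sgn(u_r-k_r)$. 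If $A<0$, then both max-arguments are strictly smaller than $f_l(k_l)=f_r(k_r)$; here the trickiest sub-case is $k_l<\alpha_l$, where I would use that $(k_l,k_r)\in\cG_2$ forces $k_r\leq\alpha_r$ to deduce $u_r>k_r$ from $f_r(u_r\wedge\alpha_r)<f_r(k_r)$. Either way $\sgn(u_r-k_r)\geq\sgn(u_l-k_l)$, so the bracket has the right sign and (ii) follows.
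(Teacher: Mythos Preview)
Your argument is correct. Part~(i) matches the paper's proof almost exactly; the only spot to tighten is the converse, where you write that $\max\{\min f_l,\min f_r\}<f_l(u_l)$ because $f_l(u_l)>\min f_l$ --- you also need $f_l(u_l)>\min f_r$, which follows immediately from $u_r>\alpha_r$ together with the Rankine--Hugoniot relation $f_l(u_l)=f_r(u_r)>\min f_r$.

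For part~(ii) your route is genuinely different from the paper's. The paper proceeds by a direct four-way case split on the signs of $u_l-k_l$ and $u_r-k_r$, never introducing $\fint$ into the estimate; within Case~3 (and the symmetric Case~4) it further splits according to which component $\cG_1,\cG_2,\cG_3$ contains $(k_l,k_r)$ and uses the monotonicity of $f_l,f_r$ on half-lines together with $f_l(k_l)=f_r(k_r)$ to bound $q$ directly by $|f_l(u_l)-f_r(u_r)|$. Your approach instead inserts $\fint(u_l,u_r)$ as a pivot, obtains the clean identity
\[
\Phi_r-\Phi_l=\sgn(u_r-k_r)\bigl(f_r(u_r)-\fint\bigr)-\sgn(u_l-k_l)\bigl(f_l(u_l)-\fint\bigr)+\bigl[\sgn(u_r-k_r)-\sgn(u_l-k_l)\bigr]\bigl(\fint-f_l(k_l)\bigr),
\]
and isolates the case analysis to the sign of $A=\fint-f_l(k_l)$. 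What this buys you is a more structural explanation of why $\cR$ is the right remainder: the first two terms are trivially controlled by $\cR$ for \emph{any} choice of pivot, and the germ condition on $(k_l,k_r)$ is used only to kill the third term via the coordinatewise monotonicity of $\fint$. The paper's argument is shorter to write out but less transparent about the role of $\fint$; yours makes clearer the connection to the abstract framework of \cite{AKR2011}, at the cost of a slightly more delicate sign discussion (your handling of the sub-case $k_l<\alpha_l$, forcing $(k_l,k_r)\in\cG_2$ hence $k_r\le\alpha_r$, is exactly the right move there).
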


\begin{proof}
    The fact that we have an “explicit” description of $\cG$ facilitates the proof.

    \textit{(i)} If $(u_l, u_r) \in \cG_1 \cup \cG_2$, then $u_l - \alpha_l$ and 
    $u_r -\alpha_r$ have the same sign, therefore 
    \[
        \fint(u_l, u_r) = 
        \left\{ 
            \begin{aligned}
                f_l(u_l)  & \quad \text{if} \; u_l \geq \alpha_l \\
                f_r(u_r) & \quad \text{if} \; u_l < \alpha_l
            \end{aligned}
        \right.
        = f_l(u_l) = f_r(u_r).
    \]

    On the other hand, if $(u_l, u_r) \in \cG_3$, then 
    \[
        \fint(u_l, u_r) = \max\{f_l(u_l), f_r(u_r)\} = f_l(u_l) = f_r(u_r).
    \]

    This ensures that $\cR(u_l, u_r) = 0$.

    Conversely, assume that $\cR(u_l, u_r) = 0$. In particular, the 
    Rankine-Hugoniot holds. We deduce that 
    \[
        \cR(u_l, u_r) = 0 
        \implies (u_l, u_r) \in \cG \;\; \text{or} \;\; 
        u_r = S_r^+(f_l(u_l)), \; u_l < \alpha_l.
    \]

    But in the latter case, $\fint(u_l, u_r) = \max\{\min f_l, \min f_r \} 
    \neq f_l(u_l)$. Therefore, $(u_l,u_r) \in\cG$.
    \smallskip 

    \textit{(ii)} Set $q\coloneq \Phi_r(u_r, k_r) - \Phi_l(u_l, k_l)$. Keep in mind that $f_{l,r}$ is decreasing on $\mathopen]-\infty, \alpha_{l,r}]$ and increasing on $[\alpha_{l, r}, +\infty \mathclose[$. We proceed with a 
    case by case study.
    \smallskip 

    \textbf{Case 1:} $u_l \leq k_l$ and $u_r \leq k_r$. Then 
    $q = f_l(u_l) - f_r(u_r) \leq \cR(u_l, u_r)$.

    \textbf{Case 2:} $u_l > k_l$ and $u_r > k_r$. Similar to Case 1.

    \textbf{Case 3:} $u_l > k_l$ and $u_r \leq k_r$. Then 
    $q = (f_r(k_r) - f_r(u_r)) + (f_l(k_l) - f_l(u_l))$. We proceed by a case by case study, depending on which part of the germ $(k_l, k_r)$ belongs to. Notice that here, we have $\cR(u_l, u_r) = \abs{f_l(u_l) - f_r(u_r)}$.
    
    3.(a) $(k_l, k_r) \in \cG_1$. Since $u_l > k_l \geq \alpha_l$, we write, using the monotonicity and $f_l(k_l) = f_r(k_r)$:
    \[
        q \leq f_r(k_r) - f_r(u_r)
        = (f_l(u_l) - f_r(u_r)) + (f_l(k_l) - f_l(u_l)) 
        \leq f_l(u_l) - f_r(u_r) \leq \cR(u_l, u_r).
    \]

    3.(b) $(k_l, k_r) \in \cG_2$. This time, $u_r \leq k_r \leq \alpha_r$, therefore:
    \[
        q \leq f_l(k_l) - f_l(u_l) = (f_r(u_r) - f_l(u_l)) + (f_r(k_r) - f_r(u_r)) 
        \leq f_r(u_r) - f_l(u_l) \leq \cR(u_l, u_r).
    \] 

    3.(c) $(k_l, k_r) \in \cG_3$. Clearly, $q \leq 0$.
    
    \textbf{Case 4:} $u_l \leq k_l$ and $u_r > k_r$. Similar to Case 3, so the details are omitted.
\end{proof}

\begin{remark}
    \label{rk:ConstantRemainderTerm}
    Notice that for all $k \in \R$,
    \[
        \cR(k, k) = 
        \left\{ 
            \begin{aligned}
                \abs{\max\{\min f_l, \min f_r \}-f_l(k)} 
                + \abs{\max\{\min f_l, \min f_r \}-f_r(k)} 
                & \quad \text{if} \; \alpha_r \leq k < \alpha_l \\
                \abs{f_l(k) - f_r(k)} & \quad \text{otherwise.}
            \end{aligned}
        \right.
    \] 
\end{remark}

We are now in position to properly give the definition of solutions for 
\eqref{eq:CLDiscontinuous}. The first definition is a Kruzhkov type entropy inequality where the solution is tested not against constant data, but against piecewise constant data. The second definition relies on the treatment of the traces of the solution along $\{x=0\}$. More precisely, for $u \in \L{\infty}(\open{0}{+\infty} \times \R)$, the strong traces of $u$ along $\{x=0\}$, if they exist, are the functions $\gamma_{L}u, \gamma_{R}u \in \L{\infty}(\open{0}{+\infty}, \R)$ such that
\[
    \underset{r \to 0+}{\mathrm{ess \; lim}} \; \frac{1}{r} \int_{-r}^0 \abs{u(t, x) - \gamma_{L}u(t)} \d{x} = 0, \quad \underset{r \to 0+}{\mathrm{ess \; lim}} \; \frac{1}{r} \int_0^r \abs{u(t, x) - \gamma_{R}u(t)} \d{x} = 0.
\]

\begin{proposition}
    \label{pp:EquivalentDefinitions}
    Define $\cG$ as \eqref{eq:Germ} and $\cR$ as \eqref{eq:RemainderTerm}. 
    Let $u_o \in \L{\infty}(\R, \R)$ and $u \in \L{\infty}(\open{0}{+\infty} \times \R, \R)$. Then the statements $\mathbf{(A)}$ and $\mathbf{(B)}$ below are equivalent.

    $\mathbf{(A)}$ For all $\varphi \in \Cc{\infty}(\R^+ \times \R, \R^+)$ and for 
    all $(k_l, k_r) \in \R^2$, with $\kappa \coloneq k_l \1_{\open{-\infty}{0}} + k_r \1_{\open{0}{+\infty}}$, it holds:
    \begin{empheq}{align}
    \label{eq:DiscontinuousEI1}
    \int_0^{+\infty} \int_\R 
    \abs{u - \kappa(x)} \p_t \varphi + \Phi(x, u, \kappa(x)) \p_x \varphi 
    \; \d{x} \d{t} 
	+ \int_\R \abs{u_o(x) - \kappa(x)} \varphi(0, x) \d{x} \nonumber \\ 
	+ \int_0^{+\infty} \cR(k_l, k_r) \varphi(t, 0) \d{t} \geq 0.
    \end{empheq}

    $\mathbf{(B)}$ The two following points are verified.

    (i) For all $\varphi \in \Cc{\infty}(\R^+ \times \R \backslash \{0\}, \R^+)$ 
    and for all $(k_l, k_r) \in \R^2$, with $\kappa \coloneq k_l \1_{\open{-\infty}{0}} + k_r \1_{\open{0}{+\infty}}$, it holds:
    \begin{empheq}{align}
    \label{eq:DiscontinuousEI2}
    \int_0^{+\infty} \int_\R 
    \abs{u - \kappa(x)} \p_t \varphi + \Phi(x, u, \kappa(x))  \p_x \varphi \; \d{x} \d{t} 
    + \int_\R \abs{u_o(x) - \kappa(x)} \varphi(0, x) \d{x} \geq 0.
    \end{empheq}

    (ii) For a.e. $t \in \open{0}{+\infty}$, $(\gamma_L u (t), \gamma_R u (t)) \in \cG$.
    \smallskip 

    When $\mathbf{(A)}$ or $\mathbf{(B)}$ holds, we say that $u$ is an entropy solution to \eqref{eq:CLDiscontinuous} with initial datum $u_o$.
\end{proposition}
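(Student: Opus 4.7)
The plan is to reduce the equivalence to a careful passage of test functions across the interface $\{x=0\}$, using that under \eqref{eq:convexity}, the fluxes $f_{l,r}$ are genuinely nonlinear, so Panov's strong trace theorem provides one-sided traces $\gamma_L u, \gamma_R u \in \L{\infty}(\open{0}{+\infty})$. With these traces in hand, both implications amount to matching the interface contribution in $\mathbf{(A)}$ with the germ condition plus the bulk inequality in $\mathbf{(B)}$.

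For $\mathbf{(A)} \Rightarrow \mathbf{(B)}$: part \textit{(i)} is immediate, since restricting \eqref{eq:DiscontinuousEI1} to test functions supported in $\R^+ \times (\R \backslash \{0\})$ makes the interface integral vanish. For part \textit{(ii)}, I would test \eqref{eq:DiscontinuousEI1} against $\psi(t)\chi_\epsilon(x)$, where $\chi_\epsilon$ is a tent function of height $1$ supported on $[-\epsilon, \epsilon]$ and $\psi \in \Cc{\infty}(\open{0}{+\infty}, \R^+)$. The $\p_t$ contribution is $O(\epsilon)$; the $\p_x$ contribution converges, by strong traces, to $\int_0^{+\infty} [\Phi_l(\gamma_L u, k_l) - \Phi_r(\gamma_R u, k_r)] \psi \, \d{t}$; the interface term yields $\int_0^{+\infty} \cR(k_l, k_r) \psi \, \d{t}$. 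Specializing to $(k_l, k_r) \in \cG$ cancels the remainder by Proposition \ref{pp:FluxAJG}\textit{(i)}, so
\[
\Phi_l(\gamma_L u(t), k_l) - \Phi_r(\gamma_R u(t), k_r) \geq 0 \quad \text{for a.e. } t, \text{ every } (k_l, k_r) \in \cG.
\]
Coupling this with the Rankine--Hugoniot identity $f_l(\gamma_L u) = f_r(\gamma_R u)$, obtained from \eqref{eq:DiscontinuousEI1} by taking $k_l = k_r = k$ with $|k| > \|u\|_\infty$ and combining the $\pm k$ versions to extract the linear equation across the jump, the maximality condition \eqref{eq:MaximalGerm} forces $(\gamma_L u(t), \gamma_R u(t)) \in \cG$ for a.e.\ $t$.

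For $\mathbf{(B)} \Rightarrow \mathbf{(A)}$: given $\varphi \in \Cc{\infty}(\R^+ \times \R, \R^+)$, I would pick a cutoff $\chi_\eta \in \Ck{\infty}(\R, [0,1])$ vanishing on $[-\eta, \eta]$ and equal to $1$ outside $[-2\eta, 2\eta]$, and apply $\mathbf{(B)}$\textit{(i)} to the test function $\chi_\eta \varphi$. As $\eta \to 0$, the pieces involving $\chi_\eta \p_t \varphi$ and $\chi_\eta \p_x \varphi$ reproduce the bulk integrals of \eqref{eq:DiscontinuousEI1}, while the contribution from $(\p_x \chi_\eta) \varphi \Phi$ localizes on $\{x=0\}$ and converges, via strong traces, to $\int_0^{+\infty} [\Phi_r(\gamma_R u, k_r) - \Phi_l(\gamma_L u, k_l)] \varphi(t, 0) \, \d{t}$. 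Since $(\gamma_L u, \gamma_R u) \in \cG$ by $\mathbf{(B)}$\textit{(ii)}, Proposition \ref{pp:FluxAJG}\textit{(ii)}, applied with $(u_l, u_r) := (k_l, k_r)$ and with $(\gamma_L u, \gamma_R u)$ in the role of the germ element, together with the symmetry $\Phi_{l,r}(a, b) = \Phi_{l,r}(b, a)$, gives
\[
\Phi_r(\gamma_R u(t), k_r) - \Phi_l(\gamma_L u(t), k_l) \leq \cR(k_l, k_r) \quad \text{for a.e. } t,
\]
so the boundary contribution is bounded by $\int_0^{+\infty} \cR(k_l, k_r) \varphi(t, 0) \, \d{t}$, which is precisely the interface term of \eqref{eq:DiscontinuousEI1}.

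The main obstacle is the trace passage at $\{x = 0\}$: one must justify $\epsilon^{-1} \int_{-\epsilon}^0 \Phi_l(u(t, x), k_l) \, \d{x} \to \Phi_l(\gamma_L u(t), k_l)$ in $\Lloc{1}(\open{0}{+\infty})$, and symmetrically on the right. This is exactly where \eqref{eq:convexity}, via genuine nonlinearity and Panov's strong trace theorem, enters and replaces the fully abstract germ arguments used in \cite{AKR2011} for general fluxes. Once strong traces are secured, every other step above is a routine limit computation.
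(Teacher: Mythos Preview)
Your argument is correct and is precisely the route taken in \cite[Theorem 3.18]{AKR2011}, to which the paper's own proof simply defers; the paper adds only the remark that strong one-sided traces along $\{x=0\}$ exist via genuine nonlinearity and \cite{Vasseur2001, NPS2018}, which is exactly the Panov/Vasseur input you invoke. In that sense you have written out what the paper leaves as a citation, and the key mechanical steps (localization with a tent/cutoff, use of Proposition~\ref{pp:FluxAJG}\textit{(i)} to kill $\cR$ on $\cG$, the maximality~\eqref{eq:MaximalGerm} after Rankine--Hugoniot for $\mathbf{(A)}\Rightarrow\mathbf{(B)}$, and Proposition~\ref{pp:FluxAJG}\textit{(ii)} with the symmetry of $\Phi_{l,r}$ for $\mathbf{(B)}\Rightarrow\mathbf{(A)}$) are all correctly identified.
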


\begin{proof}
    See the proof of \cite[Theorem 3.18]{AKR2011}. 
    It is worth mentioning that either \eqref{eq:DiscontinuousEI1} or \eqref{eq:DiscontinuousEI2} implies that $u$ admit strong traces along $\{x=0\}$. Indeed, since $f_l$ and $f_r$ are genuinely nonlinear in the sense that
    \[
        \forall s \in \R, \quad \text{meas} \bigl(\left\{p \in \R \; : \; 
        f_{l,r}'(p) = s \right\} \bigr) = 0,
    \]

    existence of strong traces is obtained from the works of \cite{Vasseur2001, NPS2018}.
\end{proof}

We now state the well-posedness result regarding \eqref{eq:CLDiscontinuous}.

\begin{theorem}
    \label{th:WellPosednessDiscontinuous}
    Let $f_l, f_r \in \Ck{1}(\R, \R)$ verify \eqref{eq:convexity}.
    \begin{enumerate}[label=\bf(D\arabic*)]
        \item \label{item:WellPosednessDiscontinuous} 
        Fix $u_o \in \L{\infty}(\R, \R)$. Then there exist a unique entropy solution to 
        \eqref{eq:CLDiscontinuous} with initial datum $u_o$.

        \item \label{item:StabilityDiscontinuous} 
        Let $u_o, v_o \in \L{\infty}(\R, \R)$. We denote by $u$, resp. $v$, the entropy 
        solution to \eqref{eq:CLDiscontinuous} with initial datum $u_o$, resp. $v_o$. 
        Then, there exists $L > 0$ such that for all $R > 0$ and for all $t > 0$,
        \begin{equation}
            \label{eq:StabilityDiscontinuous}
            \begin{aligned}
                \int_{\abs{x} \leq R} \abs{u(t, x) - v(t, x)} \d{x}
                & \leq \int_{\abs{x} \leq R+Lt} \abs{u_o(x) - v_o(x)} \d{x} \\[5pt]
                \int_{\abs{x} \leq R} (u(t, x) - v(t, x))^+ \d{x}
                & \leq \int_{\abs{x} \leq R+Lt} (u_o(x) - v_o(x))^+ \d{x}.
            \end{aligned}
        \end{equation}
    \end{enumerate}
\end{theorem}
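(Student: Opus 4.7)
The strategy is to exploit the explicit germ description \eqref{eq:Germ} and the interface-flux formula \eqref{eq:FluxAJG} built up in this section, specializing to strictly convex $f_{l,r}$ the general machinery of \cite{AKR2011}; both items are essentially contained in that reference, but the present setting simplifies several arguments and allows a self-contained treatment using only Propositions \ref{pp:GermL1DMax}, \ref{pp:FluxAJG} and \ref{pp:EquivalentDefinitions}.

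For item \ref{item:StabilityDiscontinuous}, my plan is to run Kruzhkov's doubling-of-variables method starting from formulation $\mathbf{(A)}$ of Proposition \ref{pp:EquivalentDefinitions}, written in parallel for $u$ and $v$. Pairing the two inequalities against piecewise constant data $\kappa = k_l \1_{\open{-\infty}{0}} + k_r \1_{\open{0}{+\infty}}$ and passing to the Kato limit produces $\p_t \abs{u-v} + \p_x \Phi(x, u, v) \leq 0$ in each open half-plane $\{x<0\}$ and $\{x>0\}$, together with an interface term involving the strong traces $(\gamma_L u, \gamma_R u)$ and $(\gamma_L v, \gamma_R v)$. These traces exist thanks to the genuine nonlinearity of $f_{l,r}$ (as invoked in the proof of Proposition \ref{pp:EquivalentDefinitions}), and both trace pairs lie in $\cG$ for a.e. $t$, so the dissipativity inequality \eqref{eq:DissipativeGerm} gives the correct sign for the interface contribution. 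Integrating against a cut-off supported in the cone $\abs{x} \leq R + Lt$, with $L$ the maximal wave speed over the range of $u$ and $v$, yields the first estimate of \eqref{eq:StabilityDiscontinuous}. The comparison estimate follows by repeating the argument with the half-Kruzhkov entropy $(u-k)^+$ in place of $\abs{u-k}$, which is admissible here thanks to the maximality of $\cG$ from Proposition \ref{pp:GermL1DMax}(iii). Uniqueness in item \ref{item:WellPosednessDiscontinuous} is then the special case $u_o = v_o$.

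For existence in item \ref{item:WellPosednessDiscontinuous}, I would construct $u$ as the limit of a Godunov-type scheme whose numerical flux at the interface $\{x=0\}$ is precisely $\fint$ from \eqref{eq:FluxAJG}, and which reduces to the standard Godunov scheme on each half-line. Uniform $\L{\infty}$ bounds follow from the monotonicity of such a scheme combined with discrete stationary solutions built from elements of $\cG$, which serve as monotone barriers. In the absence of a global $\BV$ estimate (a well-known obstruction at a flux interface), $\Lloc{1}$ compactness is obtained by the compensated compactness method, exploiting the genuine nonlinearity of $f_l$ and $f_r$. Along a converging subsequence, the discrete entropy inequalities pass to \eqref{eq:DiscontinuousEI2} in $\R \backslash \{0\}$, while the cell straddling the interface contributes an error term dominated, via Proposition \ref{pp:FluxAJG}(ii), by $\cR(k_l, k_r)\varphi(t, 0)$, yielding exactly \eqref{eq:DiscontinuousEI1}. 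The main obstacle is precisely this interface analysis at the limiting stage: one has to show that the discrete interface fluxes produce, in the limit, the remainder term $\cR(k_l, k_r)\varphi(t, 0)$ with the correct sign, which rests on Proposition \ref{pp:FluxAJG}(ii) and on the strong-trace results \cite{Vasseur2001, NPS2018} unlocked by \eqref{eq:convexity}.
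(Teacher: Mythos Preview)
Your proposal is correct and follows essentially the same approach as the paper, which simply cites \cite[Theorems~3.11--3.19]{AKR2011} for the stability bounds and \cite{AJG2004} for existence via a finite volume scheme with interface flux $\fint$. Your more detailed sketch of the doubling-of-variables argument exploiting the germ dissipativity \eqref{eq:DissipativeGerm} for \ref{item:StabilityDiscontinuous}, and of the Godunov-plus-compensated-compactness route for existence, aligns with the content of those references.
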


\begin{proof}
    The proofs of the stability bounds can be found in 
    \cite[Theorems 3.11-3.19]{AKR2011}. Regarding the existence, one can build a simple 
    finite volume scheme and prove its convergence, see for instance \cite{AJG2004}. 
    The flux to put at the interface $\{x = 0\}$ in the scheme is precisely $\fint$. 
\end{proof}

We conclude this section explaining how to solve the Riemann problem for 
\eqref{eq:CLDiscontinuous}. Indeed, the structure of the solution to Riemann problems 
will be used in the convergence analysis, see in particular 
Section \ref{sssec:CompComp}.

Given $(u_l, u_r)\in\R^2$, we solve \eqref{eq:CLDiscontinuous} with the initial 
condition 
\[
    u_o(x) = 
    \left\{ 
        \begin{array}{ccl}
        u_l & \text{if} & x <0 \\
        u_r & \text{if} & x>0.
        \end{array}
    \right.
\]

First, compute the interface flux given by \eqref{eq:FluxAJG}. We have:
\begin{equation}
    \label{eq:InterfaceFlux}
    \fint(u_l, u_r) = 
    \left\{ 
        \begin{array}{ccl}
        \max\{ \min f_l, f_r(u_r) \} & \text{if} 
        & u_l \leq \alpha_l \; \text{and} \; u_r\leq \alpha_r \quad \text{(I)} \\
        \max\{\min f_l, \min f_r\} & \text{if} 
        & u_l \leq \alpha_l \; \text{and} \; u_r > \alpha_r \quad \text{(II)} \\
        \max \{f_l(u_l), f_r(u_r) \} & \text{if} 
        & u_l > \alpha_l \; \text{and} \; u_r \leq \alpha_r \quad \text{(III)} \\
        \max \{f_l(u_l), \min f_r \} & \text{if} 
        & u_l > \alpha_l \; \text{and} \; u_r > \alpha_r \quad \text{(IV)}.
        \end{array}
    \right.
\end{equation}

The value of $\fint(u_l, u_r)$ imposes the value of one of the traces at the interface. 
For instance, if $\fint(u_l, u_r) = f_r(u_r)$, then the right-hand trace of the solution along $\{x=0\}$ is equal to $\gamma_r \coloneq u_r$. To compute the left-hand trace, one solves $f_l(\gamma) = f_r(\gamma_r)$, $\gamma \in \R$. This equation admits two solutions, $\gamma_l^- \leq \alpha_l \leq \gamma_l^+$, but only one of them is such that the classical Riemann problem with flux $f_l$ and states $(\gamma_l, u_r)$ only displays waves of negative speeds. This determines the left-hand trace. Finally, the solution to the Riemann problem is obtained by patching together:
\begin{itemize}
    \item the solution to the classical Riemann problem with flux $f_l$ and states $(u_l, \gamma_l)$ on $\open{0}{+\infty} \times \R^-$ ;
    \item the stationary (non entropic) shock connecting $(\gamma_l, \gamma_r)$ at $x = 0$ ;
    \item the solution to the classical Riemann problem with flux $f_r$ and states $(\gamma_r, u_r)$ on $\open{0}{+\infty} \times \R^+$ .
\end{itemize}

As an illustration, consider the data
\[
    f_l(u) \coloneq \frac{u^2}{2}, \quad 
    f_r(u) \coloneq u^2, \quad
    u_o^{(1)} \equiv -1, \quad
    u_o^{(2)}(x) \coloneq
    \left\{ 
        \begin{array}{ccl}
        -1 & \text{if} & x <0 \\
        1 & \text{if} & x>0.
        \end{array}
    \right.
\]

The values of $u_o^{(1)}$ lead to Case (I). The interface flux is $f_r(-1)$, therefore the right-hand trace of the solution is $\gamma_r = -1$. Then,
\[
    f_l(\gamma) = f_r(\gamma_r) \iff \gamma \in \{-\sqrt{2}, \sqrt{2}\}.
\]

The left-hand trace of the solution is $\gamma_l \coloneq -\sqrt{2}$ because the solution to the Riemann problem with flux $f_l$ and states $(-1, \sqrt{2})$ is a rarefaction wave presenting a positive speed. Consequently, the solution is 
\[
    u^{(1)}(t, x) = 
    \left\{ 
        \begin{array}{ccl}
        -1 & \text{if} & x < -\frac{t}{2(\sqrt{2}-1)} t \\
        -\sqrt{2} & \text{if} & -\frac{t}{2(\sqrt{2}-1)} t < x < 0 \\
        -1 & \text{if} & x > 0.
        \end{array}
    \right.
\]

On the other hand, the values of $u_o^{(2)}$ lead to Case (II). The interface flux is $0 = f_l(0) = f_r(0)$. We deduce that both traces of the solution at the interface are equal to 0. Therefore, the solution is obtained by patching together two rarefaction waves:
\[
    u^{(2)}(t, x) = 
    \left\{ 
        \begin{array}{ccl}
        -1 & \text{if} & x < -t \\
        \frac{x}{t} & \text{if} & -t < x < 0 \\
        \frac{x}{2t} & \text{if} & 0 < x < 2t \\
        1 & \text{if} & x > 2t.
        \end{array}
    \right.
\]

Below are listed the possible structures of the solution to a Riemann problem with discontinuous flux.

\paragraph{If $(u_l, u_r) \in \cG$,} 
then the solution is a stationary non-classical shock (SNS).

\paragraph{Case (I)} The solution is the gluing of 
shock/rarefaction - SNS - constant $u_r$ or rarefaction - SNS - shock.

\paragraph{Case (II)} The solution is the gluing of 
shock/rarefaction - SNS - rarefaction 
or rarefaction - SNS - shock/rarefaction. 

\paragraph{Case (III)} The solution is the gluing of 
shock - SNS - constant $u_r$, or constant $u_l$ - SNS - shock. 

\paragraph{Case (IV)} The solution is the gluing of 
shock-SNS-rarefaction, or constant $u_l$ - SNS - shock/rarefaction.

In any case, the solution presents at most one stationary non-classical shock and at 
most one classical shock.
\bigskip

The content of this section can be adapted in a straightforward way to the case 
where $F$ in \eqref{eq:CLDiscontinuous} presents a finite number of space 
discontinuities.

\subsection{Steady states and stability}
\label{ssec:SteadyStates}

This section is devoted to the proof of the $\L{\infty}$ stability of the scheme. 

\begin{lemma}
    \label{lmm:CompactHetero}
    Assume that $H$ satisfies \eqref{eq:smoothness}--\eqref{eq:LocalizedSpace}--\eqref{eq:convexity}. Then, 
    there exists a unique function $\alpha \in \Ck{2}(\R, \R)$ such that for 
    all $x \in \R$, $\p_u H(x, \alpha(x)) = 0$. Moreover, 
    \[
        \forall x \in \R, \quad \abs{x} \geq X \implies \alpha'(x) = 0.
    \]
\end{lemma}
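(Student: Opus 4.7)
The plan is to apply the implicit function theorem to the equation $\partial_u H(x, u) = 0$, and then use the compact heterogeneity assumption to deduce vanishing of $\alpha'$ outside $[-X, X]$.

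First, I would establish pointwise existence and uniqueness of $\alpha(x)$. Fix $x \in \mathbb{R}$. By \eqref{eq:convexity}, the map $u \mapsto \partial_u H(x, u)$ is a $\Ck{1}$-diffeomorphism of $\R$ onto itself, so in particular it is a bijection. Hence there is a unique $\alpha(x) \in \R$ satisfying $\partial_u H(x, \alpha(x)) = 0$. This defines $\alpha : \R \to \R$ unambiguously.

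Next, the regularity. Define $F(x, u) \coloneq \partial_u H(x, u)$. By \eqref{eq:smoothness}, $F \in \Ck{2}(\R^2, \R)$. Because $u \mapsto F(x, u)$ is an increasing $\Ck{1}$-diffeomorphism of $\R$, its derivative $\partial_u F(x, u) = \partial_{uu}^2 H(x, u)$ is everywhere nonzero (diffeomorphism condition) and nonnegative (increasing); by continuity we conclude $\partial_{uu}^2 H(x, u) > 0$ for all $(x, u) \in \R^2$. In particular this holds at $(x, \alpha(x))$, so the implicit function theorem applies globally and yields $\alpha \in \Ck{2}(\R, \R)$, with
\[
    \alpha'(x) = - \frac{\partial_{xu}^2 H(x, \alpha(x))}{\partial_{uu}^2 H(x, \alpha(x))}.
\]

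Finally, the compact support of $\alpha'$. By \eqref{eq:LocalizedSpace}, $\partial_x H(x, u) = 0$ for every $u \in \R$ whenever $\abs{x} \geq X$. Since $H \in \Ck{3}$, Schwarz's theorem gives
\[
    \partial_{xu}^2 H(x, u) = \partial_{ux}^2 H(x, u) = \partial_u \bigl( \partial_x H(x, u) \bigr) = 0
    \quad \text{for all } u \in \R \text{ and } \abs{x} \geq X.
\]
Combined with $\partial_{uu}^2 H > 0$, the formula above forces $\alpha'(x) = 0$ for $\abs{x} \geq X$, which is the last assertion.

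I do not anticipate any serious obstacle: the only small subtlety is extracting $\partial_{uu}^2 H > 0$ from the precise wording of \eqref{eq:convexity}, which combines strict monotonicity with the diffeomorphism property to rule out vanishing of $\partial_{uu}^2 H$. Everything else is a direct application of the implicit function theorem and differentiation of the identity $\partial_u H(x, \alpha(x)) \equiv 0$.
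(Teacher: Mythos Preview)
Your proof is correct and follows exactly the approach indicated in the paper, which simply reads ``Straightforward application of the Implicit Function Theorem.'' You have filled in the details carefully, including the observation that \eqref{eq:convexity} forces $\partial_{uu}^2 H > 0$, and the use of \eqref{eq:LocalizedSpace} to kill $\partial_{xu}^2 H$ outside $[-X,X]$.
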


\begin{proof}
    Straightforward application of the Implicit Function Theorem.    
\end{proof}

Introduce $J \in \N^*$ such that $X \in I_J$. Recall that the space dependency of $H$ 
is localized in $[-X, X]$.

Set for all $j \in \Z$, $\alpha_j \coloneq \alpha(x_j)$ and notice that
\[
    (j \geq J+1 \implies \alpha_j = \alpha(X) )\quad \text{and} \quad 
    (j \leq -J-1 \implies \alpha_j = \alpha(-X)).
\]

It will be convenient to adopt the notation 
\[
    \forall j \in \Z, \quad h_j(u) \coloneq H(x_j, u).
\]

Let us precise that for sufficiently small time interval $\Delta t$, the waves 
emanating at each interface $x = x_{j+1/2}$ do not interact. Consequently, 
\eqref{eq:MF1} rewrites as
\begin{equation}
    \label{eq:MF2}
    \forall j \in \Z, \quad u_j^{n+1} 
    = u_j^n - \lambda (\fint^{j+1/2} (u_{j}^n, u_{j+1}^n) 
    - \fint^{j-1/2} (u_{j-1}^n, u_{j}^n)),
\end{equation}

where following Section \ref{ssec:DiscontinuousFlux},
\[
    \fint^{j+1/2}(a, b) \coloneq 
    \max \biggl\{\god_j(a, \alpha_j), \; \god_{j+1}(\alpha_{j+1}, b) \biggr\}
\]

is the flux across the interface $x = x_{j+1/2}$, with $\god_j$ 
denoting the Godunov flux associated with $h_j$.

It is worth mentioning that for all $j \in \Z$, $\fint^{j+1/2}$ is a monotone, locally 
Lipschitz, numerical flux, see \cite[Section 4]{AJG2004}. Also, remark that in 
general, $\fint^{j+1/2}$ is not consistent, meaning that for all $k\in\R$, 
$\fint^{j+1/2}(k, k)$ is not equal to $h_j(k)$ or to $h_{j+1}(k)$. However, using the 
regularity of $H$, we can prove that a consistency holds, at the limit.

\begin{lemma}
    \label{lmm:ConsistencyInterfaceFlux}
    Assume that $H$ satisfies \eqref{eq:smoothness}--\eqref{eq:LocalizedSpace}--\eqref{eq:convexity}. Then for any 
    $k \in \R$, there exists a positive constant $\mu_k$ such that for all $j \in \Z$,
    \[
        \abs{\fint^{j+1/2}(k, k) - h_{j+1}(k)} \leq \mu_k \; \Delta x.
    \]
\end{lemma}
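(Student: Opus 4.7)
The plan is to leverage Proposition \ref{pp:FluxAJG}(i) via the trivial bound $|\fint^{j+1/2}(k,k) - h_{j+1}(k)| \leq \cR_j(k,k)$, where $\cR_j$ denotes the remainder term associated with the interface pair $(f_l, f_r) = (h_j, h_{j+1})$ and critical points $(\alpha_l, \alpha_r) = (\alpha_j, \alpha_{j+1})$. Remark \ref{rk:ConstantRemainderTerm} then gives an explicit formula for $\cR_j(k, k)$ that splits into two regimes depending on whether $k$ sits between the two critical points or not.

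In the regime where $k$ does \emph{not} satisfy $\alpha_{j+1} \leq k < \alpha_j$, Remark \ref{rk:ConstantRemainderTerm} gives $\cR_j(k,k) = \abs{h_j(k) - h_{j+1}(k)} = \abs{H(x_j, k) - H(x_{j+1}, k)}$. By the mean value theorem, this is bounded by $\|\p_x H(\cdot, k)\|_{\L{\infty}(\R)} \Delta x$, and the norm is finite thanks to \eqref{eq:smoothness} and \eqref{eq:LocalizedSpace} (which force $\p_x H(\cdot, k)$ to be continuous with support in $[-X, X]$).

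The delicate regime is $\alpha_{j+1} \leq k < \alpha_j$, where
\[
    \cR_j(k,k) = \abs{\max\{\min h_j, \min h_{j+1}\} - h_j(k)}
    + \abs{\max\{\min h_j, \min h_{j+1}\} - h_{j+1}(k)}.
\]
Here I use three ingredients. First, Lemma \ref{lmm:CompactHetero} ensures $\alpha \in \Ck{2}(\R, \R)$ with $\alpha'$ supported in $[-X, X]$, so $\abs{\alpha_j - \alpha_{j+1}} \leq \|\alpha'\|_{\L{\infty}} \Delta x$ and hence $\abs{k - \alpha_j}, \abs{k - \alpha_{j+1}} \leq \|\alpha'\|_{\L{\infty}} \Delta x$. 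Second, since $h_j'(\alpha_j) = 0$ and $h_j \in \Ck{3}$, Taylor's formula at $\alpha_j$ gives $\abs{h_j(k) - \min h_j} \leq \frac{1}{2} M_k (k-\alpha_j)^2 = O(\Delta x^2)$, where $M_k$ bounds $\p_{uu}^2 H$ on a compact neighbourhood of $\{(x, p) : |x| \leq X, \; p \; \text{between} \; k \; \text{and} \; \alpha(x)\}$; the same holds for $h_{j+1}$. Third, the gap between the minima is controlled by introducing $g(x) \coloneq H(x, \alpha(x))$: using $\p_u H(x, \alpha(x)) = 0$, one gets $g'(x) = \p_x H(x, \alpha(x))$, so $\abs{\min h_j - \min h_{j+1}} = \abs{g(x_j) - g(x_{j+1})} \leq \|g'\|_{\L{\infty}} \Delta x$, finite by \eqref{eq:LocalizedSpace}.

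Combining these estimates yields $\cR_j(k,k) = O(\Delta x) + O(\Delta x^2) = O(\Delta x)$ uniformly in $j$, with a constant $\mu_k$ depending only on $k$ and on the $\Ck{3}$-norm of $H$ over the relevant compact set. The main obstacle is this obstructed case: one must combine the quadratic Taylor gain near critical points (which is essential, since $k-\alpha_j$ and $k-\alpha_{j+1}$ are only $O(\Delta x)$, not $o(\Delta x)$) with the linear bound on the gap between the two minimum values. Away from $[-X, X]$, \eqref{eq:LocalizedSpace} makes $h_j \equiv h_{j+1}$ and $\alpha_j = \alpha_{j+1}$, so the bound is trivially $0$; only finitely many indices genuinely contribute, but the uniform-in-$j$ form of the constants makes the statement valid for all $j \in \Z$.
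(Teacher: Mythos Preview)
Your proof is correct and takes a somewhat different route from the paper's. The paper works directly with the four-case formula for $\fint^{j+1/2}(k,k)$ (Cases (I)--(IV) following \eqref{eq:InterfaceFlux}) and bounds $\delta = \abs{\fint^{j+1/2}(k,k) - h_{j+1}(k)}$ in each case using only first-order Lipschitz estimates in $x$ and $u$. You instead route everything through the remainder $\cR_j(k,k)$ and Remark~\ref{rk:ConstantRemainderTerm}, which collapses four cases to two and reuses already-stated material; this is a clean alternative. One comment: your claim that the quadratic Taylor gain at the critical point is ``essential'' is not accurate. In the regime $\alpha_{j+1} \leq k < \alpha_j$, the first-order bound
\[
    \abs{h_j(k) - h_j(\alpha_j)} \leq \sup_{\underset{\abs{p} \leq \norm{\alpha}_{\L{\infty}(\R)}}{x \in \R}} \abs{\p_u H(x, p)} \cdot \abs{k - \alpha_j}
    \leq \norm{\alpha'}_{\L{\infty}(\R)} \sup_{\underset{\abs{p} \leq \norm{\alpha}_{\L{\infty}(\R)}}{x \in \R}} \abs{\p_u H(x, p)} \; \Delta x
\]
already suffices, and this is exactly what the paper uses in its Case~(II). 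Your second-order Taylor argument yields a sharper $O(\Delta x^2)$ on those terms, which is harmless but not needed for the $O(\Delta x)$ conclusion.
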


\begin{proof}
    Let $j \in \Z$. Recall that for all $k \in \R$,
    \[
        \fint^{j+1/2}(k, k) =
        \left\{ 
            \begin{array}{cclc}
            \max\{ h_j(\alpha_j), h_{j+1}(k) \} & \text{if} 
            & k \leq \alpha_j, \alpha_{j+1} \quad & \text{(I)} \\
            \max\{ h_j(\alpha_j), h_{j+1}(\alpha_{j+1}) \} & \text{if} 
            & \alpha_{j+1} \leq k \leq \alpha_j \quad & \text{(II)} \\
            \max\{ h_j(k), h_{j+1}(k) \} & \text{if} 
            & \alpha_{j} \leq k \leq \alpha_{j+1} \quad & \text{(III)} \\
            \max\{ h_j(k), h_{j+1}(\alpha_{j+1}) \} & \text{if} 
            & k > \alpha_j, \alpha_{j+1} \quad & \text{(IV)}
            \end{array}
        \right.
    \]

    The proof reduces to a case by case study. Set 
    $\delta \coloneq \abs{\fint^{j+1/2}(k, k) - h_{j+1}(k)}$.

    \textbf{Case (I)} If $\fint^{j+1/2}(k, k) = h_{j+1}(k)$, then $\delta = 0$. 
    Otherwise, $\fint^{j+1/2}(k, k) = h_{j}(\alpha_j)$, meaning that 
    $h_j(\alpha_j) \geq h_{j+1}(k)$. Therefore,
    \[
        \delta = h_{j}(\alpha_j) - h_{j+1}(k) 
        \leq h_{j}(k) - h_{j+1}(k)
        \leq \sup_{x \in \R} \abs{\p_x H(x, k)} \; \Delta x.
    \]

    \textbf{Case (II)} In that case, the estimate follows from the fact that 
    \[
        \abs{h_j(\alpha_j) - h_{j+1}(k)} \leq 
        \bigl( \norm{\alpha'}_{\L{\infty}(\R)} 
        \sup_{\underset{\abs{p} \leq \norm{\alpha}_{\L{\infty}(\R)}}{x \in \R}} \abs{\p_u H(x, p)} + \sup_{x \in \R} \abs{\p_x H(x, k)} \bigr) \Delta x.
    \]

    \textbf{Case (III)} In that case, we can bound $\delta$ by 
    $\sup_{x \in \R} \abs{\p_x H(x, k)} \; \Delta x$.

    \textbf{Case (IV)} If $\fint^{j+1/2}(k, k) = h_{j}(k)$, then we bound $\delta$ by 
    $\sup_{x \in \R} \abs{\p_x H(x, k)} \; \Delta x$. 
    Otherwise, $\fint^{j+1/2}(k, k) = h_{j+1}(\alpha_{j+1})$, meaning that 
    $h_{j+1}(\alpha_{j+1}) \geq h_{j}(k)$. Therefore,
    \[
        \begin{aligned}
            \delta 
            & \leq \abs{h_{j+1}(\alpha_{j+1}) - h_j(k)} + \abs{h_j(k) - h_{j+1}(k)} \\
            & = (h_{j+1}(\alpha_{j+1}) - h_j(k)) + \abs{h_j(k) - h_{j+1}(k)} \\
            & \leq 2 \abs{h_j(k) - h_{j+1}(k)} \\
            & \leq 2 \sup_{x \in \R} \abs{\p_x H(x, k)} \; \Delta x.
        \end{aligned}
    \]
\end{proof}

To prove the $\L{\infty}$ bounds \ref{item:Stability}, we explicitly construct steady states of the scheme \eqref{eq:MF2}. 

\begin{definition}
    \label{def:SteadyState}
    We say that a sequence $(v_j)_j$ is a steady state of 
    \eqref{eq:MF2} if 
    \[
        \forall j \in \Z, \quad 
        \fint^{j-1/2} (v_{j-1}, v_{j}) = \fint^{j+1/2} (v_{j}, v_{j+1}).
    \]
\end{definition}

Following Section \ref{ssec:DiscontinuousFlux}, it will be useful to introduce for all 
$j \in \Z$ and $y \in \mathopen] \min h_j, +\infty \mathclose[$, the two solutions 
$S_{j}^-(y) < \alpha_j < S_j^+(y)$ of the equation $h_j(x) = y$, $x \in \R$. 
For all $j \in \Z$, the definition of the germ $\cG_{j+1/2}$ follows from 
Definition \ref{def:Germ}.

\begin{lemma}
    \label{lmm:SteadyState}
    Assume \eqref{eq:smoothness}--\eqref{eq:LocalizedSpace}--\eqref{eq:convexity} hold. Let $c \in \R$ such that $c \geq \sup_{\R} \alpha$. Define for all $j \in \Z$,
    \[
        v_{j+1} \coloneq 
        \left\{ 
        \begin{array}{ccl}
            c & \text{if} & j \leq -J-2 \\
            S_{j+1}^+(h_j(v_j)) & \text{if} & j \geq -J-1,
        \end{array}
        \right.  \quad \quad
        w_j \coloneq 
        \left\{ 
        \begin{array}{ccl}
            S_{j}^+ (h_{j+1}(w_{j+1})) & \text{if} & j \leq J \\
            c & \text{if} & j \geq J+1.
        \end{array}
        \right.
    \]

    Then $(v_j)_j$ and $(w_j)_j$ are steady states of \eqref{eq:MF2} 
    in the sense of Definition \ref{def:SteadyState}.
\end{lemma}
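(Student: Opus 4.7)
The strategy rests on the characterization of the germ given in Proposition \ref{pp:FluxAJG}(i): at any interface $x = x_{j+1/2}$, a couple $(a, b)$ lies in $\cG_{j+1/2}$ if and only if the corresponding remainder $\cR$ vanishes, which by definition of $\cR$ means precisely $\fint^{j+1/2}(a, b) = h_j(a) = h_{j+1}(b)$. I will therefore prove the stronger assertion $(v_j, v_{j+1}) \in \cG_{j+1/2}$ for every $j \in \Z$, which immediately delivers the steady-state property by a short telescoping: writing this identity at the two successive interfaces $j - 1/2$ and $j + 1/2$ yields
\[
    \fint^{j-1/2}(v_{j-1}, v_j) = h_j(v_j) = \fint^{j+1/2}(v_j, v_{j+1}),
\]
which is exactly Definition \ref{def:SteadyState}.

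To place the couple $(v_j, v_{j+1})$ in the branch $\cG_1$ of the germ, I would first establish by induction on $j$ that $v_j \geq \alpha_j$. For $j \leq -J-1$ the inequality is immediate, since $v_j = c \geq \sup_\R \alpha \geq \alpha_j$. For the inductive step $j \to j+1$ with $j \geq -J-1$, the definition $v_{j+1} = S_{j+1}^+(h_j(v_j))$ forces $v_{j+1} \geq \alpha_{j+1}$, by the very meaning of the upper branch $S_{j+1}^+$ of $h_{j+1}^{-1}$. With this bound secured, the membership $(v_j, v_{j+1}) \in \cG_1$ is then the literal content of the definition of $v_{j+1}$ for $j \geq -J-1$; and for $j \leq -J-2$, assumption \eqref{eq:LocalizedSpace} together with Lemma \ref{lmm:CompactHetero} gives $h_j = h_{j+1} = h_{-J-1}$ and $\alpha_j = \alpha_{j+1} = \alpha(-X)$, so the constant couple $(c, c)$ automatically satisfies $c = S_{-J-1}^+(h_{-J-1}(c))$ since $c \geq \alpha(-X)$, placing it in $\cG_1$ as well.

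The treatment of $(w_j)_j$ is the mirror image, run as a downward induction on $j$ starting from $w_{J+1} = c \geq \alpha_{J+1}$. Each recursive formula $w_j = S_j^+(h_{j+1}(w_{j+1}))$ gives simultaneously $w_j \geq \alpha_j$ and the Rankine--Hugoniot identity $h_j(w_j) = h_{j+1}(w_{j+1})$; combining this with $w_{j+1} \geq \alpha_{j+1}$ (ensured by the previous induction step) shows that $w_{j+1} = S_{j+1}^+(h_j(w_j))$, hence $(w_j, w_{j+1}) \in \cG_1$ at every interface, and the telescoping conclusion goes through verbatim.

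The only minor technical point is well-definedness: the expressions $S_{j+1}^+(h_j(v_j))$ and $S_j^+(h_{j+1}(w_{j+1}))$ are meaningful only when their arguments exceed $\min h_{j+1}$ and $\min h_j$ respectively. Since by construction $h_j(v_j)$ is independent of $j \geq -J-1$ (equal to $h_{-J-1}(c)$) and only finitely many indices lie in the nontrivial window $\abs{j} \leq J+1$, this is automatic once $c$ is taken sufficiently large, possibly strengthening the hypothesis $c \geq \sup_\R \alpha$. I do not expect any genuine obstacle; the core conceptual ingredient is simply the observation that one always sits on the $\cG_1$ branch of the germ, which is exactly what the choice of $S^+$ (rather than $S^-$) in the recursions encodes.
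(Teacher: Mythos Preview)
Your proposal is correct and follows essentially the same route as the paper: show that each pair $(v_j, v_{j+1})$ lies in the $\cG_1$ branch of $\cG_{j+1/2}$ (via $v_j \geq \alpha_j$ and the Rankine--Hugoniot relation built into the recursion), invoke Proposition~\ref{pp:FluxAJG}(i) to get $\fint^{j+1/2}(v_j, v_{j+1}) = h_j(v_j) = h_{j+1}(v_{j+1})$, and telescope. The paper's proof is a terse two-line version of exactly this argument; your explicit induction, the separate handling of $j \leq -J-2$, and the remark on well-definedness of $S_{j+1}^+$ are details the paper leaves implicit.
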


\begin{proof}
    Consider $(v_j)_j$. Since $c \geq \sup_{\R} \alpha$, for all $j \in \Z$, $(v_j, v_{j+1}) \in \cG_{j+1/2}$, see Definition \ref{def:Germ}. Indeed, we have $v_{j} \geq \alpha_j$, $v_{j+1} \geq \alpha_{j+1}$ and $h_{j}(v_j) = h_{j+1}(v_{j+1})$. Consequently, Proposition \ref{pp:FluxAJG} ensures that 
    \[
        h_j(v_j) = \fint^{j+1/2} (v_{j}, v_{j+1}) = h_{j+1}(v_{j+1}).
    \]

    The same reasoning holds for $(w_j)_j$.
\end{proof}

\begin{remark}
    Keep the notations of Lemma \ref{lmm:SteadyState}.
    
    (i) Notice that $(v_j)_j$ and $(w_j)_j$ are stationary. For instance, for all 
    integers $j \geq J+2$,
    \[
        h_{j-1}(v_{j-1}) = h_j(v_j) = h_{j-1}(v_j) \implies v_{j-1} = v_j.
    \] 

    (ii) For all $c \in \R$ such that $c \leq \inf_\R \alpha$, the two following 
    sequences are steady states as well: 
    \[
        v_{j+1} = 
        \left\{ 
        \begin{array}{ccl}
            c & \text{if} & j \leq -J-2 \\
            S_{j+1}^- (h_j(v_j)) & \text{if} & j \geq -J-1,
        \end{array}
        \right.  \quad \quad
        w_j = 
        \left\{ 
        \begin{array}{ccl}
            S_{j}^- (h_{j+1}(w_{j+1})) & \text{if} & j \leq J \\
            c & \text{if} & j \geq J+1.
        \end{array}
        \right.
    \]
\end{remark}

For the construction of the steady states, we will use geometric properties of both $H$ and its Legendre transform with respect to the second variable (see \cite[Appendix 4.2]{CannarsaSinestrariSBook}). We call it $L$ from now on.

\begin{lemma}
    \label{lmm:Legendre}
    Assume that $H$ satisfies \eqref{eq:smoothness}--\eqref{eq:LocalizedSpace}--\eqref{eq:convexity}. Then the following points hold. 

    (i) $L \in \Ck{2}(\R^2, \R)$ and satisfies \eqref{eq:LocalizedSpace}--\eqref{eq:convexity}.

    (ii) We have 
    \begin{equation}
        \label{eq:Legendre}
        \forall \lambda > 0, \; \forall x \in \R, \; \forall p \in \R, \quad
        \lambda \abs{p} - H(x, p) \leq \sup_{\underset{\abs{v} \leq \lambda}{y \in \R}} L(y, v).
    \end{equation}
\end{lemma}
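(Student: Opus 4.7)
The plan is to use the classical definition of the Legendre transform with respect to the second variable,
\[
    L(x, v) \coloneq \sup_{p \in \R} \bigl( v p - H(x, p) \bigr),
\]
and to exploit the strong convexity \eqref{eq:convexity} to obtain an explicit, smooth representation of $L$. Indeed, under \eqref{eq:convexity}, for each $(x, v) \in \R^2$ the map $p \mapsto \partial_u H(x, p)$ is a $\Ck{1}$-diffeomorphism of $\R$, so the supremum is attained at a unique point $p^* = p^*(x, v)$ characterized by $\partial_u H(x, p^*) = v$. The Implicit Function Theorem applied to $(x, v, p) \mapsto \partial_u H(x, p) - v$, together with $\partial_{uu}^2 H > 0$ and $H \in \Ck{3}$, gives $p^* \in \Ck{2}(\R^2, \R)$ and consequently $L(x, v) = v p^*(x, v) - H(x, p^*(x, v)) \in \Ck{2}(\R^2, \R)$.

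For point (i), I would then verify \eqref{eq:LocalizedSpace} and \eqref{eq:convexity} for $L$ by direct computation. Differentiating the envelope expression and using the defining relation $\partial_u H(x, p^*) = v$, the envelope-theorem cancellations yield
\[
    \partial_x L(x, v) = -\partial_x H(x, p^*(x, v)),
    \qquad \partial_v L(x, v) = p^*(x, v).
\]
The first identity, combined with \eqref{eq:LocalizedSpace} for $H$, gives $\partial_x L \equiv 0$ outside $[-X, X]$. The second identity shows that $v \mapsto \partial_v L(x, v)$ is precisely the inverse of the strictly increasing $\Ck{1}$-diffeomorphism $u \mapsto \partial_u H(x, u)$, hence itself a strictly increasing $\Ck{1}$-diffeomorphism of $\R$ onto itself, which is \eqref{eq:convexity}.

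For point (ii), given $\lambda > 0$, $x \in \R$ and $p \in \R$, I would choose the specific test value $v \coloneq \lambda \, \sgn(p) \in [-\lambda, \lambda]$, so that $v p = \lambda |p|$. Then by definition of $L$,
\[
    L(x, v) \geq v p - H(x, p) = \lambda \abs{p} - H(x, p).
\]
Since $(x, v)$ lies in the set $\{(y, v) : y \in \R, \, \abs{v} \leq \lambda\}$, taking the supremum yields the desired inequality. This last step is immediate; the only genuinely delicate point in the whole lemma is ensuring the $\Ck{2}$ regularity and verifying the structural properties in (i), which rely on the envelope identities above. No significant obstacle is anticipated beyond careful bookkeeping with the Implicit Function Theorem.
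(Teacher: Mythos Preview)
Your proposal is correct. Part (i) is exactly the paper's argument, only spelled out more explicitly via the envelope identities. For part (ii) you are in fact slightly more direct than the paper: the paper first proves the mirror inequality $\lambda|v| - L(x,v) \leq \sup_{|p|\leq\lambda,\,y} H(y,p)$ by plugging $p=\lambda v/|v|$ into the definition of $L$, and then obtains \eqref{eq:Legendre} by invoking that the Legendre transform is an involution on convex functions. Your choice $v=\lambda\,\sgn(p)$ applied directly to the definition of $L$ gives \eqref{eq:Legendre} in one line without the detour through involution; this is a small but genuine simplification.
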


\begin{proof}
    Recall that $L(x, v) \coloneq \sup_{p \in \R} (pv - H(x, p))$, see \cite[Appendix 4.2]{CannarsaSinestrariSBook}. The growth of $H$ ensures that $L$ is well-defined on $\R^2$. 

    \textit{(i)} The regularity of $L$ follows from an application of the Implicit Function Theorem and the fact that $L$ satisfies \eqref{eq:LocalizedSpace} is immediate from the definition. Finally, for all $x \in \R$, $v \mapsto \p_v L(x, v)$ is the reciprocal of $u \mapsto \p_u H(x, u)$, therefore, \eqref{eq:convexity} holds as well.

    \textit{(ii)} We first prove that \eqref{eq:Legendre} holds swapping the roles of $H$ and $L$. 
    
    For all $\lambda > 0$, $x \in \R$ and $v \in \R \setminus \{0\}$, specializing with $p = \frac{\lambda v}{\abs{v}}$ in the definition of $L$ yields:
    \[
        L(x, v) \geq \lambda \abs{v} - H(x, \frac{\lambda v}{\abs{v}}) 
        \geq \lambda \abs{v} - \sup_{\underset{\abs{p} \leq \lambda}{y \in \R}} H(y, p). 
    \]

    Inequality \eqref{eq:Legendre} follows from the same computations, using the fact that the Legendre transform is an involution on convex functions.
\end{proof}

We are now in position to construct steady states of the scheme.

\begin{lemma}
    \label{lmm:SteadyStateBis}
    Assume that $H$ satisfies \eqref{eq:smoothness}--\eqref{eq:LocalizedSpace}--\eqref{eq:convexity}.
    Let $(m, M) \in \R^2$ such that $m \leq \inf_{\R} \alpha$ and 
    $M \geq \sup_{\R} \alpha$. Then there exist two steady states of \eqref{eq:MF2}, 
    $(v_j)_j$ and $(w_j)_j$, such that:
    
    (i) $(v_j)_j$ is bounded from below and for all $j \in \Z$, $v_j \leq m$ ;

    (ii) $(w_j)_j$ is bounded from above and for all $j \in \Z$, $w_j \geq M$.
\end{lemma}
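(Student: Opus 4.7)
My plan is to adapt the propagation argument from Lemma \ref{lmm:SteadyState} (using the $S^-$ branch, as in the subsequent remark), but to choose the initial value on the left large enough in absolute value to force $v_j \leq m$ everywhere. Set $C_m \coloneq \sup_{x \in \R} H(x, m)$, which is finite because $H(\cdot, m)$ is continuous and constant in $x$ outside $[-X, X]$ by \eqref{eq:LocalizedSpace}. Writing $h_- \coloneq H(-X, \cdot)$, assumption \eqref{eq:convexity} guarantees that $h_-$ is strictly convex with minimum at $\alpha(-X)$ and $h_-(v) \to +\infty$ as $v \to -\infty$. Since $C_m \geq h_-(m) \geq \min h_-$, I can pick $c \leq m$ such that $h_-(c) \geq C_m$, and then define $v_j \coloneq c$ for $j \leq -J-1$ and $v_{j+1} \coloneq S_{j+1}^-(h_j(v_j))$ for $j \geq -J-1$. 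A one-line induction yields $h_j(v_j) = h_-(c)$ for every $j$, with the inverse $S_{j+1}^-$ well-defined because $h_-(c) \geq C_m \geq h_{j+1}(m) \geq \min h_{j+1}$.

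By construction $v_j \leq \alpha_j$ and $v_{j+1} = S_{j+1}^-(h_j(v_j))$, so $(v_j, v_{j+1}) \in \cG_2 \subset \cG_{j+1/2}$ in the sense of Definition \ref{def:Germ}. Proposition \ref{pp:FluxAJG} then gives $\fint^{j+1/2}(v_j, v_{j+1}) = h_j(v_j) = h_-(c)$, independent of $j$, so the identity of Definition \ref{def:SteadyState} holds and $(v_j)_j$ is a steady state. For the pointwise upper bound, both $v_j$ and $m$ lie in $\mathopen]-\infty, \alpha_j \mathclose]$, where $h_j$ is strictly decreasing, so $h_j(v_j) = h_-(c) \geq C_m \geq h_j(m)$ forces $v_j \leq m$. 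Lower boundedness follows from Lemma \ref{lmm:Legendre}(ii) with $\lambda = 1$:
\[
\abs{v_j} \leq h_j(v_j) + \sup_{\underset{y \in \R}{\abs{w} \leq 1}} L(y, w) = h_-(c) + C_L,
\]
with $C_L$ finite because $L$ is $\Ck{2}$ and spatially constant outside $[-X, X]$ by Lemma \ref{lmm:Legendre}(i).

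The sequence $(w_j)_j$ of part~(ii) is built by the mirror-symmetric procedure: pick $d \geq M$ with $h_-(d) \geq C_M \coloneq \sup_{x \in \R} H(x, M)$, which is possible because $h_-(v) \to +\infty$ also as $v \to +\infty$ on the increasing branch; set $w_j \coloneq d$ for $j \leq -J-1$, and propagate via $w_{j+1} \coloneq S_{j+1}^+(h_j(w_j))$. The pairs now land in $\cG_1$, and the same chain of arguments produces $w_j \geq M$ together with upper boundedness from \eqref{eq:Legendre}. The only delicate point is the consistent tracking of germ membership at every interface in order to apply Proposition \ref{pp:FluxAJG}; once this is secured, all remaining inequalities reduce to the monotonicity of $h_j$ on the correct branch of $\alpha_j$.
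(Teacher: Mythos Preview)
Your proof is correct and follows essentially the same route as the paper: propagate a large enough constant through the $S^{\pm}$ branches, use the constancy of $(h_j(v_j))_j$ together with monotonicity to get $v_j \leq m$ (resp.\ $w_j \geq M$), and invoke Lemma~\ref{lmm:Legendre}(ii) for the remaining uniform bound. The only cosmetic difference is that you choose the starting constant via a soft coercivity argument on $h_-$, whereas the paper writes down the explicit value $\overline{M} = \cU_h + \sup_{x,\,|v|\le 1} L(x,v)$ using the Legendre transform from the outset.
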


\begin{proof}
    We only give the details for the construction of $(w_j)_j$. Set 
    \[
        \cU_h \coloneq \max_{x \in \R} H(x, M) \quad \text{and} \quad 
        \overline{M} \coloneq \cU_h + \sup_{\underset{\abs{v} \leq 1}{x \in \R}} L(x, v).
    \]

    By monotonicity, for all $j \in \Z$, $S_j^+(\cU_h) \geq M$. Therefore, 
    taking advantage of Lemma \ref{lmm:Legendre} \textit{(ii)}:
    \[
        \forall j \in \Z, \quad 
        S_j^+(\cU_h) 
        \leq H(x_j, S_j^+(\cU_h)) + \sup_{\underset{\abs{v} \leq 1}{x \in \R}} L(x, v) 
        = \overline{M},
    \]

    ensuring that $\overline{M} \geq M \geq \sup_{\R} \alpha$. Now, define 
    \[
        \forall j \in \Z, \quad
        w_{j+1} \coloneq
        \left\{ 
        \begin{array}{ccl}
            \overline{M} & \text{if} & j \leq -J-2 \\
            S_{j+1}^+(h_j(w_j)) & \text{if} & j \geq -J-1.
        \end{array}
        \right.
    \]

    Lemma \ref{lmm:SteadyState} ensures that $(w_j)_j$ is a steady state of the scheme 
    since $\overline{M} \geq \sup_{\R} \alpha$.

    \paragraph{Claim 1:} for all $j \in \Z$, $w_j \geq M$. The key point is that $(h_j(w_j))_j$ is constant. Therefore, for all $j \in \Z$,
    \[
        h_j(w_j) 
        = h_{-J-1}(w_{-J-1}) = h_{-J-1}(\overline{M})
        \geq h_{-J-1}(S_{-J-1}^+(\cU_h)) = \cU_h \geq h_j(M),
    \]

    from which we deduce, by monotonicity, that $w_j \geq M$. The claim is proved.

    \paragraph{Claim 2:} $(w_j)_j$ is bounded from above. Once again, we use that $(h_j(w_j))_j$ is constant. For all $j \in \Z$, 
    \[
        \begin{aligned}
            h_j(w_j) = h_{-J-1}(\overline{M}) 
            & \implies w_j = S_j^+(h_{-J-1}(\overline{M})) \\
            & \implies w_j = S_j^+ \left(H(-X, \overline{M}) \right) \\
            & \implies w_j \leq H(-X, \overline{M}) + \sup_{\underset{\abs{v} \leq 1}{x \in \R}} L(x, v)\coloneq \overline{u} 
            \quad (\text{Lemma} \; \ref{lmm:Legendre} \; (ii)).
        \end{aligned}
    \]
    
    We see that $\overline{u}$ only depends on $H$ and $M$. Let us precise that one can choose 
    \[
        \underline{u} \coloneq -H(-X, \overline{m}) - \sup_{\underset{\abs{v} \leq 1}{x \in \R}} L(x, v), \quad
        \overline{m} \coloneq - u_h - \sup_{\underset{\abs{v} \leq 1}{x \in \R}} L(x, v), 
        \quad
        u_h \coloneq \max_{x \in \R} H(x, m).
    \]
\end{proof}

\begin{proofof}{\ref{item:Stability}}
    With reference to Lemma \ref{lmm:CompactHetero}, fix $(m, M) \in \R^2$ 
    satisfying $m \leq \inf_{\R} \alpha$ and $M \geq \sup_{\R} \alpha$, such that 
    for a.e. $x \in \R$, $m \leq u_o(x) \leq M$. Consider $(v_{j})_j$ and $(w_j)_j$, 
    steady states given by Lemma \ref{lmm:SteadyStateBis}.

    Define the piecewise constant functions 
    \[
        (v_\Delta, w_\Delta) \coloneq \sum_{j \in \Z} (v_{j}, w_j) \1_{I_j},
    \]

    and the constants $\underline{u}$, $\overline{u}$ as they were defined in the proof of 
    Lemma \ref{lmm:SteadyStateBis}.

    The key point is that $v_\Delta$ and $w_\Delta$ are both exact stationary solutions to 
    \[
        \p_t \cU(t, x) + \p_x \bigl(H(\xi_\Delta(x), \cU(t, x)) \bigr) = 0. 
    \]

    Therefore, for all $n \in \N$, recalling the definition of $\cU^n$, 
    \ref{item:StabilityDiscontinuous} of Theorem \ref{th:WellPosednessDiscontinuous} ensures that 
    \[
        v_\Delta \leq u^n \leq w_\Delta
        \implies \forall t \in [t^n, t^{n+1}], \quad 
        v_\Delta \leq \cU^n(t) \leq w_\Delta.
    \]

    Since by construction of $(v_{j})_{j \in \Z}$ and $(w_j)_{j \in \Z}$, we have $v_\Delta \leq \rho^0 \leq w_\Delta$, a direct induction provides:
    \[
        \forall t \in \R^+, \quad v_\Delta \leq u_\Delta(t) \leq w_\Delta,
    \]

    immediately leading to \eqref{eq:StabilityScheme} by definition of $\underline{u}$ and $\overline{u}$. The proof is concluded.
\end{proofof}

\subsection{Convergence of the scheme}
\label{ssec:Convergence}

This section is devoted to the proof of the convergence of the scheme. First, we establish the strong compactness of the approximate solution using the compensated compactness method. Then, in Section \ref{ssec:Convergence}, we prove that the scheme verifies discrete entropy inequalities and pass to the limit using the pre-established compactness.

Below, the constants $\underline{u}, \overline{u}$ refer to the ones from \ref{item:Stability} of Theorem \ref{th:main}.

\subsubsection{Compensated compactness}
\label{sssec:CompComp}

The compensated compactness method and its applications to systems of conservation laws 
is for instance reviewed in \cite{Chen2000, LuBook}. Modified for our use, the 
compensated compactness lemma reads as follows. This section culminates with Corollary \ref{cor:CompComp}, stating the compactness result.

\begin{lemma}
    \label{lmm:CompComp}
    Assume \eqref{eq:smoothness}--\eqref{eq:LocalizedSpace}--\eqref{eq:convexity} hold, 
    ensuring that $H$ is genuinely nonlinear: 
    \begin{equation}
        \label{eq:FluxGNL}
        \forall (x,s) \in \R^2, \quad \text{meas} \bigl(\left\{p \in \R \; : \; 
        \p_u H(x, p) = s \right\} \bigr) = 0.
    \end{equation}

    Let $(u_\eps)_\eps$ be a bounded sequence of $\L{\infty}(\open{0}{+\infty} \times \R, \R)$ such that for all $k \in \R$ and for any $i \in \{1, 2\}$, the sequence $(\p_t S_i (u_\eps) + \p_x Q_i (x, u_\eps))_\eps$ belongs to a compact subset of $\Hloc{-1}(\open{0}{+\infty} \times \R, \R)$, where 
    \begin{equation}
        \label{eq:CompCompEntropies}
        \begin{array}{cc}
            S_1(u) \coloneq u - k & Q_1(x, u) \coloneq H(x, u) - H(x, k) \\[5pt]
            S_2(u) \coloneq H(x, u) - H(x, k) & 
            Q_2(x, u) \coloneq \int_k^u \p_u H(x, \xi)^2 \d{\xi}.
        \end{array} 
    \end{equation}
    
    Then there exists a subsequence of $(u_\eps)_\eps$ that converges in 
    $\Lloc{p}(\open{0}{+\infty} \times \R, \R)$ for all $p \in [1, +\infty \mathclose[$ and a.e. on $\open{0}{+\infty} \times \R$ to some function $u \in \L{\infty}(\open{0}{+\infty} \times \R, \R)$.
\end{lemma}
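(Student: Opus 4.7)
The plan is to apply the classical Tartar--Murat compensated compactness framework, adapted to the $x$-dependence of the entropy fluxes. Since $(u_\eps)_\eps$ is uniformly bounded in $\L{\infty}$, the fundamental theorem of Young measures delivers, up to extraction of a subsequence still denoted $(u_\eps)_\eps$, a family of probability measures $(\nu_{t,x})_{(t,x)}$ on $\R$, all supported in a common compact interval, such that for every Carath\'eodory function $g = g(t,x,u)$ one has $g(t,x,u_\eps) \rightharpoonup \int_\R g(t,x,u) \d{\nu_{t,x}(u)}$ in the weak-$\ast$ topology of $\L{\infty}$. My target is to show that $\nu_{t,x}$ is a Dirac mass for a.e. $(t,x)$, which, combined with the uniform $\L{\infty}$ bound, classically upgrades to strong $\Lloc{p}$ convergence of $(u_\eps)_\eps$ for every $p \in [1, +\infty \mathclose[$ and to pointwise a.e.\ convergence on $\open{0}{+\infty} \times \R$.

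To force Dirac-ness, I would fix $k \in \R$ and apply the div-curl lemma in the $(t,x)$-plane to the vector fields
\[
U_\eps^k \coloneq (S_1(u_\eps),\; Q_1(x, u_\eps)) \quad \text{and} \quad V_\eps^k \coloneq (-Q_2(x, u_\eps),\; S_2(x, u_\eps)).
\]
Both sequences are bounded in $\Lloc{\infty}$, while $\mathrm{div}_{(t,x)} U_\eps^k = \p_t S_1 + \p_x Q_1$ and $\mathrm{curl}_{(t,x)} V_\eps^k = \p_t S_2 + \p_x Q_2$ lie in compact subsets of $\Hloc{-1}$ by hypothesis. Passing to the limit in the scalar product $U_\eps^k \cdot V_\eps^k$ yields, for a.e. $(t,x)$ and every $k \in \R$, the Tartar identity
\[
\langle \nu_{t,x}, S_2 Q_1 - S_1 Q_2 \rangle = \langle \nu_{t,x}, S_2 \rangle \langle \nu_{t,x}, Q_1 \rangle - \langle \nu_{t,x}, S_1 \rangle \langle \nu_{t,x}, Q_2 \rangle.
\]
Setting $b_x(u) \coloneq H(x,u)$ and $c_x(u) \coloneq \int_0^u (\p_u H(x, \xi))^2 \d{\xi}$, so that $c_x' = (b_x')^2$, a short algebraic computation in which the parameter $k$ cancels rewrites the identity equivalently as
\[
\iint_{\R^2} (b_x(u) - b_x(v))^2 \; \d{\nu_{t,x}(u)} \d{\nu_{t,x}(v)} = \iint_{\R^2} (u-v)(c_x(u) - c_x(v)) \; \d{\nu_{t,x}(u)} \d{\nu_{t,x}(v)}.
\]

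The concluding step exploits the pointwise Cauchy--Schwarz inequality $(b_x(u) - b_x(v))^2 \leq (u-v)(c_x(u) - c_x(v))$, valid for $u \geq v$ since $c_x' = (b_x')^2$, where equality holds iff $\p_u H(x, \cdot)$ is constant on the interval $[v, u]$. The integrated equality above therefore saturates Cauchy--Schwarz for $\nu_{t,x} \otimes \nu_{t,x}$-a.e. pair $(u,v)$. The genuine nonlinearity assumption \eqref{eq:FluxGNL} excludes $\p_u H(x, \cdot)$ from being constant on any non-degenerate interval for a.e. $x$, which forces the support of $\nu_{t,x}$ to reduce to a single point for a.e. $(t,x)$. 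The main technical hurdle I anticipate is the reduction of the raw Tartar identity to the clean Cauchy--Schwarz form above: the $x$-dependence of $b_x$ and $c_x$ makes the bookkeeping heavier than in the standard Tartar--Murat setting but introduces no essential obstruction, since the identity is handled pointwise in $x$.
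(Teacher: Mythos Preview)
Your argument is correct and is precisely the classical Tartar proof (Young measures, div--curl lemma, reduction to a Cauchy--Schwarz equality saturated by genuine nonlinearity) adapted to the $x$-dependent entropies. The paper, however, does not actually give a proof of this lemma: it is stated as a known result, with pointers to the compensated compactness literature \cite{Chen2000, LuBook}, and the paper's effort goes instead into verifying the $\Hloc{-1}$ compactness hypothesis for the scheme. Your proposal therefore supplies the argument the paper outsources, and follows exactly the route one finds in those references; in particular, the $x$-dependence of $S_2$ and of the fluxes $Q_i$ is indeed harmless since the Young measure reduction is carried out pointwise in $(t,x)$, as you observe. One small remark: Dirac-ness of $\nu_{t,x}$ gives local convergence in measure (hence $\Lloc{p}$ via the uniform $\L{\infty}$ bound), and a.e.\ convergence only along a further subsequence --- but since the statement merely asks for the existence of a convergent subsequence, this is immaterial.
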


The main step of the compensated compactness method is to prove that two entropy productions are compact in $\Hloc{-1}(\open{0}{+\infty} \times \R, \R)$. It can be obtained using the following technical result, see \cite{Murat1981, DCP1987, SerreBook}.

\begin{lemma}
    \label{lmm:CompCompTechnical}
    Let $q, r \in \R$ such that $1 < q < 2 < r$. Let $(\mu_\eps)_\eps$ be a sequence of 
    distributions such that:
    
    (i) $(\mu_\eps)_\eps$ belongs to a compact subset of 
    $\Wloc{-1}{q}(\open{0}{+\infty} \times \R, \R)$.

    (ii) $(\mu_\eps)_\eps$ is bounded in $\Wloc{-1}{r}(\open{0}{+\infty} \times \R, \R)$.

    Then $(\mu_\eps)_\eps$ belongs to a compact subset of 
    $\Hloc{-1}(\open{0}{+\infty} \times \R, \R)$.
\end{lemma}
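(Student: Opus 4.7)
The plan is to extract a subsequence that already converges in the weaker space $\Wloc{-1}{q}$ and then upgrade the mode of convergence to $\Hloc{-1}$ by an interpolation inequality exploiting the uniform bound in $\Wloc{-1}{r}$. The crucial arithmetic is that since $1 < q < 2 < r$, there exists $\theta \in \open{0}{1}$ with
\[
    \frac{1}{2} = \frac{\theta}{q} + \frac{1-\theta}{r},
\]
so that $\W{-1}{2} = \H{-1}$ is an interpolation space between $\W{-1}{q}$ and $\W{-1}{r}$.

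First I would observe that, by assumption \textit{(i)}, from any subsequence of $(\mu_\eps)_\eps$ one can extract a further subsequence converging in $\Wloc{-1}{q}(\open{0}{+\infty} \times \R, \R)$ to some distribution $\mu$. To prove the desired relative compactness in $\Hloc{-1}$, it is enough to show that this $\Wloc{-1}{q}$-convergent subsequence also converges in $\Hloc{-1}$. Fix any compact set $K \subset \open{0}{+\infty} \times \R$. From assumption \textit{(ii)}, the sequence $(\mu_\eps)_\eps$ is bounded in $\W{-1}{r}(\Omega)$ for some open neighbourhood $\Omega$ of $K$; since $r > 1$, the space $\W{-1}{r}(\Omega)$ is reflexive and a weak-$*$ compactness argument identifies the weak limit with $\mu$, showing $\mu \in \W{-1}{r}(\Omega)$. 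Consequently $\nu_\eps \coloneq \mu_\eps - \mu$ is bounded in $\W{-1}{r}(\Omega)$ and converges to $0$ in $\W{-1}{q}(\Omega)$.

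The core step is the interpolation inequality: for $\theta$ as above, there exists $C_K > 0$ such that
\[
    \norm{f}_{\H{-1}(K)} \leq C_K \norm{f}_{\W{-1}{q}(\Omega)}^\theta \norm{f}_{\W{-1}{r}(\Omega)}^{1-\theta}
\]
for every $f$ in $\W{-1}{r}(\Omega)$. I would deduce this from the standard fact that $\W{-1}{p}(\Omega)$ is realized as the space of sums of a $\L{p}$ function and the distributional derivative of another $\L{p}$ function, combined with the interpolation identity $[\L{q}, \L{r}]_\theta = \L{2}$; alternatively, by duality with $[\W{1}{q'}, \W{1}{r'}]_{1-\theta} = \W{1}{2}$ applied to test functions supported in $\Omega$ (with an appropriate cutoff to pass from the full domain to the compact $K$). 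Applying the estimate to $\nu_\eps$, the first factor tends to $0$ while the second remains bounded, so $\nu_\eps \to 0$ in $\H{-1}(K)$. Since $K$ was arbitrary, this gives convergence in $\Hloc{-1}(\open{0}{+\infty} \times \R, \R)$, and hence relative compactness of the original sequence there.

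The main obstacle is a clean statement and justification of the interpolation inequality on a bounded open set with negative-order Sobolev spaces: one must handle the localization carefully (cutoff functions, reduction to $\R^2$ via extension of $\L{p}$ representatives) so that no boundary contribution spoils the duality argument. Once that is in place, the remainder of the proof is a routine diagonal-subsequence argument and an application of the interpolation bound.
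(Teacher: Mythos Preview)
Your argument is correct and is precisely the standard route to this result: extract a $\Wloc{-1}{q}$-convergent subsequence, identify the limit via the reflexivity of $\W{-1}{r}$, and upgrade convergence through the interpolation inequality between $\W{-1}{q}$ and $\W{-1}{r}$ with intermediate exponent $2$. This is exactly the Murat interpolation lemma.

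Note, however, that the paper does not supply its own proof of this lemma: it is stated as a known technical tool with references to Murat, DiPerna--Chen, and Serre. So there is no ``paper's proof'' to compare against; your sketch simply fills in what those references contain. Your caveat about localization and the clean justification of interpolation for negative-order spaces on bounded domains is well placed---that is indeed the only point requiring care, and it is handled in the cited works either by duality with the $\W{1}{p'}$ scale or by lifting to $\L{p}$ via $(I-\Delta)^{-1/2}$ type representations.
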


In the remainder of the section, we verify that a generic entropy production of the scheme satisfies the assumptions of Lemma \ref{lmm:CompCompTechnical}. Let us stress that this step is nontrivial: it relies on the properties of the solutions to Riemann problems with discontinuous flux. For the analysis, we take inspiration from \cite{KMRTriangular, KMRWellBalanced}.

Let $(S, Q)$ be an entropy/entropy flux pair associated with $H$. 
The entropy dissipation of $u_\Delta$ is defined as
\begin{equation}
    \label{eq:EntropyDissipation}
    E_\Delta(\phi) \coloneq \int_{0}^{+\infty} \int_{\R} 
    S(u_\Delta) \p_t \phi + Q(\xi_\Delta, u_\Delta)  \p_x \phi \; \d{x} \d{t}, 
    \quad \phi \in \Cc{\infty}(\R^+ \times \R, \R).
\end{equation}

Fix $n \in \N$ and $j \in \Z$. Let us consider the entropy dissipation in 
$P_{j+1/2}^n \coloneq [t^n, t^{n+1}[ \times ]x_j, x_{j+1}[$. By integration by parts 
and the fact that $u_\Delta$ is the exact solution of a Riemann problem in 
$P_{j+1/2}^n$, we write:
\[
    \begin{aligned}
        \iint_{P_{j+1/2}^n} 
        & S(u_\Delta) \p_t \phi + Q(\xi_\Delta, u_\Delta) \p_x \phi \;\d{x} \d{t} \\
        & = \int_{x_j}^{x_{j+1}} S(u_\Delta(t^{n+1}-, x)) \phi(t^{n+1}, x) 
        -  S(u_\Delta(t^{n}, x)) \phi(t^{n}, x) \; \d{x} \\
        & + \int_{t^{n}}^{t^{n+1}} Q(x_{j+1}, u_{j+1}^n) \phi(t, x_{j+1}) 
        - Q(x_{j}, u_{j}^n) \phi(t, x_{j}) \; \d{t} \\
        & + \int_{t^{n}}^{t^{n+1}} [\![ \sigma S(u_\Delta) - 
        Q(\xi_\Delta, u_\Delta) ]\!]_y \; \phi(t, y(t)) \d{t} \\
        & + \int_{t^{n}}^{t^{n+1}} \biggl( Q(x_{j}, u_\Delta(t, x_{j+1/2}-)) 
        - Q(x_{j+1}, u_\Delta(t, x_{j+1/2}+)) \biggr) \phi(t, x_{j+1/2}) \d{t}.
    \end{aligned}
\]

With reference to Section \ref{ssec:DiscontinuousFlux}, in the right-hand side of the 
previous equality:

$\bullet$ The third integral is the contribution of the (eventual) classical shock in 
the solution. Hence, $\sigma$ is the shock velocity, given by the Rankine-Hugoniot 
condition, $y$ is the shock curve and 
\[
    [\![ \sigma S(u_\Delta) - Q(\xi_\Delta, u_\Delta) ]\!]_y
    \coloneq (\sigma S(u_\Delta) - Q(\xi_\Delta, u_\Delta))(t, y(t)+) 
    - (\sigma S(u_\Delta) - Q(\xi_\Delta, u_\Delta))(t, y(t)-).
\]

It is relevant to say that $\xi_\Delta$ is continuous along $\{x=y(t)\}$, equal to 
either $x_j$ or $x_{j+1}$, depending on the sign of $\sigma$. 

$\bullet$ The last integral is the contribution of the (eventual) stationary 
non-classical shock in the solution.

Taking the sum for $n \in \N$ and $j \in \Z$, we see that the entropy dissipation 
rewrites as
\begin{equation}
    \label{eq:EntropyDissipation2}
    \begin{array}{clr}
        E_\Delta(\phi) 
        & \ds{= -\int_{\R} S(u_\Delta(0, x)) \phi(0, x) \d{x}} 
        & \longleftarrow I_1(\phi) \\[5pt]
        & \ds{+ \sum_{n=1}^{+\infty} \int_{\R} 
        (S(u_\Delta(t^{n}-, x)) - S(u_\Delta(t^{n}, x))) 
        \phi(t^n, x) \d{x}} 
        & \longleftarrow I_2(\phi) \\[5pt]
        & \ds{+ \sum_{n=0}^{+\infty} \sum_{j \in \Z} \sum_{y} \int_{t^{n}}^{t^{n+1}} 
        [\![ \sigma S(u_\Delta) - Q(\xi_\Delta, u_\Delta) ]\!]_y \; 
        \phi(t, y(t)) \d{t}} 
        & \longleftarrow I_3(\phi) \\[5pt]
        & \ds{+ \sum_{n=0}^{+\infty} \sum_{j \in \Z} \int_{t^{n}}^{t^{n+1}} 
        \biggl( Q(x_{j}, u_\Delta(t, x_{j+1/2}-)) 
        - Q(x_{j+1}, u_\Delta(t, x_{j+1/2}+)) \biggr) \phi(t, x_{j+1/2}) \; \d{t}} 
        & \longleftarrow I_4(\phi).
    \end{array}
\end{equation}

\bigskip

As we previously explained, for all $n \in \N$ and $j \in \Z$, the sum $\sum_{y}$ in 
\eqref{eq:EntropyDissipation2} is either over an empty set, or over a singleton.
\bigskip

First, let us give a bound on the variation of the approximate solution across the 
discrete time levels.

\begin{lemma}
    \label{lmm:CompCompA}
    Assume \eqref{eq:smoothness}--\eqref{eq:LocalizedSpace}--\eqref{eq:convexity} hold. Let $T > 0$ and $R > 0$. Fix $N, K \in \N^*$ such that $T \in [t^N, t^{N+1} \mathclose[$ and $R \in I_{K}$. Then, there exists $c_1 > 0$ depending on $T$, $R$, $u_o$, and $H$ such that 
    \begin{equation}
        \label{eq:CompCompA}
        \begin{aligned}
            & \sum_{n=1}^{N} \sum_{\abs{j} \leq K} 
        \int_{I_j} \abs{u_\Delta (t^n, x) - u_\Delta(t^n-, x)}^2 \d{x} \leq c_1 \\
        \text{and} \quad & 
        \sum_{n=0}^{N} \sum_{\abs{j} \leq K} \sum_{y} \int_{t^{n}}^{t^{n+1}} 
        [\![ \sigma S(u_\Delta) - Q(\xi_\Delta, u_\Delta) ]\!]_y \; \d{t} \leq c_1.
        \end{aligned}
    \end{equation}
\end{lemma}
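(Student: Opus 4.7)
The plan is to derive both estimates in \eqref{eq:CompCompA} from a single application of the entropy identity \eqref{eq:EntropyDissipation2} with the convex quadratic pair $S(u) = u^2/2$ and associated flux $Q(x,u) = \int_0^u v\,\p_u H(x,v)\,\d{v}$, tested against a nonnegative cutoff $\phi(t,x) = \chi(t)\phi_R(x)$ equal to one on $[0,T]\times[-R,R]$ and compactly supported in $[0, T+1]\times[-R-1,R+1]$. By the $L^\infty$ bound \ref{item:Stability} and the $\Ck{1}$-smoothness of $H$, the quantities $E_\Delta(\phi)$ and $\abs{I_1(\phi)}$ in \eqref{eq:EntropyDissipation2} are bounded by a constant depending only on $T,R,u_o,H$.

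The term $I_2(\phi)$ reduces, thanks to the quadratic form of $S$ and the fact that $\phi(t^n,\cdot)$ is (essentially) constant on each cell, to exactly $\tfrac12\sum_n\sum_j\int_{I_j}\abs{u_\Delta(t^n-,x)-u_j^n}^2\,\phi(t^n,x)\,\d{x}$. This quantity is nonnegative by Jensen's inequality: the averaging step defining $u_j^n$ from $\cU^{n-1}(t^n-,\cdot)$ dissipates the strictly convex entropy $S$. Moreover, $I_3(\phi)\geq 0$ because each classical shock of the Riemann solution in $P_{j+1/2}^n$ satisfies Lax's entropy condition, producing $[\![ \sigma S-Q ]\!]_y\geq 0$ against the convex $S$, and $\phi\geq 0$. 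Consequently, once $\abs{I_4(\phi)}$ is controlled, the identity $I_2(\phi)+I_3(\phi)=E_\Delta(\phi)-I_1(\phi)-I_4(\phi)$ combined with the nonnegativity of $I_2,I_3$ yields simultaneously both inequalities of \eqref{eq:CompCompA} (with a constant differing from the bounds on $E_\Delta,I_1,I_4$ only by a factor of $2$, since $\phi\equiv 1$ on $[0,T]\times[-R,R]$).

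To bound $I_4(\phi)$, I exploit the compactness of heterogeneity \eqref{eq:LocalizedSpace}: for $\abs{j}\geq J+1$ one has $h_j=h_{j+1}$, hence $u_-=u_+$ at the interface and the integrand of $I_4$ vanishes identically. For the finitely many interfaces with $\abs{j}\leq J$, I decompose
\[
Q(x_j,u_-) - Q(x_{j+1},u_+) = \bigl[Q(x_j,u_-)-Q(x_j,u_+)\bigr] + \bigl[Q(x_j,u_+)-Q(x_{j+1},u_+)\bigr].
\]
The second bracket is uniformly $O(\Delta x)$ by $\Ck{1}$-regularity of $Q$ in $x$; summed over the $O(1/\Delta x)$ interfaces and integrated on $[0,T]$, it contributes $O(T)$. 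The first bracket, the entropy loss across the stationary non-classical shock, is handled via the germ description of Section~\ref{ssec:DiscontinuousFlux}: for $\cG_1$- and $\cG_2$-type traces, the Rankine-Hugoniot identity $h_j(u_-)=h_{j+1}(u_+)$ combined with the $O(\Delta x)$ variation of $h_j$ along $j$ forces $\abs{u_--u_+}$, and hence this bracket, to be small on average.

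The main technical obstacle lies in $\cG_3$-type traces, where $u_-$ and $u_+$ straddle the critical point $\alpha_j$ and can differ by $O(1)$ rather than $O(\Delta x)$. Here the strategy is not to bound $I_4$ in isolation, but to pair it with $I_3$: the structural analysis of the Riemann solver in Cases (I)--(IV) in Section~\ref{ssec:DiscontinuousFlux} shows that any $\cG_3$-SNS at an interface is accompanied by a classical shock on one side, whose entropy dissipation appears in $I_3$ with the opposite sign, so that the combination $I_3+I_4$ remains uniformly bounded. This cancellation between the non-classical interface shock and its companion classical shock is the crux of the proof; once secured, the conclusion follows directly from the global identity and the positivity observations above.
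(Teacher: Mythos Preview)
Your overall architecture matches the paper: choose the quadratic entropy $S(u)=u^2/2$, exploit the identity \eqref{eq:EntropyDissipation2}, observe that $I_2$ reduces to the projection error by the averaging definition of $u_j^n$, and use $I_2,I_3\geq 0$ together with a lower bound on $I_4$ to conclude. The gap is precisely in how you obtain that lower bound on $I_4$.

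Your case-by-case treatment of the interface traces does not close. For $\cG_1$ and $\cG_2$ you assert that $|u_--u_+|$ is ``small on average'', but near the critical points $\alpha_j,\alpha_{j+1}$ the inverse of $h_{j+1}$ has unbounded derivative, so the implication $|h_j(u_-)-h_{j+1}(u_-)|=O(\Delta x)\Rightarrow |u_--u_+|=O(\Delta x)$ fails; a naive $O(\sqrt{\Delta x})$ bound, summed over $O(1/\Delta x)$ interfaces and $O(1/\Delta t)$ time steps, diverges. For $\cG_3$ the proposed cancellation with the companion classical shock in $I_3$ is not substantiated by any computation, and more importantly is logically insufficient: even if you showed $I_3+I_4$ bounded, you could deduce the first estimate of \eqref{eq:CompCompA} (on $I_2$) but \emph{not} the second (on $I_3$), since that requires $-I_4\leq C$ alone.

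The paper avoids all of this by never trying to control $|u_--u_+|$. Instead it uses the $\L{1}$-dissipativity of the germ, Proposition~\ref{pp:FluxAJG}\,\textit{(ii)}: for the traces $(\gamma_L,\gamma_R)\in\cG_{j+1/2}$ and any $k$,
\[
\Phi(x_j,\gamma_L,k)-\Phi(x_{j+1},\gamma_R,k)\geq -\cR_{j+1/2}(k,k),
\]
and then Remark~\ref{rk:ConstantRemainderTerm} gives $\cR_{j+1/2}(k,k)\leq C\Delta x$ uniformly in $j,k$. This yields a lower bound on $I_4$ for Kru\v{z}kov entropies; one passes to the quadratic entropy by the standard superposition $S(u)=\text{affine}+\tfrac12\int S''(k)|u-k|\,\d{k}$ (the affine part contributes zero across the SNS by Rankine--Hugoniot). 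Summed over the $O(TR/(\Delta t\,\Delta x))$ interface--time pairs, the $O(\Delta t\cdot\Delta x)$ bound per pair gives $I_4\geq -C(T,R)$. With this in hand, $I_2\leq c_1$ and $I_3\leq c_1$ follow separately from $I_2+I_3=E_\Delta-I_1-I_4$ and the nonnegativity of each.

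In short: replace your germ-component casework and the speculative $I_3$/$I_4$ cancellation by the single inequality from Proposition~\ref{pp:FluxAJG}\,\textit{(ii)} together with the $O(\Delta x)$ bound on $\cR_{j+1/2}(k,k)$.
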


\begin{proof}
    For the purpose of this proof, let us choose $S(u) \coloneq \frac{u^2}{2}$ and 
    $Q$ its entropy flux. Let $(\phi_k)_k$ be a sequence of nonnegative 
    test functions that converges to $\phi \coloneq \1_{[0, T] \times [-R, R]}$. At the 
    limit $k \to +\infty$ in \eqref{eq:EntropyDissipation2}, we obtain:
    \[
        I_2(\phi) + I_3(\phi) = \int_{\abs{x} \leq R} \abs{u_\Delta(0, x)}^2 \d{x} - I_4(\phi) 
        \leq 2 R \norm{u_o}_{\L{\infty}(\R)}^2 - I_4(\phi).
    \]  

    Note that $I_3(\phi)$ is nonnegative because the (eventual) shock is classical 
    and therefore produces entropy. 

    Then, notice that by definition of $(u_j^n)_{j, n}$,
    \[
        \begin{aligned}
            I_2(\phi) 
            & = \frac{1}{2} \sum_{n=1}^{N} \sum_{\abs{j} \leq K} \int_{I_j} 
            u_\Delta(t^{n}-, x)^2 - u_\Delta(t^{n}, x)^2 \; \d{x} \\ 
            & = \frac{1}{2} \sum_{n=1}^{N} \sum_{\abs{j} \leq K} \int_{I_j} \biggl\{
            (u_\Delta(t^{n}-, x) - u_\Delta(t^{n}, x))^2
            +2 u_\Delta(t^{n}, x) (u_\Delta(t^{n}-, x)  - u_\Delta(t^{n}, x))
            \biggr\} \d{x} \\
            & = \frac{1}{2} \sum_{n=1}^{N} \sum_{\abs{j} \leq K} \int_{I_j} 
            (u_\Delta(t^{n}-, x) - u_\Delta(t^{n}, x))^2 \d{x}
            + \sum_{n=1}^{N} \sum_{\abs{j} \leq K} u_j \underbrace{\int_{I_j}
            (u_\Delta(t^{n}-, x)  - u_j^n) \d{x}}_{=0}.
        \end{aligned}
    \]

    Finally, as a consequence of Proposition \ref{pp:FluxAJG} \textit{(ii)}, for all 
    $k \in [\underline{u}, \overline{u}]$,
    \[
        \Phi(x_{j+1/2}-, u_\Delta(t, x_{j+1/2}-), k) 
        - \Phi(x_{j+1/2}+, u_\Delta(t, x_{j+1/2}+), k)
        \geq - \cR_{j+1/2}(k, k).
    \]

    Using Remark \ref{rk:ConstantRemainderTerm}, we can bound this remainder term as 
    \[
        \cR_{j+1/2}(k, k) 
        \leq \bigl(2 \norm{\alpha'}_{\L{\infty}(\R)} \sup_{\underset{\underline{u} \leq p \leq \overline{u}}{x \in \R}} \abs{\p_u H(x, p)} 
        + \sup_{\underset{\underline{u} \leq p \leq \overline{u}}{x \in \R}} \abs{\p_x H(x, p)} \bigr) \Delta x.
    \]
    
    Using an approximation argument to pass from Kruzhkov entropies to any entropy, 
    see \cite{KMRTriangular}, we obtain 
    \[
        I_4(\phi) 
        \geq - 2TR \bigl(2 \norm{\alpha'}_{\L{\infty}(\R)} \sup_{\underset{\underline{u} \leq p \leq \overline{u}}{x \in \R}} \abs{\p_u H(x, p)} 
        + \sup_{\underset{\underline{u} \leq p \leq \overline{u}}{x \in \R}} \abs{\p_x H(x, p)} \bigr).
    \]

    Estimate \eqref{eq:CompCompA} follows by putting the bounds on $I_1(\phi)$ and 
    $I_4(\phi)$ together.
\end{proof}

We can convert \eqref{eq:CompCompA} into an estimate on the spatial variation of the 
approximate solutions, following the arguments of  \cite[Page 67]{DiPerna1983}. 
For the sake of clarity, we set 
$\rho_{j+1/2}^{n, \pm} \coloneq u_\Delta(t^n, x_{j+1/2} \pm)$.

\begin{lemma}
    \label{lmm:CompCompB}
    Assume \eqref{eq:smoothness}--\eqref{eq:LocalizedSpace}--\eqref{eq:convexity} hold.
    Let $T > 0$ and $R> 0$. Fix $N, K \in \N^*$ such that 
    $T \in [t^N, t^{N+1} \mathclose[$ and $R\in I_{K}$. Then, there exists $c_2> 0$ depending on $T$, $R$, $u_o$ and $H$ such that 
    \begin{equation}
        \label{eq:CompCompB}
        \sum_{n=0}^{N} \sum_{\abs{j} \leq K} \left\{ 
            (\rho_{j+1/2}^{n, -} - \rho_{j+1/2}^{n, +})^2 
            + (u_{j}^{n} - \rho_{j+1/2}^{n, -})^2 
            + (u_{j+1}^{n} - \rho_{j+1/2}^{n, +})^2 \right\} \Delta x \leq c_2. 
    \end{equation}
\end{lemma}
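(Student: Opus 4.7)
The plan is a DiPerna-type conversion \cite{DiPerna1983} of the temporal variance bound \eqref{eq:CompCompA} into the spatial squared-jumps bound \eqref{eq:CompCompB}, augmented by a germ-based argument to handle the stationary non-classical shock term $(\rho_{j+1/2}^{n,-} - \rho_{j+1/2}^{n,+})^2$, which is specific to the discontinuous-flux setting and has no counterpart in the classical DiPerna analysis.

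First I would describe $\cU^n(t^{n+1}-, \cdot)$ on each cell $I_j$. Under the CFL condition \eqref{eq:CFL}, the Riemann fans emanating from $x_{j\pm 1/2}$ stay confined to $[x_{j-1/2}, x_{j-1/2} + L\Delta t]$ and $[x_{j+1/2} - L\Delta t, x_{j+1/2}]$ respectively, and do not interact inside $I_j$. The restriction of $\cU^n(t^{n+1}-, \cdot)$ to $I_j$ is therefore the juxtaposition of a monotone ``incoming'' classical wave from $\rho_{j-1/2}^{n,+}$ to $u_j^n$, a constant middle equal to $u_j^n$, and a monotone ``outgoing'' classical wave from $u_j^n$ to $\rho_{j+1/2}^{n,-}$, with the structure described in Section \ref{ssec:DiscontinuousFlux}. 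Since $u_j^{n+1}$ is the $I_j$-average of this function, an elementary variance computation on this three-piece structure, combined with the classical-shock dissipation estimate (second inequality of \eqref{eq:CompCompA}) to absorb the cubic corrections arising when wave speeds degenerate to zero, yields a lower bound of the form
\[
    \int_{I_j} \abs{\cU^n(t^{n+1}-, x) - u_j^{n+1}}^2 \d{x} + D_j^n \geq c\, \Delta x \left\{ (u_j^n - \rho_{j+1/2}^{n,-})^2 + (\rho_{j-1/2}^{n,+} - u_j^n)^2 \right\},
\]
where $D_j^n$ is the classical-shock dissipation in $P_{j+1/2}^n$ and $c>0$ depends only on $\lambda$, $L$ and the bounds $\underline{u}, \overline{u}$. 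Summing in $(n, j)$ and invoking \eqref{eq:CompCompA} then controls the first and third terms of \eqref{eq:CompCompB}.

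For the remaining stationary shock term, I would exploit the Rankine-Hugoniot identity $h_j(\rho_{j+1/2}^{n,-}) = h_{j+1}(\rho_{j+1/2}^{n,+})$ combined with the consistency estimate of Lemma \ref{lmm:ConsistencyInterfaceFlux}, which yield $\abs{h_j(\rho_{j+1/2}^{n,-}) - h_j(\rho_{j+1/2}^{n,+})} \leq \mu \Delta x$ uniformly in $n, j$. On the monotone germ branches $\cG_1, \cG_2$, strict convexity of $h_j$ translates this into $\abs{\rho^- - \rho^+} \leq C \sqrt{\Delta x}$, so that $(\rho^- - \rho^+)^2 \Delta x \leq C \Delta x^2$; summed over the $O(X/\Delta x)$ interfaces in $[-X, X]$ (outside which \eqref{eq:LocalizedSpace} forces this term to vanish) and over the $O(T/\Delta t)$ time steps, this contribution is bounded. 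On the sonic branch $\cG_3$, where the jump across the shock may be $O(1)$, I would resort to the triangle inequality $\abs{\rho^- - \rho^+} \leq \abs{\rho^- - u_j^n} + \abs{u_j^n - u_{j+1}^n} + \abs{\rho^+ - u_{j+1}^n}$, reducing the estimate to the already-controlled classical wave terms plus an auxiliary bound on $\sum_{n,j}(u_{j+1}^n - u_j^n)^2 \Delta x$, itself derived from the cell-average identity \eqref{eq:MF2} and \eqref{eq:CompCompA}.

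The main obstacle is the sonic regime: the germ permits $O(1)$ stationary jumps even when $h_j, h_{j+1}$ differ only by $O(\Delta x)$, so flux consistency alone does not suffice, and the variance inequality degenerates precisely where the classical wave speeds vanish. The triangle-inequality reduction described above, combined with the compact heterogeneity assumption \eqref{eq:LocalizedSpace} (which confines non-trivial germ contributions to the finite collection of interfaces in $[-X, X]$) and the judicious use of both inequalities of \eqref{eq:CompCompA} simultaneously, is what closes the argument and ensures that $c_2$ depends only on $T, R, u_o, H$.
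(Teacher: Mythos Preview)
Your overall DiPerna-type strategy matches the paper's approach (the paper itself gives no details and simply cites \cite[Page~67]{DiPerna1983}), and you are right that the stationary non-classical shock term $(\rho_{j+1/2}^{n,-}-\rho_{j+1/2}^{n,+})^2$ deserves separate attention: the SNS sits at the cell boundary, not inside $I_j$, so the cell-variance argument only delivers the second and third terms of \eqref{eq:CompCompB} directly. Your treatment of the $\cG_1\cup\cG_2$ branches via flux consistency and uniform strict convexity (which is available here because $\p_{uu}^2 H$ is continuous and positive on the compact $[-X,X]\times[\underline u,\overline u]$) is sound.

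The gap is in your handling of $\cG_3$. Your triangle-inequality reduction leads to the auxiliary claim $\sum_{n,j}(u_{j+1}^n-u_j^n)^2\,\Delta x\le C$ ``derived from \eqref{eq:MF2} and \eqref{eq:CompCompA}'', but this is circular: when $(u_j^n,u_{j+1}^n)\in\cG_3$ one has $\rho_{j+1/2}^{n,-}=u_j^n$ and $\rho_{j+1/2}^{n,+}=u_{j+1}^n$, so the quantity you want to bound \emph{is} the first term of \eqref{eq:CompCompB} at that interface, and neither \eqref{eq:MF2} nor the variance/dissipation bounds of \eqref{eq:CompCompA} yield it without already invoking \eqref{eq:CompCompB}. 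The compact-heterogeneity restriction to $[-X,X]$ does not help either, since $\cG_3$-type stationary jumps occur in the homogeneous region as well (there they are exactly the classical stationary shocks of $h_j=h_{j+1}$).

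The cleaner route is to observe that the $\cG_3$ SNS is \emph{entropy-dissipative} for $S(u)=u^2/2$: writing $F:=h_j(\rho^-)=h_{j+1}(\rho^+)$ and using $\rho^->\alpha_j>\rho^+$ (up to $O(\Delta x)$),
\[
Q(x_j,\rho^-)-Q(x_{j+1},\rho^+)=\int_{\rho^+}^{\rho^-}\bigl(F-h_j(s)\bigr)\,\d s+O(\Delta x)\;\ge\; c\,(\rho^--\rho^+)^3-O(\Delta x),
\]
with $c>0$ from the uniform lower bound on $\p_{uu}^2 H$. Thus the $\cG_3$ SNS behaves exactly like a classical stationary shock and its cubic dissipation can be absorbed into the second estimate of \eqref{eq:CompCompA} (coming from $I_4$ rather than $I_3$ in the decomposition \eqref{eq:EntropyDissipation2}, but bounded by the same mechanism). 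Once this is done, the standard DiPerna combination of the variance bound with the cubic shock/SNS dissipation closes the estimate uniformly over all germ branches, without any separate $\cG_3$ bookkeeping.
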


We are now in position to prove the $\Hloc{-1}$ compactness of the sequence of measures 
that defines the entropy dissipation.

\begin{lemma}
    \label{lmm:CompCompC}
    Assume that $H$ satisfies \eqref{eq:smoothness}--\eqref{eq:LocalizedSpace}--\eqref{eq:convexity}. 
    Let $(S_i, Q_i)_{i \in \{1, 2\}}$ be the entropy/entropy flux pairs defined in 
    Lemma \ref{lmm:CompComp}. Then for any $i \in \{1, 2\}$, the sequence of 
    distributions $\mu_i$ defined by 
    \[
        \mu_i(\phi) \coloneq \int_{0}^{+\infty} \int_{\R} 
        S_i(u_\Delta) \p_t \phi + Q_i(x, u_\Delta)  \p_x \phi \; \d{x} \d{t}, 
        \quad \phi \in \Cc{\infty}(\R^+ \times \R, \R),
    \]
    
    belongs to a compact subset of $\Hloc{-1}(\open{0}{+\infty} \times \R, \R)$.
\end{lemma}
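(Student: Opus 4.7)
The plan is to apply the technical Lemma \ref{lmm:CompCompTechnical}, which reduces the problem to exhibiting exponents $1 < q < 2 < r$ such that $(\mu_i)_\Delta$ is uniformly bounded in $\Wloc{-1}{r}(\open{0}{+\infty} \times \R, \R)$ and relatively compact in $\Wloc{-1}{q}(\open{0}{+\infty} \times \R, \R)$. The boundedness part is essentially free: by \ref{item:Stability}, $u_\Delta$ is uniformly bounded in $\L{\infty}$, so $S_i(u_\Delta)$ and $Q_i(\xi_\Delta, u_\Delta)$ are uniformly bounded in $\Lloc{r}$ for every $r > 2$, and thus the time and space distributional derivatives whose sum defines $\mu_i$ are bounded in $\Wloc{-1}{r}$.

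For the compactness, the strategy is to decompose $\mu_i = \nu_i^\Delta + \eta_i^\Delta$, where $(\nu_i^\Delta)_\Delta$ is uniformly bounded in the space of locally finite Radon measures on $\open{0}{+\infty}\times\R$ and $(\eta_i^\Delta)_\Delta$ converges strongly to $0$ in $\Wloc{-1}{1}$. In the two space-time dimensions, the Sobolev embedding $\W{1}{q'}\hookrightarrow \Czero$ holds compactly for any $q' > 2$, and by duality bounded sets of locally finite Radon measures embed compactly into $\Wloc{-1}{q}$ for every $q\in(1,2)$. Combined with the $\Wloc{-1}{r}$ bound, a standard interpolation argument then upgrades the $\Wloc{-1}{1}$ convergence of $(\eta_i^\Delta)_\Delta$ to $\Wloc{-1}{q}$ convergence for an appropriate $q\in(1,2)$, so the decomposition delivers the desired relative compactness.

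To build the decomposition, I would expand $\mu_i$ as $I_1 + I_2 + I_3 + I_4$ using \eqref{eq:EntropyDissipation2} and treat each piece separately. The initial-trace term $I_1$ is a surface measure on $\{t=0\}$ of bounded mass thanks to the $\L{\infty}$ bound on $u_o$. The classical-shock term $I_3$ carries a definite sign coming from the Lax entropy condition (the entropies $S_i$ are smooth and can be treated as convex combinations of Kruzhkov entropies), and its mass on $[0,T]\times[-R,R]$ is already controlled by the constant $c_1$ of Lemma \ref{lmm:CompCompA}. The interface term $I_4$ is handled as in the proof of that lemma: Proposition \ref{pp:FluxAJG}\textit{(ii)} combined with Remark \ref{rk:ConstantRemainderTerm} yields an $\mathrm{O}(\Delta x)$ bound on $\cR_{j+1/2}(k,k)$, and the approximation argument of \cite{KMRTriangular} upgrades the Kruzhkov-type estimate to arbitrary smooth entropies; summing over $|j|\leq K$ and $n\leq N$ gives a uniform $\mathrm{O}(1)$ total variation. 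These three contributions are absorbed into $\nu_i^\Delta$.

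The main obstacle is the time-jump term $I_2$: a direct Cauchy--Schwarz applied to the $\L{2}$ bound of Lemma \ref{lmm:CompCompA} loses a factor $\sqrt{N}$ and therefore cannot produce a uniform $\L{1}$ bound. The key observation is that the projection \eqref{eq:MF1} enforces $\int_{I_j}(u_\Delta(t^n-,x) - u_j^n)\,\d{x} = 0$. Taylor expanding,
\[
    S_i(u_\Delta(t^n-,x)) - S_i(u_j^n) = S_i'(u_j^n)\bigl(u_\Delta(t^n-,x) - u_j^n\bigr) + \mathrm{O}\bigl((u_\Delta(t^n-,x) - u_j^n)^2\bigr),
\]
the quadratic remainder has total variation bounded by a multiple of $c_1$ (again by Lemma \ref{lmm:CompCompA}) and so goes into $\nu_i^\Delta$. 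In the linear term, replacing $\phi(t^n,x)$ by $\phi(t^n,x_j)$ makes the coefficient $S_i'(u_j^n)\phi(t^n,x_j)$ constant in $x$ on $I_j$, and the zero-mean identity annihilates the resulting integral. The replacement error is pointwise dominated by $\Delta x\,\|\p_x \phi\|_{\L{\infty}}|u_\Delta(t^n-,x) - u_j^n|$, which after Cauchy--Schwarz against Lemma \ref{lmm:CompCompA} is of order $\sqrt{\Delta x}$ in the duality with $\W{1}{\infty}$; hence this piece converges strongly to $0$ in $\Wloc{-1}{1}$ and joins $\eta_i^\Delta$. This completes the decomposition and yields the conclusion via Lemma \ref{lmm:CompCompTechnical}.
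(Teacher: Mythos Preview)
Your overall plan---apply Lemma~\ref{lmm:CompCompTechnical}, decompose via \eqref{eq:EntropyDissipation2}, and treat $I_1,I_3$ as bounded measures while splitting the projection term $I_2$ by Taylor expansion against Lemma~\ref{lmm:CompCompA}---is correct and matches the paper almost exactly (the paper controls the $I_2$ replacement error with a $\Ck{0,\alpha}$ norm, $\alpha>\tfrac12$, rather than by interpolation, but the effect is the same). One omission: you never perform the preliminary split $\mu_i(\phi)=\int(Q_i(x,u_\Delta)-Q_i(\xi_\Delta,u_\Delta))\,\p_x\phi\,\d x\,\d t+E_\Delta(\phi)$, which is the paper's first move; the correction is $\mathrm O(\Delta x)$ in $\L{\infty}$ and hence compact in $\Wloc{-1}{q}$, so this is minor.

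The genuine gap is in your treatment of $I_4$. Proposition~\ref{pp:FluxAJG}\textit{(ii)} applied with $(u_l,u_r)=(k,k)$ and $(k_l,k_r)=(\gamma_l,\gamma_r)\in\cG_{j+1/2}$ yields only the \emph{one-sided} bound $\Phi_{j+1}(\gamma_r,k)-\Phi_j(\gamma_l,k)\le\cR_{j+1/2}(k,k)=\mathrm O(\Delta x)$; after superposition this gives $I_4(\phi)\ge -C$ for $\phi\ge0$ (which is all Lemma~\ref{lmm:CompCompA} needs) but not a total-variation bound. Without an upper bound on each jump $[\![Q_i(\xi_\Delta,u_\Delta)]\!]_{x_{j+1/2}}$, the sum over $\sim R/\Delta x$ interfaces is uncontrolled. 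The paper does \emph{not} use the germ route here: for $Q_1$ the jump is computed directly from Rankine--Hugoniot as $|H(x_j,k)-H(x_{j+1},k)|=\mathrm O(\Delta x)$, while for $Q_2$ the jump is shown to be at most $L_H\,(\rho_{j+1/2}^{n,-}-\rho_{j+1/2}^{n,+})^2+\mathrm O(\Delta x)$, and the quadratic term is summed using Lemma~\ref{lmm:CompCompB}---an estimate you never invoke. There is a further obstruction to your shortcut: $S_2$ depends on $x$, so the Kruzhkov superposition measures on the two sides of the interface are different, and the ``approximation argument of \cite{KMRTriangular}'' does not apply cleanly across $x_{j+1/2}$.
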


\begin{proof}
    We follow the proofs of \cite[Lemma 5.5]{KMRTriangular} or 
    \cite[Lemma 4.5]{KMRWellBalanced}. To start, let us fix $(S, Q)$ a smooth 
    entropy/entropy flux pair.

    Let us fix a bounded open subset $\Omega \subset \open{0}{+\infty} \times \R$, say 
    $\Omega \subset [0, T] \times [-R, R]$ for some $T > 0$ and $R > 0$. 
    Call $N, K \in \N^*$ such that $T \in [t^N, t^{N+1}[$ and $R \in I_{K}$, and 
    finally, let $\phi \in \Cc{\infty}(\Omega, \R)$. We split $\mu$ as 
    \[
        \mu(\phi) = 
        \int_{0}^{+\infty} \int_{\R} (Q(x, u_\Delta) - Q(\xi_\Delta, u_\Delta)) 
        \p_x \phi \; \d{x} \d{t} + E_\Delta(\phi),
    \]

    where $E_\Delta(\phi)$ is given by \eqref{eq:EntropyDissipation}. 

    The definition of $\xi_\Delta$ and regularity of $H$ ensures that for any 
    $q_1 \in \mathopen]1, 2]$, the first term of the right-hand side is strongly 
    compact in $\W{-1}{q_1}(\Omega, \R)$. We now estimate $E_\Delta(\phi)$.

    First, observe that since $(u_\Delta)_\Delta$ is 
    bounded in $\L{\infty}(\Omega, \R)$, we have 
    \[
        \abs{E_\Delta(\phi)} \leq \bigl(
            \sup_{\underline{u} \leq p \leq \overline{u}} \abs{S(p)} 
        + \sup_{\underset{\underline{u} \leq p \leq \overline{u}}{x \in \R}} \abs{Q(x, p)}
        \bigr) \cdot \norm{\phi}_{\W{1}{\infty}(\Omega)},    
    \]

    implying that $(E_\Delta(\phi))_\Delta$ is bounded in $\Wloc{-1}{r}(\Omega, \R)$ 
    for any $r \in \open{2}{+\infty}$.

    Keeping the notations of \eqref{eq:EntropyDissipation2}, we bound $I_1(\phi)$ and 
    $I_3(\phi)$ as 
    \[
        \abs{I_1(\phi)} \leq \norm{S(u_o)}_{\L{1}([-R, R])} \; \norm{\phi}_{\L{\infty}(\Omega)}, 
        \quad I_3(\phi) \leq c_1 \norm{\phi}_{\L{\infty}(\Omega)},
    \]

    where we used Lemma \ref{lmm:CompCompA} for $I_3(\phi)$. Consequently, 
    $I_1(\phi)$ and $I_3(\phi)$ are bounded in the space $\mathcal{M}(\Omega, \R)$ of 
    bounded Radon measures, which is compactly embedded in $\W{-1}{q_2}(\Omega, \R)$ if 
    $q_2 \in \mathopen]1, 2 \mathclose[$. Therefore, $I_1(\phi)$ and $I_3(\phi)$ belong 
    to a compact subset of $\W{-1}{q_2}(\Omega, \R)$.

    Now, to estimate $I_2(\phi)$, split it as 
    $I_2(\phi) = I_{2,1} (\phi) + I_{2, 2}(\phi)$ where 
    \[
        \begin{aligned}
            I_{2, 1}(\phi) & \coloneq \sum_{n=1}^{N} \sum_{\abs{j} \leq K} \int_{I_j} 
            (S(u_\Delta(t^{n}-, x)) - S(u_j^n)) \phi(t^n, x_j) \d{x} \\
            I_{2, 2}(\phi) & \coloneq \sum_{n=1}^{N} \sum_{\abs{j} \leq K} \int_{I_j} 
            (S(u_\Delta(t^{n}-, x)) - S(u_j^n)) 
            (\phi(t^n, x) - \phi(t^n, x_j)) \d{x}.
        \end{aligned}
    \]

    We handle $I_{2,1}(\phi)$ by introducing an intermediary point $\theta_j^n \in \R$ 
    such that 
    \[
        S(u_\Delta(t^{n}-, x)) - S(u_j^n) 
        = S'(u_j^n) (u_\Delta(t^{n}-, x) - u_j^n) 
        + \frac{S''(\theta_j^n)}{2} (u_\Delta(t^{n}-, x) - u_j^n)^2.
    \]

    Taking into account the definition of $u_j^n$ and Lemma \ref{lmm:CompCompB}, we 
    obtain
    \[
        \abs{I_{2, 1}(\phi)} \leq \frac{c_1}{2} \sup_{\underline{u} \leq p \leq \overline{u}} S''(p) \cdot \norm{\phi}_{\L{\infty}(\Omega)},
    \]

    ensuring that $I_{2, 1}(\phi)$ belong to a compact subset of 
    $\W{-1}{q_2}(\Omega, \R)$.

    We continue by choosing $\alpha \in \mathopen]\frac{1}{2}, 1 \mathclose[$ and then 
    writing:
    \[
        \begin{aligned}
            \abs{I_{2,2}(\phi)} 
            & \leq \norm{\phi}_{\Ck{0, \alpha}(\Omega)} \Delta x^\alpha
            \sum_{n=1}^{N} \sum_{\abs{j} \leq K} \int_{I_j} 
            \abs{S(u_\Delta(t^{n}-, x)) - S(u_j^n)} \d{x}  \\
            & \leq \norm{\phi}_{\Ck{0, \alpha}(\Omega)} \Delta x^{\alpha-1}
            \left\{ \sum_{n=1}^{N} \sum_{\abs{j} \leq K} \left(\int_{I_j} 
            \abs{S(u_\Delta(t^{n}-, x)) - S(u_j^n)} \d{x} \right)^2 \right\}^{1/2}
            \left\{ \sum_{n=1}^{N} \sum_{\abs{j} \leq K} \Delta x^2 \right\}^{1/2} \\
            & \leq \sqrt{\frac{2TR}{\lambda}} \norm{\phi}_{\Ck{0, \alpha}(\Omega)} 
            \Delta x^{\alpha-1/2} \left\{ \sum_{n=1}^{N} \sum_{\abs{j} \leq K} \int_{I_j} 
            \abs{S(u_\Delta(t^{n}-, x)) - S(u_j^n)}^2 \d{x} \right\}^{1/2} \\
            & \leq \sqrt{\frac{2 T R c_1}{\lambda}} 
            \sup_{\underline{u} \leq p \leq \overline{u}} \abs{S'(p)} \cdot 
            \norm{\phi}_{\Ck{0, \alpha}(\Omega)} \; \Delta x^{\alpha-1/2} \qquad
            \text{Lemma \ref{lmm:CompCompA}.}
        \end{aligned}
    \]

    We now take advantage of the compact embedding 
    $\Ck{0, \alpha}(\Omega, \R) \subset \W{1}{p}(\Omega, \R)$, $p=2/(1-\alpha)$ 
    to be sure that $I_{2,2} (\phi)$ belongs to a compact subset of 
    $\W{-1}{q_3}(\Omega, \R)$, 
    $q_3\coloneq 2/(1+\alpha) \in \mathopen]1, \frac{4}{3} \mathclose[$.

    Finally, we estimate $I_4(\phi)$. Let $k \in [\underline{u}, \overline{u}]$ and 
    consider $(S_i, Q_i)_{i \in \{1, 2\}}$ the entropy/entropy flux pairs defined in 
    Lemma \ref{lmm:CompComp}.
    Because of the Rankine-Hugoniot condition,
    \[
        \abs{[\![ Q_{1}(\xi_\Delta, u_\Delta) ]\!]_{x_{j+1/2}}^n}
        = \abs{H(x_j, k) - H(x_{j+1}, k)}, 
    \]

    from which we deduce 
    \[
        \abs{I_{4,1}(\phi)}
        \leq 2TR \; \sup_{x \in \R} \abs{\p_x H(x, k)} 
        \cdot \norm{\phi}_{\L{\infty}(\Omega)},
    \]

    ensuring that $I_{4,1}(\phi)$ belongs to a compact subset of 
    $\W{-1}{q_2}(\Omega, \R)$. On the other hand,
    \[
        \begin{aligned}
            \abs{ [\![ Q_{2}(\xi_\Delta, u_\Delta) ]\!]_{x_{j+1/2}}^n}
            & = \int_{\rho_{j+1/2}^{n,+}}^{\rho_{j+1/2}^{n,-}} 
            \p_u H(x_j, \xi)^2 \d{\xi} 
            + \int_{k}^{\rho_{j+1/2}^{n,+}} \p_u H(x_j, \xi)^2
            - \p_u H(x_{j+1}, \xi)^2 \d{\xi} \\
            & \leq \int_{\rho_{j+1/2}^{n,+}}^{\rho_{j+1/2}^{n,-}} 
            \p_u H(x_j, \xi)^2 \d{\xi} + 2L (\overline{u} - \underline{u}) \sup_{\underset{\underline{u} \leq p \leq \overline{u}}{x \in \R}} 
            \abs{\p_{xu}^2 H(x, p)} \Delta x \\
            & \leq L \underbrace{\int_{\rho^{-}}^{\rho^{+}} 
            \abs{\p_u H(x_j, \xi)} \d{\xi}}_{Q_{2,1}} + 2L (\overline{u} - \underline{u}) \sup_{\underset{\underline{u} \leq p \leq \overline{u}}{x \in \R}} 
            \abs{\p_{xu}^2 H(x, p)} \Delta x,
        \end{aligned}
    \] 

    where $\rho^{-} \coloneq \min\{\rho_{j+1/2}^{n,-}, \rho_{j+1/2}^{n,+}\}$ and 
    $\rho^{+} \coloneq \max\{\rho_{j+1/2}^{n,-}, \rho_{j+1/2}^{n,+}\}$.
    If $u \mapsto \p_u H(x_j, u)$ does not change sign in $\open{\rho^-}{\rho^+}$, then 
    \[
        \begin{aligned}
            \abs{Q_{2, 1}} 
            & = \abs{H(x_j, \rho_{j+1/2}^{n,-}) - H(x_j, \rho_{j+1/2}^{n,+})} \\
            & \leq \underbrace{\abs{H(x_j, \rho_{j+1/2}^{n,-}) - H(x_{j+1}, \rho_{j+1/2}^{n,+})}}_{=0} 
            + \abs{H(x_{j+1}, \rho_{j+1/2}^{n,+}) - H(x_j, \rho_{j+1/2}^{n,+})} \\
            & \leq \sup_{\underset{\underline{u} \leq p \leq \overline{u}}{x \in \R}} 
            \abs{\p_x H(x, p)} \; \Delta x.
        \end{aligned}
    \]

    Otherwise, assume to fix the ideas that $\rho^- \leq \alpha_j \leq \rho^+$. 
    Then, setting 
    $L_H \coloneq \sup_{\underset{\underline{u} \leq p \leq \overline{u}}{x \in 
    \R}} \abs{\p_{uu}^2 H(x, p)}$ we write
    \[
        \begin{aligned}
            Q_{2, 1} & 
            = \int_{\rho^{-}}^{\rho^{+}} \abs{\p_u H(x_j, \xi) - \p_u H(x_j, \alpha_j)} 
            \d{\xi} \\
            & \leq L_H \int_{\rho^{-}}^{\rho^{+}} \abs{\xi - \alpha_j} \d{\xi} \\
            & = \frac{L_H}{2} \left((\rho^+ - \alpha_j)^2 
            + (\rho^- - \alpha_j)^2\right) \\
            & \leq L_H (\rho^+ - \rho^-)^2
            = L_H \; (\rho_{j+1/2}^{n,-} - \rho_{j+1/2}^{n,+})^2.
        \end{aligned}
    \]

    Taking advantage of Lemma \ref{lmm:CompCompB}, we conclude that 
    \[
        \begin{aligned}
            & \sum_{n=0}^{N} \Delta t \sum_{\abs{j} \leq J}  
            \abs{[\![ Q_{2}(\xi_\Delta, u_\Delta) ]\!]_{x_{j+1/2}}^n} \\
            & \leq L \sum_{n=0}^{N} \Delta t \sum_{\abs{j} \leq J}  
            \left\{L_H \; (\rho_{j+1/2}^{n,-} - \rho_{j+1/2}^{n,+})^2 + \sup_{\underset{\underline{u} \leq p \leq \overline{u}}{x \in \R}} \abs{\p_x H(x, p)} \; \Delta x 
            +  2 (\overline{u} - \underline{u}) \sup_{\underset{\underline{u} \leq p \leq \overline{u}}{x \in \R}} 
            \abs{\p_{xu}^2 H(x, p)} \Delta x \right\} \\
            & \leq L \left\{ L_H + \sup_{\underset{\underline{u} \leq p \leq \overline{u}}{x \in \R}} \abs{\p_x H(x, p)} 
            + 2 (\overline{u} - \underline{u}) \sup_{\underset{\underline{u} \leq p \leq \overline{u}}{x \in \R}} 
            \abs{\p_{xu}^2 H(x, p)} \right\} (\lambda c_2 + 2TR),
        \end{aligned}
    \]

    which ensures that $I_{4, 2}(\phi)$ belongs to a compact subset of 
    $\W{-1}{q_2}(\Omega, \R)$. 

    To summarize, for any $i \in \{1, 2\}$, $(\mu_i)_\Delta$ belongs to a compact 
    subset of $\Wloc{-1}{q}(\open{0}{+\infty} \times \R, \R)$, 
    $q \coloneq \min\{q_1, q_2, q_3\} < 2$. 
    Additionally, $(\mu_i)_\Delta$ is bounded in 
    $\Wloc{-1}{r}(\open{0}{+\infty} \times \R, \R)$ for any $r \in \open{2}{+\infty}$. 
    Lemma \ref{lmm:CompCompTechnical} applies to ensure that for any $i \in \{1, 2\}$, 
    $(\mu_i)_\Delta$ belongs to a compact subset of 
    $\Hloc{-1}(\open{0}{+\infty} \times \R, \R)$.
\end{proof}

\begin{corollary}
    \label{cor:CompComp}
    Assume that $H$ satisfies \eqref{eq:smoothness}--\eqref{eq:LocalizedSpace}--\eqref{eq:convexity}. 
    Fix $u_o \in \L{\infty}(\R, \R)$ and let $(u_\Delta)_\Delta$ be the sequence 
    generated by the scheme described in Section \ref{sec:TheScheme}.
    Then, there exists a limit function 
    $u \in \L{\infty}(\open{0}{+\infty} \times \R, \R)$ 
    such that along a subsequence, as $\Delta \to 0$ while satisfying \eqref{eq:CFL}, 
    $(u_\Delta)_\Delta$ converges in $\Lloc{p}(\open{0}{+\infty} \times \R, \R)$ for all $p \in [1, +\infty \mathclose[$ and a.e. on 
    $\open{0}{+\infty} \times \R$ to $u$. 
\end{corollary}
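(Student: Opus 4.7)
The plan is to apply the compensated compactness framework already built up in this section. Under the CFL condition \eqref{eq:CFL}, point \ref{item:Stability} of Theorem \ref{th:main} yields the uniform bound
\[
    \underline{u} \leq u_\Delta(t, x) \leq \overline{u} \quad
    \text{for a.e.} \; (t, x) \in \open{0}{+\infty} \times \R,
\]
so $(u_\Delta)_\Delta$ is a bounded sequence of $\L{\infty}(\open{0}{+\infty} \times \R, \R)$. This is the first hypothesis of Lemma \ref{lmm:CompComp}.

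For the second hypothesis, we need, for $i \in \{1, 2\}$, the distributions
\[
    \mu_i(\phi) \coloneq \int_{0}^{+\infty} \int_{\R}
    S_i(u_\Delta) \p_t \phi + Q_i(x, u_\Delta) \p_x \phi \; \d{x} \d{t}
\]
associated with the two entropy/entropy flux pairs in \eqref{eq:CompCompEntropies} to lie in a compact subset of $\Hloc{-1}(\open{0}{+\infty} \times \R, \R)$. But this is precisely the content of Lemma \ref{lmm:CompCompC}, which was established using the splitting \eqref{eq:EntropyDissipation2} of the entropy dissipation together with the variation estimates of Lemmas \ref{lmm:CompCompA} and \ref{lmm:CompCompB}, and concluded by an application of Lemma \ref{lmm:CompCompTechnical}.

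With the two hypotheses verified, Lemma \ref{lmm:CompComp} directly produces a subsequence of $(u_\Delta)_\Delta$ that converges in $\Lloc{p}(\open{0}{+\infty} \times \R, \R)$ for all $p \in [1, +\infty \mathclose[$ and almost everywhere on $\open{0}{+\infty} \times \R$ to a limit $u \in \L{\infty}(\open{0}{+\infty} \times \R, \R)$. The genuine nonlinearity \eqref{eq:FluxGNL} required by Lemma \ref{lmm:CompComp} is a consequence of the strong convexity \eqref{eq:convexity}, as already noted in the statement of Lemma \ref{lmm:CompComp}.

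There is essentially no obstacle at this stage: all the analytical difficulty has been absorbed into Lemmas \ref{lmm:CompCompA}, \ref{lmm:CompCompB}, and \ref{lmm:CompCompC}, and the corollary is just the assembly of the uniform $\L{\infty}$ bound from \ref{item:Stability} with the $\Hloc{-1}$ compactness from Lemma \ref{lmm:CompCompC} inside the abstract statement of Lemma \ref{lmm:CompComp}.
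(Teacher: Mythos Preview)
Your proof is correct and follows the same approach as the paper, which simply writes ``Follows from a combination of Lemma \ref{lmm:CompCompA} and Lemma \ref{lmm:CompCompC}.'' You have spelled out the argument in more detail: the uniform $\L{\infty}$ bound from \ref{item:Stability} and the $\Hloc{-1}$ compactness from Lemma \ref{lmm:CompCompC} feed directly into Lemma \ref{lmm:CompComp}, which is exactly the intended route.
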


\begin{proof}
    Follows from a combination of 
    Lemma \ref{lmm:CompCompA} and Lemma \ref{lmm:CompCompC}.
\end{proof}

\subsubsection{Convergence}

Equipped with compactness obtained in Corollary \ref{cor:CompComp}, we prove that the limit function is the entropy solution. We will need the following technical result. 

\begin{lemma}{\cite[Lemma 4.8]{KMRWellBalanced}}
    \label{lmm:SignTechnical}
    Let $\Omega \subset \open{0}{+\infty} \times \R$ be a bounded open set, 
    $g \in \L{1}(\Omega, \R)$ and suppose that $(g_\eps)_{\eps}$ converges a.e. on 
    $\Omega$ to $g$. Then, there exists $\cL \subset \R$, at most countable, such that 
    for any $k \in \R \backslash \cL$ 
    \[
        \sgn(g_\eps - k) \limit{\eps}{0} \sgn(g-k) \quad \text{a.e. in} \; \Omega.
    \] 
\end{lemma}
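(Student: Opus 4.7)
The plan is to observe that $\sgn$ is continuous on $\R \setminus \{0\}$, so the only way pointwise convergence $g_\eps(x) \to g(x)$ can fail to yield $\sgn(g_\eps(x) - k) \to \sgn(g(x) - k)$ is at points $x$ where $g(x) = k$. Thus the strategy is to define $\cL$ as the set of values $k$ whose level set in $g$ has positive Lebesgue measure, show $\cL$ is at most countable, and then for $k \notin \cL$ combine pointwise convergence with continuity of $\sgn$ off zero.

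More precisely, I would set
\[
    \cL \coloneq \{k \in \R \; : \; \abs{\{x \in \Omega \; : \; g(x) = k\}} > 0\}.
\]
The first step is to show $\cL$ is at most countable. For every integer $n \geq 1$, the set $\cL_n \coloneq \{k \in \R \; : \; \abs{\{g = k\}} \geq 1/n\}$ must be finite: indeed the level sets $\{g = k\}$ for distinct $k$ are pairwise disjoint subsets of the bounded set $\Omega$, so if $\cL_n$ were infinite one could sum infinitely many disjoint sets of measure $\geq 1/n$ inside $\Omega$, contradicting $\abs{\Omega} < +\infty$. Hence $\cL = \bigcup_{n \geq 1} \cL_n$ is at most countable.

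The second step is the pointwise argument. Fix $k \in \R \setminus \cL$. By definition of $\cL$, the set $\{x \in \Omega \; : \; g(x) = k\}$ has measure zero. Let $N_1$ be the negligible set where $g_\eps \to g$ fails, and $N_2 \coloneq \{g = k\}$. For every $x \in \Omega \setminus (N_1 \cup N_2)$ one has $g_\eps(x) - k \to g(x) - k$ with $g(x) - k \neq 0$; since $\sgn$ is continuous on $\R \setminus \{0\}$, we conclude $\sgn(g_\eps(x) - k) \to \sgn(g(x) - k)$. As $N_1 \cup N_2$ is negligible, the desired a.e. convergence on $\Omega$ follows. The only subtle point is the countability of $\cL$, which reduces to the standard fact that a measurable function on a set of finite measure has at most countably many level sets of positive measure; all other steps are immediate consequences of the continuity of $\sgn$ away from $0$.
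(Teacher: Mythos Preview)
Your argument is correct. The paper does not supply its own proof of this lemma; it merely cites \cite[Lemma 4.8]{KMRWellBalanced}, so there is no in-paper proof to compare against. Your approach---defining $\cL$ as the set of level values of $g$ with positive-measure preimage, using the finite measure of the bounded set $\Omega$ to show $\cL$ is at most countable, and then invoking continuity of $\sgn$ away from $0$---is the standard and natural way to establish the result. Note that the hypothesis $g \in \L{1}(\Omega, \R)$ is not actually used in your argument (measurability of $g$ and boundedness of $\Omega$ suffice), which is consistent with the statement being a purely measure-theoretic fact.
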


We can now prove our convergence statement.

\begin{proofof}{\ref{item:Convergence}}
    Let us split the proof into several steps. 

    \paragraph{Step 1.} Introduce the piecewise constant function: 
    $\ds{\rho_\Delta \coloneq \sum_{n=0}^{+\infty} \sum_{j \in \Z} 
    u_j^n \1_{[t^n, t^{n+1} \mathclose[ \times I_j}}$.   

    We claim that $\norm{u_\Delta - \rho_\Delta}_{\Lloc{2}(\R^+ \times \R)} \limit{\Delta}{0} 0$. Indeed, 
    for all $(t, x) \in [t^n, t^{n+1} \mathclose[ \times \open{x_{j+1/2}}{x_{j+1}}$,
    \[
        \abs{u_\Delta(t, x) - \rho_\Delta(t, x)}
        = \abs{\cU^n(t, x) - u_{j+1}^n} 
        \leq \abs{\rho_{j+1/2}^{n, +} - u_{j+1}^n},
    \]

    since $\cU^n$ is the solution of a Riemann problem with left state $u_j^n$ and 
    right state $u_{j+1}^n$ at $x = x_{j+1/2}$. Likewise, for all 
    $(t, x) \in [t^n, t^{n+1} \mathclose[ \times \open{x_j}{x_{j+1/2}}$,
    \[
        \abs{u_\Delta(t, x) - \rho_\Delta(t, x)}
        = \abs{\cU^n(t, x) - u_j^n} \leq \abs{\rho_{j+1/2}^{n, -} - u_{j}^n}.
    \]

    Therefore, by Lemma \ref{lmm:CompCompB}, for all $N, K \in \N^*$,
    \[
        \begin{aligned}
            & \sum_{n=0}^N \sum_{\abs{j} \leq K} 
            \int_{t^n}^{t^{n+1}} \int_{x_j}^{x_{j+1}} 
            \abs{u_\Delta(t, x) - \rho_\Delta(t, x)}^2 \d{x} \d{t} \\
            & \leq \frac{1}{2} \sum_{n=0}^N \sum_{\abs{j} \leq K} 
            \left\{ \abs{\rho_{j+1/2}^{n, +} - u_{j+1}^n}^2 
            + \abs{\rho_{j+1/2}^{n, -} - u_{j}^n}^2 \right\} \Delta x \Delta t
            \leq \frac{c_1}{2} \Delta t,
        \end{aligned} 
    \]

    proving the claim. The claim ensures that $(\rho_\Delta)_\Delta$ converges a.e. 
    on $\open{0}{+\infty} \times \R$ to $u$.

    \paragraph{Step 2: Discrete entropy inequalities.} Let 
    $k \in [\underline{u}, \overline{u}]$. Under the CFL condition 
    \eqref{eq:CFL}, we derive the following discrete entropy inequalities, consequence 
    of the monotonicity of the scheme:
    \begin{equation}
        \label{eq:DEI}
        \begin{aligned}
            & \left(\abs{u_j^{n+1} - k} - \abs{u_j^n - k} \right) \Delta x 
            + (\Phi_{j+1/2}^n - \Phi_{j-1/2}^n) \Delta t \\
            & \leq - \sgn(u_j^{n+1} - k) (\fint^{j+1/2}(k, k) - 
            \fint^{j-1/2}(k, k)) \Delta t,
        \end{aligned}
    \end{equation}

    where 
    \[
        \Phi_{j+1/2}^n\coloneq \fint^{j+1/2}(u_j^n \vee k, u_{j+1}^n \vee k) 
        - \fint^{j+1/2}(u_j^n \wedge k, u_{j+1}^n \wedge k).
    \]

    Indeed, on the one hand, by convexity,
    \[
        \begin{aligned}
            \abs{u_j^{n+1} - H_j(k, k, k)} 
            & = \abs{u_j^{n+1} - k + \lambda (\fint^{j+1/2}(k, k) - 
            \fint^{j-1/2}(k, k))} \\
            & \geq \abs{u_j^{n+1} - k} + \lambda \sgn(u_j^{n+1} - k) 
            (\fint^{j+1/2}(k, k) - \fint^{j-1/2}(k, k)).
        \end{aligned}
    \]  

    On the other hand, by monotonicity of the scheme,
    \[
        \begin{aligned}
        \abs{u_j^{n+1} - H_j(k, k, k)}
        & \leq H_j(u_{j-1}^n \vee k, u_j^n \vee k, u_{j+1}^n \vee k) 
        - H_j(u_{j-1}^n \wedge k, u_j^n \wedge k, u_{j+1}^n \wedge k) \\
        & = \abs{u_j^n - k} - \lambda (\Phi_{j+1/2}^n - \Phi_{j-1/2}^n).
        \end{aligned}
    \] 

    Inequality \eqref{eq:DEI} follows by combining these two estimates.

    \paragraph{Step 3: Convergence.} Let $\varphi \in \Cc{\infty}(\R^+\times \R, \R^+)$ 
    and fix $T > 0$, $R$ such that the support of $\varphi$ is included in 
    $[0, T] \times [-R, R]$. Define for all $n \in \N$ and $j \in \Z$, 
    $\varphi_j^n \coloneq \frac{1}{\Delta x} \int_{I_j} \varphi(t^n, x) \d{x}$
    
    With reference to Lemma \ref{lmm:ConsistencyInterfaceFlux}, set 
    $\mu \coloneq \sup_{k \in [\underline{u}, \overline{u}]} \mu_k$ and fix 
    $k \in [\underline{u}, \overline{u}]$.

    Multiply \eqref{eq:DEI} by $\varphi_j^n$ and take the double sum for $n \in \N$ and 
    $j \in \Z$. A summation by parts provides $A + B + C \geq 0$ with
    \[
        \begin{aligned}
            A & \coloneq \sum_{n=1}^{+\infty} \sum_{j \in \Z} 
            \abs{u_j^{n}-k} (\varphi_j^{n} - \varphi_j^{n-1}) \Delta x 
            + \sum_{j \in \Z} \abs{u_j^{o}-k} \varphi_j^{o} \Delta x \\
            B & \coloneq \sum_{n=0}^{+\infty} \sum_{j \in \Z} 
            \Phi_{j+1/2}^n (\varphi_{j+1}^{n} - \varphi_j^n) \Delta t \\
            C & \coloneq -\sum_{n=0}^{+\infty} \sum_{j \in \Z} 
            \sgn(u_j^{n+1} - k) (\fint^{j+1/2}(k, k) - 
            \fint^{j-1/2}(k, k)) \varphi_j^n \Delta t.
        \end{aligned}
    \]

    Clearly,
    \[
        A
        \limit{\Delta}{0} 
        \int_{0}^{+\infty} \int_{\R} \abs{u-k} \p_t \varphi \; \d{x}\d{t} 
        + \int_{\R} \abs{u_o(x)-k} \varphi(0, x) \d{x}.
    \]

    Consider now $B$. For all $n \in \N$ and $j \in \Z$, write 
    \[
        \begin{aligned}
            \Phi_{j+1/2}^n
            & = \underbrace{\fint^{j+1/2}(u_j^n \vee k, u_{j+1}^n \vee k) 
            - \fint^{j+1/2}(u_j^n \vee k, u_{j}^n \vee k)}_{B_1} \\
            & + \underbrace{\fint^{j+1/2}(u_j^n \vee k, u_{j}^n \vee k) 
            - H(x_{j+1}, u_j^n \vee k)}_{B_{2}} + 
            \underbrace{\Phi(x_{j+1}, u_j^n, k) - \Phi(x_{j}, u_j^n, k)}_{B_3}
            + \Phi(x_{j}, u_j^n, k) \\
            & + \underbrace{H(x_{j+1}, u_j^n \wedge k) - 
            \fint^{j+1/2}(u_j^n \wedge k, u_{j}^n \wedge k)}_{B_4} 
            + \underbrace{\fint^{j+1/2}(u_j^n \wedge k, u_{j}^n \wedge k) 
            - \fint^{j+1/2}(u_j^n \wedge k, u_{j+1}^n \wedge k)}_{B_5}.
        \end{aligned}
    \]

    We see that
    \[
        \begin{aligned}
        \abs{\sum_{n=0}^{+\infty} \sum_{j \in \Z} (B_1 + B_5) 
        (\varphi_{j+1}^{n} - \varphi_j^n) \Delta t} & \leq  
        2 L \bigl( T \norm{\p_{xx}^2 \varphi}_{\L{\infty}(\R^+, \L{1}(\R))} 
        + \norm{\p_{tx}^2 \varphi}_{\L{1}(\R^+ \times \R)} \bigr) (\Delta x + \Delta t) \\
        & + 2 L 
        \int_{0}^{+\infty} \int_{\R} \abs{\rho_\Delta(t, x+\Delta x) - \rho_\Delta(t, x)} 
        \cdot \abs{\p_x \varphi(t, x)} \d{x} \d{t}.
        \end{aligned}
    \]

    Since $(\rho_\Delta)_\Delta$ converges a.e. on $\open{0}{+\infty} \times \R$, 
    $(\rho_\Delta \p_x \varphi)_\Delta$ is strongly compact in 
    $\L{1}(\open{0}{+\infty} \times \R, \R)$. As a consequence of the 
    Riesz-Fréchet-Kolmogorov compactness characterization, see 
    \cite[Chapter 4, Section 5]{BrezisBook}, 
    \[
        \int_{0}^{+\infty}\int_{\R} \abs{\rho_\Delta(t, x+\Delta x)-\rho_\Delta(t, x)} 
        \cdot \abs{\p_x \varphi(t, x)} \d{x} \d{t} \limit{\Delta}{0} 0.
    \]

    On the other hand, using Lemma \ref{lmm:ConsistencyInterfaceFlux}, we can estimate 
    $B_2+B_4$ as:
    \[ 
        \abs{\sum_{n=0}^{+\infty} \sum_{j \in \Z} (B_2 + B_4) 
        (\varphi_{j+1}^n - \varphi_j^n) \Delta t}
        \leq 2 \mu T \cdot \norm{\p_{x} \varphi}_{\L{\infty}(\R^+, \L{1}(\R))} \Delta x.
    \] 

    Regarding $B_3$, it is straightforward that 
    \[
        \abs{\sum_{n=0}^{+\infty} \sum_{j \in \Z} B_3
        (\varphi_{j+1}^n - \varphi_j^n) \Delta t} 
        \leq T \; \sup_{\underset{p, k \in [\underline{u}, \overline{u}]}{x \in \R}} 
        \abs{\p_x \Phi(x, p, k)} 
        \cdot \norm{\p_{x} \varphi}_{\L{\infty}(\R^+, \L{1}(\R))} \Delta x.
    \]

    We now handle the last term in $B$. We have:
    \[
        \begin{aligned}
            & \sum_{n=0}^{+\infty} \sum_{j \in \Z} \Phi(x_{j}, u_j^n, k) (\varphi_{j+1}^n - \varphi_j^n) \Delta t \\
            & = \Delta t \sum_{n=0}^{+\infty} \sum_{j \in \Z} \int_{I_j}
            \Phi(x_j, u_j^n, k) 
            \left( \frac{\varphi(t^n, x+ \Delta x) - \varphi(t^n, x)}{\Delta x} - 
            \p_x \varphi(t^n, x) \right) \d{x} \\
            & + \sum_{n=0}^{+\infty} \sum_{j \in \Z} \int_{t^n}^{t^{n+1}} \int_{I_j} \Phi(x_j, u_j^n, k) (\p_x \varphi(t^n, x) - \p_x \varphi(t, x)) \d{x} \d{t} \\
            & + \sum_{n=0}^{+\infty} \sum_{j \in \Z} \int_{t^n}^{t^{n+1}} \int_{I_j} (\Phi(x_j, u_j^n, k) - \Phi(x, u_j^n, k)) \; 
            \p_x \varphi(t, x) \d{x} \d{t}
            + \int_{0}^{+\infty} \int_{\R} \Phi(x, \rho_\Delta, k) \p_x \varphi(t, x) \d{x} \d{t} \\
            & \leq T \sup_{\underset{\underline{u} \leq p \leq \overline{u}}{x \in \R}} \abs{\Phi(x, p, k)} \cdot \norm{\p_{tx}^2 \varphi}_{\L{\infty}(\R^+, \L{1}(\R))} \Delta x + 
            \sup_{\underset{\underline{u} \leq p \leq \overline{u}}{x \in \R}} 
            \abs{\Phi(x, p, k)} \cdot 
            \norm{\p_{tx}^2 \varphi}_{\L{1}(\R^+ \times \R)} \Delta t \\
            & + \sup_{\underset{\underline{u} \leq p \leq \overline{u}}{x \in \R}} |\p_x \Phi(x, p, k)| \cdot \norm{\p_{x}\varphi}_{\L{1}(\R^+ \times \R)} \Delta x 
            + \int_{0}^{+\infty} \int_{\R} 
            \Phi(x, \rho_\Delta, k) \p_x \varphi(t, x) \d{x} \d{t},
        \end{aligned}
    \]
    
    ensuring that 
    \[
        B \limit{\Delta}{0} 
        \int_{0}^{+\infty} \int_{\R} 
        \Phi(x, u(t, x), k) \p_x \varphi(t, x) \; \d{x} \d{t}.
    \]

    Finally, introduce $F_\Delta$ the continuous, piecewise linear affine function 
    such that $F_\Delta(x_{j+1/2}) = \fint^{j+1/2}(k, k)$ for all $j \in \Z$. Let us 
    recall that because of \eqref{eq:LocalizedSpace}, $F_\Delta$ is constant outside a 
    compact subset.    
    
    As a consequence of Lemma \ref{lmm:ConsistencyInterfaceFlux}, $(F_\Delta)_\Delta$ 
    converges uniformly to $x \mapsto H(x, k)$ on $\R$. Moreover, since 
    $(F_\Delta)_\Delta$ is bounded in $\Lip(\R, \R)$, $(F_\Delta')_\Delta$ converges 
    weakly* in $\L{\infty}(\R, \R)$, and the limit can only be $x\mapsto \p_x H(x, k)$. 
    We can now conclude with $C$. Write $C = C_1 + C_2$, where 
    \[
        \begin{aligned}
            C_1 & \coloneq 
            -\sum_{n=1}^{+\infty} \sum_{j \in \Z} 
            \sgn(u_j^{n} - k) (\fint^{j+1/2}(k, k) - 
            \fint^{j-1/2}(k, k)) (\varphi_j^{n-1} - \varphi_j^{n}) \Delta t \\
            C_2 & \coloneq 
            -\sum_{n=1}^{+\infty} \sum_{j \in \Z} \int_{I_j}
            \sgn(u_j^{n} - k) F_\Delta' (x) \varphi_j^n \Delta t.
        \end{aligned}
    \]

    On the one hand, using Lemma \ref{lmm:ConsistencyInterfaceFlux}, we have 
    \[
        \begin{aligned}
            \abs{C_1} 
            \leq 2 T R \left(2 \mu + \sup_{x \in \R} \abs{\p_x H(x, k)} \right) 
            \norm{\p_t \varphi}_{\L{\infty}(\R^+ \times \R)} \Delta t.
        \end{aligned}
    \]

    On the other hand, we take advantage of Lemma \ref{lmm:SignTechnical} to obtain that
    \[
        \lim_{\Delta \to 0} C_2 = - \int_{0}^{+\infty} \int_{\R} \sgn(u -k) \;
        \p_x H(x, k) \varphi(t, x) \; \d{x} \d{t},
    \]

    for any $k \in [\underline{u},\overline{u}]\backslash \cL$. Then, repeat the 
    argument from \cite{KRT2004} to extend it for all 
    $k\in[\underline{u},\overline{u}]$.

    By passing to the limit $\Delta \to 0$ in $A + B + C \geq 0$, we proved that 
    \eqref{eq:ES} holds, concluding the proof that $u$ is the 
    entropy solution to \eqref{eq:CL}.
\end{proofof}

\begin{thx}
    The author thanks Nicolas Seguin for enlightening discussions.

    The author acknowledges the PRIN 2022 project \emph{Modeling, Control and
    Games through Partial Differential Equations} (D53D23005620006),
    funded by the European Union - Next Generation EU.
\end{thx}


{\small
  \bibliography{heterogeneous_fvs}
  \bibliographystyle{abbrv}
}

\end{document}